\newcommand{\bbQ}{{\mathbb Q}}
\newcommand{\bbC}{{\mathbb C}}
\newcommand{\bbR}{{\mathbb R}}
\newcommand{\bbZ}{{\mathbb Z}}
\newcommand{\Ind}{{\mathrm{Ind}}}
\newcommand{\frakg}{{\mathfrak{g}}}
\newcommand{\frakp}{{\mathfrak{p}}}
\newcommand{\frakt}{{\mathfrak{t}}}
\newcommand{\Hom}{{\mathrm{Hom}}}
\newtheorem{lemma}{Lemma}[section]
\newtheorem{prop}[lemma]{Proposition}
\newtheorem{thm}[lemma]{Theorem}
\newtheorem{cor}[lemma]{Corollary}
\theoremstyle{definition}
\newtheorem{definition}{Definition}[section]
\newtheorem{example}{Example}[section]
\theoremstyle{remark}
\newtheorem{remark}{Remark}
\begin{document}


\title{principal series representations of metaplectic groups}
\author{Shiang Tang}
\address{Shiang Tang, Department of Mathematics, University of Utah,
  Salt Lake City, UT 84112} \email{tang@math.utah.edu}

\begin{abstract}
We study the principal series representations of central extensions of a split reductive algebraic group by a cyclic group of order $n$. We compute the Plancherel measure of the representation using Eisenstein series and a comparison method. In addition, we construct genuine central characters of the metaplectic torus in the simply-laced case.
\end{abstract}

\maketitle

\section{Introduction}
Let $F$ be a non-archimedean local field with ring of integers $\mathcal O_F$ and assume that $\underline G$ is a split reductive group defined over $F$. Let $n$ be a positive integer. A central extension of $\underline G$ by $\mu_n$ is a topological group $G$ that fits in the following exact sequence:
$$1\to \mu_n \to G \to \underline G \to 1$$
The aim of this paper is to study the principal series representations of $G$. Such representations have been studied by several authors. Kazhdan and Patterson \cite{KP84} have studied the case of $GL_n$, in which they have carried out the local theory and global theory, in particular, the theory of Eisenstein series in great detail. Savin \cite{Sav04} has considered the case when $G$ is simply laced and simply connected. The double cover of a general simply connected reductive group has been studied by Loke and Savin \cite{LS10} in which they construct genuine automorphic representations of the metaplectic torus using theta series.  
McNamara \cite{Mc12} has generalized the theory to the case of an arbitrary reductive group, under the hypothesis that $2n$ is coprime to the residual characteristic of $F$ and $F^{\times}$ contains $2n$ distinct $2n$-th roots of unity. In another direction, Weissman \cite{Weis16} has studied extensively the structure of the metaplectic torus, both in the local and global cases, and their representations. This paper studies the same objects the above authors have considered, with the following new results and constructions:
\begin{enumerate}
\item We remove the tameness assumption on $n$ imposed by McNamara and other authors. 
\item We use a comparison method to compute the Plancherel measures of ramified representations of $G(F)$. More precisely, we compare the covering group with a well-chosen linear group (motivated by \cite{Weis16}), and transfer the Plancherel measure from the linear case to the non-linear case. This has been done for $G$ the $n$-fold cover of $\mathrm{SL}_2(F)$ by Goldberg and Szpruch in \cite{GS16} using a different approach.
\item We give explicit constructions of genuine Weyl group invariant central automorphic character of the covering torus (in the sense of \cite{Weis16}, Definition 4.7). 
\end{enumerate}

The organization of this paper is as follows. In Section 2, we review the local theory, most of the results are known for a long time. In Section 3, we review the global theory, in particular, we obtain a product formula of local Plancherel measures which will be used in the comparison method mentioned above. In Section 4, we carry out the comparison method and prove our main theorem on the relation between Plancherel measures of non-linear groups and their associated linear groups, under the assumption on the existence of genuine Weyl group invariant central automorphic characters of the metaplectic torus. In Section 5, we construct explicitly such characters for simply-laced groups, and hence prove their existence.

I would like to thank my advisor Gordan Savin, without whom none of this work would be possible.

\section{central extensions of split reductive groups over local fields}

\subsection{Group theory}
Let $p$ be a fixed prime and $n$ be a fixed integer. Let $F$ be a finite extension of $\mathbb Q_p$ with ring of integers $R$ and maximal ideal $\mathfrak{m}$ generated by a uniformizer $\varpi$. We assume that $F$ contains the group $\mu_n$ of $n$-th roots of unity. Let $\underline G$ be a split reductive group over $F$ with Lie algebra $\mathfrak g$. Let $\underline T$ be a fixed maximal split torus of $\underline G$ with Lie algebra $\mathfrak t$. Let $\Phi=\Phi(\underline G,\underline T)$ be the root system associated to $\underline T$ and let $\Phi^{\vee}$ the set of coroots. We fix a Borel subgroup $\underline B$ containing $\underline T$ with the corresponding set $\Phi^+$ of positive roots and let $\Delta \subset \Phi^+$ be the subset of simple roots. Let $X$ be the character lattice and $Y$ be the cocharacter lattice of $\underline T$. We have $\Phi \subset X$ and $\Phi^{\vee} \subset Y$. The quadruple 
$$(X,\Phi,Y,\Phi^{\vee})$$ is called the root datum of $\underline G$, which determines $\underline G$ up to isomorphism.        
Let $H_{\beta}$ ($\beta\in\Delta$), $X_{\alpha}$ ($\alpha\in\Phi$) be a Chevalley basis of $\mathfrak g^{der}=[\mathfrak g,\mathfrak g]$. Let $\underline G^{sc}$ be the simply connected Chevalley group with Lie algebra $\mathfrak g^{der}$. There exists a map
$$\underline G^{sc} \to\underline  G$$ 
which differentiates to the inclusion map $\mathfrak g^{der} \to \mathfrak g$ on the level of Lie algebras. The elements $\underline x_{\alpha}(t)$, $\underline w_{\alpha}(t)$, $\underline h_{\alpha}(t)$ in $\underline G^{sc}$ are defined as in \cite{St67}. We use the same symbols to denote their images in $\underline G$. However, $\underline x_{\alpha}(t)$ for $t\in F, \alpha \in \Phi$ no longer generate $\underline G$. Let $\underline K$ be a maximal subgroup of $\underline G$ with the property that the adjoint action of $\underline K$ preserves the $R$-lattice in $\mathfrak g^{der}$ spanned by the Chevalley basis. Then $\underline K$ is a maximal compact subgroup of $\underline G$. In particular, when $\underline G$ is simply-connected, $\underline K$ is the subgroup of $\underline G$ generated by $\underline x_{\alpha}(t)$ for $t\in R$, $\alpha\in\Phi$.

We may identify $\underline T$ with $Y \otimes F^{\times}$. Let $(,):Y\times Y \to \mathbb Z$ be an integer valued symmetric bilinear form on $Y$ that is Weyl group invariant and satisfies $(\alpha^{\vee},\alpha^{\vee})=2$ when $\alpha$ is long.
Let $G$ be a nontrivial central extension of $\underline G$ by the group of $n$-th roots of unity:
$$1\rightarrow \mu_n \rightarrow G \to \underline G \rightarrow 1$$ Let $T$ be the preimage of $\underline T$ in $G$. We assume the followings:

\begin{enumerate}
\item $$[y_1\otimes t_1,y_2\otimes t_2]=(t_1,t_2)_n^{(y_1,y_2)}$$ where $y_i\otimes t_i$ represents an element in the fibre of $y_i\otimes t_i \in \underline T$ by abuse of notation and $(,)_n$ is the Hilbert norm residue symbol in class field theory.

\item If $p$ does not divide $n$, then there is a splitting (not necessarily unique) section $s:\underline K \to G$. The image of $s$ is denoted by $K$.
\end{enumerate}

\begin{remark}
Item 1 implies that given $t,t'\in T$, their commutator $[t,t'] \in \mu_n$ only depends on $\pi(t),\pi(t') \in \underline T$.
\end{remark}

\begin{remark}
Item 2 implies that on $\underline K_0$, the subgroup of $\underline K$ generated by $\underline x_{\alpha}(t)$ with $t\in R$ and $\alpha\in\Phi$, $s$ is the same as the canonical section which sends $\underline x_{\alpha}(t)$ to its unique lift in $G$.
\end{remark}

\begin{remark}
For any $\alpha \in \Phi$, $t \in F$,
$\underline x_{\alpha}(t)\in \underline G$ lifts uniquely to $G$, and we use $x_{\alpha}(t)$ to denote the lift. Define
\begin{align*}
&w_{\alpha}(t):=x_{\alpha}(t)x_{-\alpha}(-t^{-1})x_{\alpha}(t)\\
&h_{\alpha}(t):=w_{\alpha}(t)w_{\alpha}(-1)\\
\end{align*}
By \cite{St67}, when $\Phi$ is simply laced, we have the following relations in $T$: 
$$h_{\alpha}(t)h_{\alpha}(u)=(t,u)_nh_{\alpha}(tu)$$
$$[h_{\alpha}(t),h_{\beta}(u)]=(t,u)_n^{(\alpha^{\vee},\beta^{\vee})}$$

However, the $h_{\alpha}$'s do not generate $T$ unless $[G,G]=G$. 
\end{remark}

\subsection{Unramified principal series}
In this section, we keep the assumptions made in Section 2.1. Suppose we have an irreducible genuine representation $\sigma$ of the metaplectic torus $T$ which is Weyl group invariant, i.e., its central character is fixed by the conjugation action of the Weyl group. By \cite{AB07}, Proposition 2.2, there is a bijection between irreducible genuine representations of $T$ and genuine central characters of its center $Z_T$. We say that $\sigma$ is \emph{unramified} if $\sigma^{T_0}\neq 0$ where $T_0:=T\cap K$. 

\begin{prop}\label{2.1}
$Z_T\cdot T_0$ is a maximal abelian subgroup of $T$ containing $T_0$. 
\end{prop}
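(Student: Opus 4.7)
The plan is to identify $Z_T\cdot T_0$ with the centralizer $C_T(T_0)$ of $T_0$ in $T$; once this equality is established, $Z_T\cdot T_0$ is automatically the maximal abelian subgroup of $T$ containing $T_0$.

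First I would handle the easy containments. Since $s:\underline K\to G$ is a group homomorphism, $T_0=s(\underline K\cap\underline T)$ is the image of an abelian group and is therefore abelian; combined with the fact that $Z_T$ is central in $T$, this shows $Z_T\cdot T_0$ is abelian, and the inclusion $T_0\subseteq Z_T\cdot T_0$ is immediate. Any abelian subgroup of $T$ containing $T_0$ must lie in the centralizer $C_T(T_0)$, so the proposition reduces to the equality $C_T(T_0)=Z_T\cdot T_0$.

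The heart of the argument is the computation of $C_T(T_0)$. By Remark 1 the commutator in $T$ descends to a bimultiplicative pairing $c:\underline T\times\underline T\to\mu_n$ given on simple tensors by $c(y_1\otimes t_1,y_2\otimes t_2)=(t_1,t_2)_n^{(y_1,y_2)}$, so $C_T(T_0)=\pi^{-1}(N)$ with $N=\{\bar t\in\underline T : c(\bar t,\underline T_0)=1\}$. Using the decomposition $F^{\times}=\varpi^{\mathbb Z}\times R^{\times}$, I would write an arbitrary element of $\underline T=Y\otimes F^{\times}$ as $(\lambda\otimes\varpi)\cdot u_0$ with $\lambda\in Y$ and $u_0\in\underline T_0$. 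Because $p\nmid n$, the tame Hilbert symbol satisfies $(R^{\times},R^{\times})_n=1$, so $u_0$ pairs trivially against $\underline T_0$ and the centralizer condition collapses to
$$(\varpi,u)_n^{(\lambda,y)}=1\qquad\text{for all }y\in Y,\ u\in R^{\times}.$$
Since $u\mapsto(\varpi,u)_n$ maps $R^{\times}$ onto $\mu_n$ in the tame case, this forces $(\lambda,y)\equiv 0\pmod n$ for every $y\in Y$, which in turn makes $\lambda\otimes\varpi$ pair trivially with all of $\underline T$. Hence $N=Z(\underline T)\cdot\underline T_0$, and taking preimages in $T$ (using $\mu_n\subseteq Z_T$) yields $C_T(T_0)=Z_T\cdot T_0$.

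The only real technical input is this centralizer computation, which rests on the tame Hilbert symbol identities $(R^{\times},R^{\times})_n=1$ and the surjectivity of $(\varpi,\cdot)_n:R^{\times}\to\mu_n$. These are precisely where the standing hypothesis $p\nmid n$ (implicit in the definition of $T_0$ via the splitting $s$) is used; the remainder of the argument consists of unwinding the definitions of the central extension.
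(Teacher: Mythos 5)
Your proposal is correct and follows essentially the same route as the paper: both reduce the statement to showing that the centralizer of $T_0$ in $T$ equals $Z_T\cdot T_0$, represent elements of $T$ modulo $T_0$ by lifts of $\lambda\otimes\varpi$, and use the tame Hilbert symbol identity (valid since $p\nmid n$, implicit in the existence of $K$) to deduce that the centralizing condition forces $(\lambda,y)\equiv 0\pmod n$ for all $y\in Y$, hence membership in $Z_T$. You spell out a few steps the paper leaves implicit (the decomposition $F^\times=\varpi^{\mathbb Z}\times R^\times$, the surjectivity of $u\mapsto(\varpi,u)_n$ onto $\mu_n$), but the core argument is identical.
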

\begin{proof}
Clearly $Z_T\cdot T_0$ is an abelian group. We need to show that any element in $Z_T(T_0)$ must belong to $Z_T\cdot T_0$. Observe that every element in $T/T_0$ is represented by an element in $T$ which is of the form $x\otimes \varpi$ for some $x\in Y$. Therefore it is enough to show that if $x\otimes \varpi$ centralizes $T_0$, then it must lie in $Z_T$. To see this, note that 
$[x\otimes \varpi, y\otimes u]=1$ for any $y\in Y$ and any $u \in R^{\times}$, the LHS of which is $(\varpi,u)^{(x,y)}=(\bar u^{-1})^{(x,y)\cdot \frac{q-1}{n}}$ by Item 1. So $n$ divides $(x,y)$, for all $y \in Y$. It follows that $x\otimes \varpi \in Z_T$.  
\end{proof}

\begin{prop}\label{2.2}
If $p$ does not divide $n$ and $\sigma$ is unramified, then $\dim \sigma^{T_0}=1$.
\end{prop}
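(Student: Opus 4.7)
The plan is to realize $\sigma$ as an induced representation and then compute its $T_0$-invariants by a Mackey-theoretic double-coset argument. By \cite{AB07}, Proposition~2.2, the irreducible genuine representation $\sigma$ is isomorphic to $\Ind_A^T \tilde\chi$ for any choice of maximal abelian subgroup $A \subset T$ (containing $Z_T$) and any extension $\tilde\chi$ of the genuine central character $\chi$ of $Z_T$ to $A$; all such choices produce isomorphic representations. I will take $A = Z_T \cdot T_0$, which is maximal abelian by Proposition~\ref{2.1}, and I will arrange the extension $\tilde\chi$ so that $\tilde\chi|_{T_0} = 1$. This is possible because any $z \in Z_T \cap T_0$ acts on a nonzero $T_0$-fixed vector both by $\chi(z)$ and trivially, so the unramifiedness hypothesis $\sigma^{T_0} \neq 0$ forces $\chi|_{Z_T \cap T_0} = 1$, and hence $\tilde\chi(z t_0) := \chi(z)$ is well-defined on $A = Z_T \cdot T_0$.

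Next I apply Mackey's formula to obtain
$$\dim \sigma^{T_0} \;=\; \sum_{g \in A \backslash T / T_0} \dim \bigl(\tilde\chi^{g}\bigr)^{g^{-1} A g \cap T_0},$$
where $\tilde\chi^{g}(x) = \tilde\chi(gxg^{-1})$. The key simplification is that $\mu_n$ is central in $G$: for any $g \in T$ and $t_0 \in T_0$, the commutator $[g,t_0]$ lies in $\mu_n \subset Z_T \subset A$, and consequently $g T_0 g^{-1} \subset T_0 \cdot \mu_n \subset A$. This shows that $g^{-1} A g \cap T_0 = T_0$ for every $g \in T$, and the Mackey sum collapses to a count of the double cosets $AgT_0$ for which $\tilde\chi^{g}|_{T_0} = 1$.

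To finish, write $g t_0 g^{-1} = t_0 \cdot [g,t_0]$ with $[g,t_0] \in \mu_n$. Since $\tilde\chi|_{T_0} = 1$ and $\tilde\chi|_{\mu_n}$ is faithful (by genuineness of $\chi$), the condition $\tilde\chi^{g}(t_0) = 1$ for all $t_0 \in T_0$ becomes $[g,t_0] = 1$ for all $t_0 \in T_0$, i.e., $g \in Z_T(T_0)$. Proposition~\ref{2.1} identifies this centralizer with $A$ itself, so the only contributing double coset is the trivial one, contributing exactly $1$. Therefore $\dim \sigma^{T_0} = 1$.

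No individual step is difficult once Proposition~\ref{2.1} is in hand; the main conceptual content is the observation that centrality of $\mu_n$ in $G$ forces every conjugate $g T_0 g^{-1}$ to land back inside $A$, which is exactly what makes the Mackey decomposition collapse to a single double coset. The tameness hypothesis $p \nmid n$ enters only implicitly, through the existence of the splitting $s: \underline K \to G$ used to define $T_0$ in the first place.
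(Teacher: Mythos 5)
Your proof is correct and follows the same Mackey-theoretic line as the paper's own argument: both realize $\sigma$ as $\Ind_{Z_T T_0}^T\tilde\chi$ with $\tilde\chi|_{T_0}=1$, and both reduce the count to the fact -- the \emph{Claim} in the paper's proof, supplied by Proposition~\ref{2.1} -- that no $g\in T\setminus Z_T T_0$ fixes $\tilde\chi$ on $T_0$. The only repackaging is that you cite \cite{AB07} outright for the induced model and then compute $T_0$-invariants by the Mackey restriction formula (with the tidy remark that $g^{-1}Ag\cap T_0=T_0$ for every $g$, which keeps the double-coset sum honest), whereas the paper constructs a nonzero intertwiner into $\sigma$ by Frobenius reciprocity, proves irreducibility of the induction by Mackey's criterion, and reads off multiplicity one of $\tilde\chi$ from that same criterion; your route asks a bit more of \cite{AB07} than the paper's citation makes explicit (namely the Stone--von Neumann realization of $\sigma$ as $\Ind_A^T\tilde\chi$, independent of the choice of $A$ and of the extension), but this is standard, and the mathematical content is the same.
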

\begin{proof}
Let $\chi$ be the central character of $\sigma$.
Observe that $\chi$ is trivial on $Z_T\cap K$. In fact, choose $0\neq v_0\in \sigma^{T_0}$, we have for any $z\in T_0$, $v_0=\sigma(z)v_0=\chi(z)v_0$ which forces $\chi(z)$ to be $1$. Let $\tilde{\chi}$ be the extension of $\chi$ to $Z_T\cdot T_0$ by letting $\chi$ act trivially on $T_0$. Form the induction $$Ind_{Z_TT_0}^T(\tilde{\chi})$$ which has central character $\chi$. Frobenius reciprocity gives
$$Hom_T(Ind_{Z_TT_0}^T(\tilde{\chi}),\sigma)=Hom_{Z_TT_0}(\tilde{\chi},\sigma|_{Z_TT_0})$$ Since $\sigma$ is unramified, there is a line $L \subset \sigma$ that is fixed by $T_0$, so $\tilde{\chi}$ is a summand of $\sigma|_{Z_TT_0}$ and hence there is a nonzero $T$-intertwining map from $Ind_{Z_TT_0}^T(\tilde{\chi})$ to $\sigma$. 

Claim: For any $s\in T-Z_T\cdot T_0$, $\tilde{\chi}^s \neq \tilde{\chi}$ on $T_0$.

Once we have proved this claim, it will follow by Mackey's criterion that $Ind_{Z_TT_0}^T(\tilde{\chi})$ is irreducible and isomorphic to $\sigma$, and that $\tilde{\chi}$ has multiplicity one in $Ind_{Z_TT_0}^T(\tilde{\chi})|_{Z_TT_0}$, from which the proposition follows.

To prove the claim, we may assume that $s=x\otimes \varpi$ for some $x \in Y$. If the claim were false, then $\tilde{\chi}([s,t])=1$ for all $t \in T_0$, that is, $[x\otimes \varpi, y\otimes u]=1$ for any $y\in Y$ and $u \in R^{\times}$. By the proof of the previous proposition, this would imply $s \in Z_T$, a contradiction. 
\end{proof}

\begin{lemma}\label{2.3}
Suppose $\alpha, \beta$ are roots in $\Phi$ such that $<\beta,\alpha>=-1$ (which implies that $|\alpha|\geq |\beta|$ and $w_{\alpha}\beta=\alpha+\beta$), then 
$$h_{\alpha+\beta}(t)=(c(\alpha,\beta)t^{<\alpha,\beta>},t^{n_{\beta}})h_{\alpha}(t^{-<\alpha,\beta>})h_{\beta}(t)$$ where $n_{\beta}$ is 1 when $\beta$ is long and is the maximal number of links between adjacent dots in the Dynkin diagram of $\Phi$ when $\beta$ is short.
\end{lemma}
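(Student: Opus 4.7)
First, I would establish the underlying coroot identity in $Y$. The hypothesis $\langle\beta,\alpha\rangle = -1$ gives $2(\alpha,\beta) = -(\alpha,\alpha)$, hence $|\alpha+\beta|^2 = |\beta|^2$, and consequently
\[
(\alpha+\beta)^\vee \;=\; \frac{2(\alpha+\beta)}{(\beta,\beta)} \;=\; \frac{(\alpha,\alpha)}{(\beta,\beta)}\,\alpha^\vee + \beta^\vee \;=\; -\langle\alpha,\beta\rangle\,\alpha^\vee + \beta^\vee.
\]
Projecting to $\underline T$, this reads $\underline h_{\alpha+\beta}(t) = \underline h_\alpha(t^{-\langle\alpha,\beta\rangle})\,\underline h_\beta(t)$. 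Hence the two sides of the target identity in $G$ have the same image in $\underline T$ and differ by a $\mu_n$-valued factor $\zeta(t)$; the whole task is to show $\zeta(t) = (c(\alpha,\beta)\,t^{\langle\alpha,\beta\rangle}, t^{n_\beta})_n$.

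Next, I would extract $\zeta(t)$ by Weyl conjugation. Since $s_\alpha\beta = \alpha+\beta$, Steinberg's relation in $\underline G^{sc}$, lifted canonically to $G$, gives $w_\alpha(1)\,w_\beta(u)\,w_\alpha(1)^{-1} = w_{\alpha+\beta}(c(\alpha,\beta)\,u)$ for the sign $c(\alpha,\beta) \in \{\pm 1\}$. Combined with $h_\gamma(t) = w_\gamma(t)w_\gamma(-1)$ and the identity $w_\gamma(a)w_\gamma(-b) = h_\gamma(a)h_\gamma(b)^{-1}$, this yields
\[
h_{\alpha+\beta}(t) \;=\; w_\alpha(1)\bigl[\,h_\beta(c(\alpha,\beta)\,t)\,h_\beta(c(\alpha,\beta))^{-1}\,\bigr]\,w_\alpha(1)^{-1}.
\]
The rank-one $\mathrm{SL}_2$-level formula $h_\beta(a)h_\beta(b) = (a,b)_n^{n_\beta}\,h_\beta(ab)$, with exponent $n_\beta = (\beta^\vee,\beta^\vee)/2$ reflecting the coroot length, simplifies the bracket to $(c(\alpha,\beta),t)_n^{n_\beta}\,h_\beta(t)$, and therefore
\[
h_{\alpha+\beta}(t) \;=\; (c(\alpha,\beta),t)_n^{n_\beta}\cdot w_\alpha(1)\,h_\beta(t)\,w_\alpha(1)^{-1}.
\]

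Finally, I would compare $w_\alpha(1)\,h_\beta(t)\,w_\alpha(1)^{-1}$ with the ``product'' lift $h_\alpha(t^{-\langle\alpha,\beta\rangle})\,h_\beta(t)$. Both are lifts of $s_\alpha\beta^\vee\otimes t = (-\langle\alpha,\beta\rangle\,\alpha^\vee + \beta^\vee)\otimes t$ in $\underline T$, and hence differ by a cocycle in $\mu_n$ that is computable from the bilinear commutator formula of Item 1 in Section 2.1. Using $(\alpha^\vee,\beta^\vee) = -n_\beta$, which follows from the normalization $(\gamma^\vee,\gamma^\vee) = 2n_\gamma$ together with the hypothesis $\langle\beta,\alpha\rangle = -1$, and tracking the ordering implicit in Weyl conjugation, this discrepancy evaluates to $(t,t)_n^{\langle\alpha,\beta\rangle\,n_\beta}$. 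Assembling with the factor from the previous step and applying bilinearity of the Hilbert symbol,
\[
\zeta(t) \;=\; (c(\alpha,\beta),t)_n^{n_\beta}\,(t,t)_n^{\langle\alpha,\beta\rangle\,n_\beta} \;=\; \bigl(c(\alpha,\beta)\,t^{\langle\alpha,\beta\rangle},\,t^{n_\beta}\bigr)_n.
\]
The hard part will be the short-root case ($\beta$ short, rank-two subsystem $B_2$, $C_2$, or $G_2$), where $n_\beta$ appears both in the rank-one $h_\beta$-relation and in $(\alpha^\vee,\beta^\vee)$, and where the Steinberg commutator $[x_\alpha(u),x_\beta(v)] = \prod_{i,j\geq 1} x_{i\alpha+j\beta}(\cdots)$ involves higher-order terms. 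Verifying that the signs from the Chevalley basis and all cocycle contributions conspire to produce the single clean symbol of the lemma is the technical crux.
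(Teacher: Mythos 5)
Your approach is correct and reaches the same answer, but by a genuinely different route. The paper conjugates $h_\beta(t)$ by $w_\alpha(u)$ with $u=t^{-\langle\alpha,\beta\rangle}$: it expands $w_\alpha(-u)$ into root‐group elements, commutes them past $h_\beta(t)$ using $h_\beta(t)x_{\pm\alpha}(s)h_\beta(t)^{-1}=x_{\pm\alpha}(t^{\pm\langle\alpha,\beta\rangle}s)$, and regroups to get $w_\alpha(u)h_\beta(t)w_\alpha(-u)=h_\alpha(u)h_\beta(t)$ \emph{exactly} (no $\mu_n$ factor); the entire cocycle $\zeta$ then sits in the gap between $h_{w_\alpha\beta}(t)$ and $w_\alpha(u)h_\beta(t)w_\alpha(-u)$, which is evaluated directly from Steinberg's formula $w_\alpha(u)h_\beta(t)w_\alpha(-u)=h_{w_\alpha\beta}(cut)h_{w_\alpha\beta}(cu)^{-1}$ together with the rank-one relation. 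You instead conjugate by $w_\alpha(1)$, and split the cocycle into two pieces: the $w_\beta(ct)w_\beta(-c)$ contraction producing $(c,t)^{n_\beta}$, and the $[h_\beta(t),h_\alpha(u)]$ commutator producing $(t,t)^{\langle\alpha,\beta\rangle n_\beta}$ via $(\alpha^\vee,\beta^\vee)=-n_\beta$. Your coroot identity and the identity $(\alpha^\vee,\beta^\vee)=-n_\beta$ are both correct (the latter from $\langle\beta,\alpha^\vee\rangle=2(\beta^\vee,\alpha^\vee)/(\beta^\vee,\beta^\vee)$), and the two contributions multiply to the paper's $(ct^{\langle\alpha,\beta\rangle},t^{n_\beta})_n$. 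The chief advantage of your route is that it isolates the bilinear-form contribution $(\alpha^\vee,\beta^\vee)$ conceptually; the paper's route stays entirely inside Steinberg's calculus and never needs to invoke the form on $Y$.

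One caution: the intermediate identity you invoke, $w_\gamma(a)w_\gamma(-b)=h_\gamma(a)h_\gamma(b)^{-1}$, is not quite correct in the covering group. A careful expansion (using $w_\gamma(v)=h_\gamma(v)w_\gamma(1)$, $w_\gamma(1)h_\gamma(v)w_\gamma(1)^{-1}=h_\gamma(v^{-1})$, and $w_\gamma(1)^2=h_\gamma(-1)^{-1}$) gives
\[
w_\gamma(a)\,w_\gamma(-b)\;=\;(b^{-1},-1)_n^{\,n_\gamma}\;h_\gamma(a)\,h_\gamma(b)^{-1},
\]
so there is a $(b^{-1},-1)_n^{\,n_\gamma}$ correction. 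In your application $b=c(\alpha,\beta)\in\{\pm 1\}$, and when you further apply the $h_\gamma$-cocycle relation to simplify the bracket, a compensating factor of $(c,c)_n^{\,n_\beta}=(c,-1)_n^{\,n_\beta}$ appears on the other side, so the net result $w_\beta(ct)w_\beta(-c)=(c,t)_n^{\,n_\beta}h_\beta(t)$ is still correct; but as written, the reasoning silently drops two factors that happen to cancel. Since these are exactly the kind of $(-1,-1)_n$-type subtleties that make the ``technical crux'' you flag genuinely delicate, it is worth stating the $w$-contraction identity in its corrected form rather than the linear-group version.
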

\begin{proof}
Let $u=t^{-<\alpha,\beta>}$.
\begin{align*}
&\:\:\:\:\: h_{\alpha+\beta}(t)\\
&=h_{w_{\alpha}\beta}(t)\\
&=\zeta w_{\alpha}(u)h_{\beta}(t)w_{\alpha}(-u)\\
&=\zeta w_{\alpha}(u)h_{\beta}(t)x_{\alpha}(-u)x_{-\alpha}(u^{-1})x_{\alpha}(-u)\\
&=\zeta w_{\alpha}(u)x_{\alpha}(-t^{<\alpha,\beta>}u)h_{\beta}(t)x_{-\alpha}(u^{-1})x_{\alpha}(-u)\\
&=\zeta w_{\alpha}(u)x_{\alpha}(-t^{<\alpha,\beta>}u)x_{-\alpha}(t^{-<\alpha,\beta>}u^{-1})h_{\beta}(t)x_{\alpha}(-u)\\
&=\zeta w_{\alpha}(u)x_{\alpha}(-t^{<\alpha,\beta>}u)x_{-\alpha}(t^{-<\alpha,\beta>}u^{-1})x_{\alpha}(-t^{<\alpha,\beta>}u)h_{\beta}(t)\\
&=\zeta w_{\alpha}(u)w_{\alpha}(-t^{<\alpha,\beta>}u)h_{\beta}(t)\\
&=\zeta h_{\alpha}(t^{-<\alpha,\beta>})h_{\beta}(t)\\
\end{align*}
where $\zeta\in \mu_n$.

We now compute $\zeta$:
By \cite{St67}, 
$$w_{\alpha}(u)h_{\beta}(t)w_{\alpha}(-u)=h_{w_{\alpha}\beta}(c(\alpha,\beta)ut)h_{w_{\alpha}\beta}(c(\alpha,\beta)u)^{-1}$$
$$=(c(\alpha,\beta)u^{-1},c(\alpha,\beta)u)^{n_{\beta}}h_{w_{\alpha}\beta}(c(\alpha,\beta)ut)h_{w_{\alpha}\beta}(c(\alpha,\beta)u^{-1})$$
$$=(cu^{-1},cu)^{n_{\beta}}(cut,cu^{-1})^{n_{\beta}}h_{w_{\alpha}\beta}(t)=(cu^{-1},t^{-1})^{n_{\beta}}h_{w_{\alpha}\beta}(t)$$
So $\zeta=(cu^{-1},t)^{n_{\beta}}=(ct^{<\alpha,\beta>},t)^{n_{\beta}}$. 
\end{proof}

\begin{prop}\label{2.4} Suppose that $\Phi$ is not of type $A_1$ and $\alpha\in \Phi$ is long, then 
$\sigma(h_{\alpha}(t))=1$ for any $\alpha\in\Phi$, $t\in (F^{\times})^n$.
\end{prop}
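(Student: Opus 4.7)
The strategy is to apply Lemma 2.3 at the argument $t = s^n$ and exploit the fact that Hilbert symbols involving $n$-th powers are trivial, combined with the Weyl invariance of the central character $\chi$ of $\sigma$.

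The first observation is that for any $\gamma \in \Phi$ and $s \in F^\times$, the element $h_\gamma(s^n)$ is central in $T$: by Item 1 of Section 2.1, $[h_\gamma(s^n), h_\delta(u)] = (s^n, u)_n^{(\gamma^\vee, \delta^\vee)} = (s, u)_n^{n(\gamma^\vee, \delta^\vee)} = 1$ for all $\delta \in \Phi$ and $u \in F^\times$, so $\sigma$ acts on $h_\gamma(s^n)$ by the scalar $\chi(h_\gamma(s^n))$. A second observation is that at $t = s^n$, the correction factor $(c(\alpha, \beta) t^{\langle \alpha, \beta \rangle}, t^{n_\beta})_n$ in Lemma 2.3 is trivial (both arguments are $n$-th powers), and so is the cocycle $(cu^{-1}, t)_n^{n_\beta}$ appearing in the proof of Lemma 2.3. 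Hence Weyl conjugation on $\{h_\gamma(s^n) : \gamma \in \Phi\}$ is literally the permutation action on roots with no $\mu_n$-twist, and $W$-invariance of $\chi$ yields $\chi(h_\gamma(s^n)) = \chi(h_{w\gamma}(s^n))$ for every $\gamma \in \Phi$ and $w \in W$.

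The main step is then to find roots $\alpha, \beta \in \Phi$ with $\langle \beta, \alpha \rangle = -1$ such that $\alpha, \beta, \alpha + \beta$ are all long (equivalently, lie in a single Weyl orbit of long roots). Lemma 2.3 at $t = s^n$ then reduces to $h_{\alpha+\beta}(s^n) = h_\alpha(s^n)\, h_\beta(s^n)$. Applying $\chi$ and invoking the Weyl-orbit equality above, the common value $a := \chi(h_\alpha(s^n))$ satisfies $a = a^2$, forcing $a = 1$. Such a triple exists in any $A_2$ subsystem of long roots, hence in any simply-laced $\Phi$ of rank $\geq 2$ (take any $A_2 \subset \Phi$), as well as in the non-simply-laced types $B_n$ ($n \geq 3$), $F_4, G_2$, whose long-root subsystems $D_n, D_4, A_2$ all contain an $A_2$.

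The main obstacle is the remaining types $B_2$ and $C_n$ ($n \geq 2$), where the long roots form only $A_1 \times A_1$ or $A_1^n$ and no $A_2$ is available among them. For these I would instead use Lemma 2.3 with $\alpha$ long and $\beta$ short (so $\alpha + \beta$ is short and $\langle \alpha, \beta \rangle = -2$), obtaining $h_\alpha(s^{2n}) = h_{\alpha+\beta}(s^n)\, h_\beta(s^n)^{-1}$ and, by Weyl invariance inside the Weyl orbit of short roots, $\chi(h_\alpha(s^{2n})) = 1$. The assignment $f(u) := \chi(h_\alpha(u^n))$ is then a character of $F^\times$ trivial on $(F^\times)^2$ and, by the unramification of $\sigma$ together with the tameness of the Hilbert symbol on $R^\times$, also trivial on $R^\times$; hence $f$ factors through $F^\times / (R^\times (F^\times)^2)$ and is determined by its value at the uniformizer $\varpi$. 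Pinning down $f(\varpi) = 1$ is the delicate final step and would require a further application of Lemma 2.3 or a refined Weyl-invariance relation specific to $B_2$ and $C_n$.
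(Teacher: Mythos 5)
Your argument for the simply-laced case is exactly the paper's: establish $\sigma(h_\alpha(t))=\pm1$ for $t\in(F^\times)^n$ (via $w_\alpha$-invariance and Weyl-orbit equality), then use Lemma~\ref{2.3} at $t=s^n$, where the Hilbert-symbol correction dies, to get $h_{\alpha+\beta}(s^n)=h_\alpha(s^n)h_\beta(s^n)$ inside an $A_2$, forcing $a=a^2=1$. Your preliminary observation that $h_\gamma(s^n)$ is central is implicit in the paper but worth making explicit, since it is what lets one apply $\chi$ at all.

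Where you diverge usefully is in the multi-laced case. The paper dismisses it with the single sentence ``The demonstration for $\Phi$ multi-laced is similar,'' but you have correctly traced through what ``similar'' would actually require: the long-root subsystem must contain an $A_2$. That holds for $B_n$ ($n\ge3$, long roots $D_n$), $F_4$ ($D_4$), and $G_2$ ($A_2$), so the paper's argument does extend there. But for $B_2$ and $C_n$ ($n\ge2$) the long roots form $A_1\times A_1$ or $A_1^n$, so no such triple exists, and the only instance of Lemma~\ref{2.3} involving a long root has $\alpha$ long and $\beta$ short with $\langle\alpha,\beta\rangle=-2$, which, as you note, yields $h_\alpha(s^{2n})=h_{\alpha+\beta}(s^n)\,h_\beta(s^n)^{-1}$ and hence only $\chi(h_\alpha(s^{2n}))=1$ --- equivalent to the $\pm1$ already known. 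Your proposal therefore stops short of a complete proof for $B_2$ and $C_n$, and you are candid about it.

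This is a genuine gap, but it is a gap in the paper too, not one you introduced: the paper's ``similar'' does not describe a working argument for $B_2$ or $C_n$, and I see no other relation in the covering torus that would upgrade $\pm1$ to $+1$ for long roots in these types from Lemma~\ref{2.3} alone. (Your fallback of reducing to the value of an unramified quadratic character at $\varpi$ is the right reformulation of what remains, but you do not pin it down, and it is not clear the paper's hypotheses pin it down either.) Note, however, that the downstream uses of Proposition~\ref{2.4} in the paper --- Proposition~\ref{2.6}, the product formula, and the main comparison theorem --- only ever invoke it for a fixed long root $\alpha$ in a rank-one reduction, and the construction of invariant genuine ACCs in Section~5 is carried out only for $A_1$, the simply-laced simply-connected types, and $GL_n$, none of which are $B_2$ or $C_n$; so the gap, while real, does not undermine the paper's subsequent results. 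It would be worth stating Proposition~\ref{2.4} with the hypothesis that the long-root subsystem has rank $\ge2$ and is not of type $A_1^k$, which is exactly what the proof method delivers.
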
 

\begin{proof}
We have
$$w_{\alpha}(1)h_{\alpha}(t)w_{\alpha}(1)=h_{\alpha}(t^{-1})$$
$$w_{\gamma}(1)h_{\alpha}(t)w_{\gamma}(-1)=h_{w_{\gamma}\alpha}(t)$$ for any $t\in (F^{\times})^n$ and any $\alpha,\gamma\in\Phi$.
Since the central character of $\sigma$ is invariant under the action of $W$, the above identities imply $\sigma(h_{\alpha}(t))=\pm 1$ for any $t\in (F^{\times})^n$ and for all roots $\alpha\in\Phi$ of the same length.

Suppose that $\Phi$ is simply-laced and is not of type $A_1$. We have by \ref{2.3}, $h_{\alpha+\beta}(t)=h_{\alpha}(t)h_{\beta}(t)$ when $<\beta,\alpha>=-1$ and $t\in (F^{\times})^n$. It is clear that such pair of roots exist if $\Phi$ is not of type $A_1$. Pick such pair of roots $\alpha, \beta$, we have $\sigma(h_{\alpha+\beta}(t))=\sigma(h_{\alpha}(t))=\sigma(h_{\beta}(t))=1$. Therefore $\sigma(h_{\alpha}(t))=1$, $t\in (F^{\times})^n$ for all roots $\alpha$.       

The demonstration for $\Phi$ multi-laced is similar. 
\end{proof}

\begin{remark}
\ref{2.4} is true for short roots for $\Phi$ of type $G_2$ and $3$ not dividing $n$, and for $\Phi$ of type $F_4$ and $2$ not dividing $n$.
\end{remark}

\begin{example}
If $\underline G=SL_2$ and $n=1$, any quadratic character of $T$ is Weyl group invariant. So \ref{2.4} fails in this case.
\end{example}

Let $\underline B$ be a Borel subgroup of $\underline G$ corresponding to $\Phi^+$, we have $\underline B=\underline TN$ where $N$ is the unipotent radical of $\underline{B}$ which lifts uniquely to $G$. Let $B=TN$ be the preimage of $B=\underline TN$. Define an induced representation (normalized induction)
$$I(\nu)=Ind_B^G(\nu\otimes\sigma)$$ where $\nu$ is an unramified character of $\underline T$. 
For any $w\in N_K(T)/T_0$, choose a representative $\hat w$ in $N_K(T)$. There is a unique isomorphism $$j_{\hat w}:\sigma^{\hat w} \to \sigma $$ such that $j_{\hat w}$ fixes $\sigma^{T_0}$ pointwise. In other words, $j\circ \sigma(\hat w^{-1}t\hat w)=\sigma(t)\circ j$, $\forall t\in T$ and $j(v_0)=v_0$ for any $v_0 \in \sigma^{T_0}$. 
If $\hat w'$ is a different representative in $N_K(T)$, say $\hat w'=\hat wt_0$ for some $t_0 \in T_0$, then $j_{\hat w'}=j_{\hat w}\circ \sigma(t_0)$.  

Fix an Haar measure on $N$. 
For every $w\in W$, there is an intertwining operator 
$$A_w=A_w(\nu):I(\nu) \to I(\nu^w)$$ defined as follows:

$$A_w(f)(x)=j_{\hat w}\int_{N_w} f(\hat w^{-1}nx)dn $$ where $\hat w \in N_K(T)$ is a representative of $w$.

\begin{lemma}
$A_w$ is well-defined and is independent of the choice of a representative of $w$.
\end{lemma}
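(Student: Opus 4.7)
The plan is to handle both assertions---well-definedness and independence of the representative---by leveraging the two defining properties of the intertwiner $j_{\hat w}$: the intertwining relation $j_{\hat w}\circ\sigma(\hat w^{-1}t\hat w)=\sigma(t)\circ j_{\hat w}$ for $t\in T$, and the transformation rule $j_{\hat w t_0}=j_{\hat w}\circ\sigma(t_0)$ for $t_0\in T_0$ stated in the paragraph preceding the lemma.

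For well-definedness, convergence of $\int_{N_w}f(\hat w^{-1}nx)\,dn$ on a suitable cone in the space of unramified characters $\nu$ is the standard one (Iwasawa decomposition plus local constancy of $f$), with meromorphic continuation elsewhere; the metaplectic cover plays no role here. To verify that $A_w(f)\in I(\nu^w)$, I would check the transformation $A_w(f)(bx)=\delta_B^{1/2}(b)(\nu^w\otimes\sigma)(b)A_w(f)(x)$ for $b=tn_0\in B$ by the usual manipulation: commute $t$ past $n\in N_w$ via the change of variable $n\mapsto tnt^{-1}$ (whose Jacobian combines with $\nu(\hat w^{-1}t\hat w)=\nu^w(t)$ to give $\delta_B^{1/2}(t)\nu^w(t)$), absorb $n_0$ by a translation in $n$, and transport the $\sigma$-twist across using the intertwining relation for $j_{\hat w}$.

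For independence, let $\hat w'=\hat w t_0$ with $t_0\in T_0$. Using $j_{\hat w'}=j_{\hat w}\circ\sigma(t_0)$ we have
\[
A_{\hat w'}(f)(x)=j_{\hat w}\sigma(t_0)\int_{N_w}f(t_0^{-1}\hat w^{-1}nx)\,dn.
\]
Set $t_1:=\hat w t_0^{-1}\hat w^{-1}\in T_0$. The identity $t_0^{-1}\hat w^{-1}=\hat w^{-1}t_1$ rewrites the integrand as $f(\hat w^{-1}t_1nx)$; writing $t_1n=(t_1nt_1^{-1})t_1$ and performing the change of variable $n\mapsto t_1nt_1^{-1}$ (Jacobian $1$, since $t_1\in T_0$ is bounded) converts the integral into $\int_{N_w}f(\hat w^{-1}nt_1x)\,dn$. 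Now applying the $B$-equivariance just established to $t_1\in T_0$---and noting that $\delta_B^{1/2}(t_1)=\nu^w(t_1)=1$---gives $j_{\hat w}\int_{N_w}f(\hat w^{-1}nt_1x)\,dn=\sigma(t_1)A_w(f)(x)$. Combining and using the intertwining relation in the form $j_{\hat w}^{-1}\sigma(t_1)j_{\hat w}=\sigma(\hat w^{-1}t_1\hat w)=\sigma(t_0^{-1})$ produces the cancellation $\sigma(t_0)\sigma(t_0^{-1})=1$ and the desired equality $A_{\hat w'}(f)(x)=A_{\hat w}(f)(x)$.

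The main obstacle is the bookkeeping of $\sigma$-factors: the metaplectic cover forces them to appear at several places, and the argument succeeds precisely because the definitions of $j_{\hat w}$ and of the dependence $\hat w\mapsto\hat w t_0$ are engineered so that $\sigma(t_0)$ and $j_{\hat w}^{-1}\sigma(t_1)j_{\hat w}=\sigma(t_0^{-1})$ exactly cancel. Once this cancellation is recognized, the remaining steps are routine and the compactness of $T_0$ makes all Jacobians trivial.
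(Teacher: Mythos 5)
Your proof is correct, and the well-definedness portion coincides with the paper's calculation. For the independence portion, however, you take a genuinely different (and somewhat longer) route than the paper. The paper's argument, while terse, is the direct one: starting from $A_{\hat w'}f(x)=j_{\hat w}\sigma(t_0)\int_{N_w}f(t_0^{-1}\hat w^{-1}nx)\,dn$, one immediately pulls $t_0^{-1}$ out on the left using the $B$-equivariance of $f$ itself, since $\delta^{1/2}(t_0^{-1})=\nu(t_0^{-1})=1$ for $t_0\in T_0$, producing $\sigma(t_0^{-1})$ which cancels against the $\sigma(t_0)$ coming from $j_{\hat w'}=j_{\hat w}\circ\sigma(t_0)$ in one step. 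You instead conjugate $t_0^{-1}$ across $\hat w^{-1}$ to $t_1=\hat w t_0^{-1}\hat w^{-1}\in T_0$, push $t_1$ past $N_w$ by a unimodular change of variable, appeal to the $B$-equivariance of the already-constructed $A_wf$ rather than of $f$, and then invoke the intertwining relation $j_{\hat w}^{-1}\sigma(t_1)j_{\hat w}=\sigma(t_0^{-1})$ to close the loop. The two arguments are essentially conjugate to one another: yours transports the $t_0$-factor all the way through $j_{\hat w}$ and the integral before cancelling, whereas the paper cancels it at the source. Your route does have the virtue of displaying explicitly how the intertwining property of $j_{\hat w}$ interacts with the equivariance of the integral, but it requires more verification (normalization of $N_w$ by $T$, unimodularity of the conjugation, the prior step of well-definedness), whereas the paper's is self-contained in one line. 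One small imprecision in your prose: the cancellation that actually occurs in your chain is $\sigma(t_1^{-1})\sigma(t_1)=1$, not $\sigma(t_0)\sigma(t_0^{-1})=1$; the latter is what appears in the shorter direct argument. This does not affect correctness.
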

\begin{proof}
\begin{align*}
&\:\:\:\:\:  (A_wf)(tx) \\
&=j_{\hat w}\int_{N_w}f(\hat w^{-1}ntx)dn\\
&=j_{\hat w}\int_{N_w}f((\hat w^{-1}t\hat w)\hat w^{-1}(t^{-1}nt)x)dn\\
&=\delta^{1/2}(\hat w^{-1}t\hat w)\chi(\hat w^{-1}t\hat w)j_{\hat w}\sigma(\hat w^{-1}t\hat w)\int_{N_w}f(\hat w^{-1}(t^{-1}nt)x)dn\\
&=\delta^{1/2}(\hat w^{-1}t\hat w)\chi(\hat w^{-1}t\hat w)\sigma(t)j_{\hat w}\int_{N_w}f(\hat w^{-1}(t^{-1}nt)x)dn\\
&=\delta^{1/2}(t)\chi^w(t)\sigma(t)j_{\hat w}\int_{N_w}f(\hat w^{-1}nx)dn\\
&=\delta^{1/2}(t)\chi^w(t)\sigma(t)(A_wf)(x)\\
\end{align*}

So $A_w$ is well-defined. Since $j_{\hat w'}=j_{\hat w}\circ \sigma(t_0)$, $A_w$ is independent of the choice of $\hat w$. 

\end{proof}

If $\sigma$ is unramified, then $I(\nu)^K$ is nonzero. Moreover, it has dimension one by \ref{2.2}. The induced map
$$I(\nu)^K \to I(\nu^w)^K$$ can be computed explicitly. See \cite{LS10}, Section 6 for the case when $G$ is the double cover of a simply connected algebraic group over non-archimedean fields, and see \cite{Mc12}, Section 7 for the case when $G$ is an $n$-fold cover of an algebraic group over non-archimedean fields. In \cite{Tang17}, an archimedean analogue is also established. The following proposition is well-known, we record it here for the reader's convenience. We normalize the Haar measure $m$ on $F$ such that $m(R)=d_F^{-1/2}$ where $d_F$ is the discriminant of $F$ over $\mathbb Q_p$.

\begin{prop}\label{2.6}
Suppose that $\alpha$ is a long root when $\Phi$ is not of type $A_1$ and suppose $\sigma$ satisfies \ref{2.4} when $\Phi$ is of type $A_1$.
Let $f_{\nu}^o \in I(\nu)^K$ be a function such that $f_{\nu}^o(1)=v_0$ where $v_0$ is a fixed nonzero vector in $\sigma^{T_0}$. Let $w=w_{\alpha}$. Then $A_wf_{\nu}^o=c(\nu)f_{\nu^{w}}^o$ where
$$c(\nu)=d_F^{-1/2}\frac{1-q^{-1}\nu(h_{\alpha}(\varpi^n))}{1-\nu(h_{\alpha}(\varpi^n))}$$
\end{prop}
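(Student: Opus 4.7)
The plan is to evaluate $A_w f_\nu^o$ at the identity and show the result equals $c(\nu)\,v_0$; since $I(\nu^w)^K$ is one-dimensional by Proposition \ref{2.2} and $A_w f_\nu^o$ is $K$-fixed, this forces $A_w f_\nu^o = c(\nu)\,f_{\nu^w}^o$ on all of $G$. Using Weyl conjugation (a long root is always $W$-conjugate to a simple long root) one may assume $\alpha$ is simple, so $N_w$ is the one-parameter root group $\{x_\alpha(t) : t\in F\}$. Writing $\hat w^{-1} x_\alpha(t) = x_{-\alpha}(\pm t)\,\hat w^{-1}$, invoking right $K$-invariance of $f_\nu^o$, and using $j_{\hat w}(v_0) = v_0$, the integral defining $A_w f_\nu^o(1)$ collapses to $\int_F f_\nu^o(x_{-\alpha}(s))\,ds$. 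I then split according to $|s|\le 1$ versus $|s|>1$: on the compact piece $x_{-\alpha}(s) \in K$ and the integrand is $v_0$, contributing $m(R)\,v_0 = d_F^{-1/2}\,v_0$.

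On $|s| > 1$ I invoke the $\SL_2$ Iwasawa identity
\[
x_{-\alpha}(s) \;=\; x_\alpha(s^{-1})\, h_\alpha(-s^{-1})\, w_\alpha(1)\, x_\alpha(s^{-1}),
\]
which lifts to $G$ because the two outer unipotent factors and $w_\alpha(1)$ lie in $K_0$ and are handled by the canonical section, so no extra $\mu_n$-cocycle intervenes. The $(\delta^{1/2}\nu\sigma)$-equivariance of $f_\nu^o$ under $B$ then gives
\[
f_\nu^o(x_{-\alpha}(s)) \;=\; \delta^{1/2}(h_\alpha(-s^{-1}))\,\nu(h_\alpha(-s^{-1}))\,\sigma(h_\alpha(-s^{-1}))\,v_0.
\]
Parametrizing the shell $v(s) = -k$ ($k \ge 1$) by $s = a\varpi^{-k}$ with $a\in R^\times$ and setting $-s^{-1} = b\varpi^k$, $b = -a^{-1}\in R^\times$, the commutation $h_\alpha(b)h_\alpha(\varpi^k) = (b,\varpi^k)_n\,h_\alpha(b\varpi^k)$ from Section 2.1 combined with $\sigma(h_\alpha(b))v_0 = v_0$ (as $h_\alpha(b) \in T_0$) reduces the shell integrand to
\[
q^{-k}\,\nu(h_\alpha(\varpi))^k\,(b,\varpi)_n^{-k}\,\sigma(h_\alpha(\varpi^k))\,v_0.
\]

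Two final reductions close the computation. First, Proposition \ref{2.4} gives $\sigma(h_\alpha(\varpi^n)) = 1$, and a cocycle check using $(\varpi^n,\varpi^n)_n = (\varpi,\varpi)_n^{n^2} = 1$ shows $h_\alpha(\varpi^n)^q = h_\alpha(\varpi^{nq})$ in $T$, so $\sigma(h_\alpha(\varpi^k))v_0$ depends only on $k\bmod n$. Second, $b\mapsto(b,\varpi)_n$ is a character of $R^\times$ of exact order $n$, so $\int_{R^\times}(b,\varpi)_n^{-k}\,db$ vanishes unless $n\mid k$ and equals $m(R^\times) = d_F^{-1/2}(1-q^{-1})$ when $n\mid k$. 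Only the shells $k = mn$ ($m\ge 1$) survive, and they sum as a geometric series in $\nu(h_\alpha(\varpi^n)) = \nu(h_\alpha(\varpi))^n$; combining with the $d_F^{-1/2}v_0$ from the compact piece and clearing denominators yields
\[
d_F^{-1/2}\cdot\frac{1 - q^{-1}\nu(h_\alpha(\varpi^n))}{1 - \nu(h_\alpha(\varpi^n))}\,v_0 \;=\; c(\nu)\,v_0.
\]
The main delicacy I anticipate is the metaplectic bookkeeping: verifying that the $\SL_2$ Iwasawa identity transports to $G$ cleanly, and that Proposition \ref{2.4} together with the $T_0$-fixity of $v_0$ really do reduce $\sigma(h_\alpha(\varpi^k))v_0$ to dependence on $k \bmod n$, so that character orthogonality on $R^\times$ kills the non-contributing shells and leaves precisely the claimed rational function.
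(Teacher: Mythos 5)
Your proposal is correct and follows essentially the same route as the paper's proof: reduce to the rank-one $\SL_2$ computation, evaluate $A_w f_\nu^o$ at the identity, split $\int_F$ into valuation shells, use the Iwasawa-type decomposition to write the integrand on each shell as $\delta^{1/2}\nu\sigma$ applied to an $h_\alpha$-element, invoke Proposition~\ref{2.4} together with character orthogonality on $R^\times$ to kill all shells with $n\nmid k$, and sum the surviving geometric series. The paper reaches the same integrand $f(h_\alpha(t^{-1}))$ by a chain of left-$N$-invariance and right-$K$-invariance manipulations rather than quoting the $\SL_2$ Iwasawa identity directly, but this is the same algebra reorganized; your cocycle check $(\varpi^n,\varpi^n)_n=1$ is the fact actually needed (namely $h_\alpha(\varpi^n)^m=h_\alpha(\varpi^{nm})$, giving $\sigma(h_\alpha(\varpi^{nm}))=1$), though the parenthetical ``$h_\alpha(\varpi^n)^q$'' and the broader ``depends only on $k\bmod n$'' claim are unnecessary and slightly overstated.
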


\begin{proof}
This is essentially a calculation in the group $SL_2$. To alleviate notation, we use $f$ to denote $f_{\nu}^o$. 

$$\int_F f(w_{\alpha}(-1)x_{\alpha}(t))dt=m(R)v_0+\sum_{k\geq 1}\int_{\varpi^{-k}R^{\times}} f(w_{\alpha}(-1)x_{\alpha}(t)) dt$$ 

For $t \in F-R$, 
\begin{align*}
&\:\:\:\:\:f(w_{\alpha}(-1)x_{\alpha}(t))\\
&=f(x_{\alpha}(t^{-1})w_{\alpha}(-1)x_{\alpha}(t))\\
&=f(w_{\alpha}(-1)x_{-\alpha}(-t^{-1})x_{\alpha}(t))\\
&=f(w_{\alpha}(-1)x_{-\alpha}(-t^{-1})x_{\alpha}(t)x_{-\alpha}(-t^{-1}))\\
&=f(w_{\alpha}(-1)w_{-\alpha}(-t^{-1}))=f(w_{\alpha}(-1)w_{\alpha}(t))\\
&=f(w_{\alpha}(-1)h_{\alpha}(t)w_{\alpha}(1))\\
&=f(h_{\alpha}(t^{-1}))\\
\end{align*}

Let $t^{-1}=\varpi^ku$, $u \in R^{\times}$, then 
$$h_{\alpha}(t^{-1})=(\varpi^k,u)_n^{-1}h_{\alpha}(\varpi^k)h_{\alpha}(u)=(\bar u)^{k\frac{q-1}{n}}h_{\alpha}(\varpi^k)h_{\alpha}(u)$$
where $(\bar u)^{k\frac{q-1}{n}} \in \mathbb F_q^{\times}$ is a $n$-th root of unity in $\mathbb F_q$. The expression is well-defined because $n|q-1$. 
We have
\begin{align*}
&\:\:\:\:\:f(h_{\alpha}(t^{-1}))\\
&=(\bar u)^{k\frac{q-1}{n}}\delta^{1/2}\nu(h_{\alpha}(\varpi^k))\sigma(h_{\alpha}(\varpi^k))v_0\\
&=(\bar u)^{k\frac{q-1}{n}}q^{-k}\nu(h_{\alpha}(\varpi^k))\sigma(h_{\alpha}(\varpi^k))v_0\\
\end{align*}

It is easy to see that, for any $m \in\mathbb Z$, $\sum_{x \in \mathbb F_q}x^m=0$ if $q-1$ does not divide $m$. 

Therefore, if $n$ does not divide $k$, $$\int_{\varpi^{-k}R^{\times}} f(h_{\alpha}(t^{-1})) dt=0$$ and if $n|k$, then by \ref{2.4}, 
$$f(h_{\alpha}(t^{-1}))=q^{-k}\nu(h_{\alpha}(\varpi^k))\sigma(h_{\alpha}(\varpi^k))v_0=q^{-k}\nu(h_{\alpha}(\varpi^k))v_0$$ So 
$$\int_{\varpi^{-k}R^{\times}} f(h_{\alpha}(t^{-1})) dt=m(\varpi^{-k}R^{\times})q^{-k}\nu(h_{\alpha}(\varpi^k))v_0$$
$$=d_F^{-1/2}(1-\frac{1}{q})\nu(h_{\alpha}(\varpi^k))v_0$$

Finally,
\begin{align*}
&\:\:\:\:\:\int_F f(w_{\alpha}(-1)x_{\alpha}(t))dt\\
&=d_F^{-1/2}v_0+\sum_{n|k} d_F^{-1/2}\frac{q-1}{q}\nu(h_{\alpha}(\varpi^k))v_0\\
&=d_F^{-1/2}v_0+d_F^{-1/2}\frac{q-1}{q}\frac{\nu(h_{\alpha}(\varpi^n))}{1-\nu(h_{\alpha}(\varpi^n))}v_0\\
&=d_F^{-1/2}\frac{1-q^{-1}\nu(h_{\alpha}(\varpi^n))}{1-\nu(h_{\alpha}(\varpi^n))}v_0\\
\end{align*}

\end{proof}

\section{Central extensions of split reductive groups over the adeles}

\subsection{Group theory}
Given a number field $k$ containing $\mu_n$, let $\Sigma$ be the set of all places of $k$. Let $\underline G$ be a split simply-connected Chevalley group defined over $k$. We use $\underline G_v$ to denote $\underline G(k_v)$. For each $v$, there is a unique (up to isomorphism) nontrivial central extension $G_v$ of $\underline G_v$ by $\mu_n$ (\cite{St67}): 
$$1\to \mu_n \to G_v \to \underline G_v \to 1$$

Consider the restricted product
$$\mathcal G=\prod_{v\in \Sigma} G_v$$ with respect to the family of compact subgroups $K_v$ of $G_v$ generated by $x_{\alpha}(t)$ ($\alpha\in \Phi, t \in r_v$). For each $v$, let $Z_v$ denote the kernel of the covering projection $G_v\to \underline G_v$. Form the restricted product (i.e., all but finitely many components of any given element is 1)
$\mathcal Z=\prod_{v\in \Sigma}Z_v$ 
and let $\mathcal Z_0$ be the subgroup of $\mathcal Z$ consisting of elements whose coordinates multiply up to one. Define 
$$G(\mathbb A_k):=\mathcal G/\mathcal Z_0$$
which fits into a short exact sequence:
$$1 \to \mu_n \to G(\mathbb A_k) \to \underline G(\mathbb A_k) \to 1$$

\begin{prop}
Suppose that $\underline G$ is a simply-connected Chevalley group. Then there is an injective homomorphism from $\underline G(k)$ to $G(\mathbb A_k)$.
\end{prop}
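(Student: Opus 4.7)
The plan is to use Matsumoto's presentation of $\underline G(k)$. Since $\underline G$ is a split simply-connected Chevalley group, Matsumoto's theorem presents $\underline G(k)$ by generators $\underline x_\alpha(t)$ ($\alpha \in \Phi$, $t \in k$) subject to the Steinberg relations together with the torus identity $h_\alpha(t) h_\alpha(u) = h_\alpha(tu)$ (with no cocycle, since $\underline G(k)$ itself is not a covering group). For each place $v$, the local cover $G_v$ is, by Matsumoto--Moore, the unique nontrivial topological central extension whose torus restriction is governed by the Hilbert norm residue symbol $(\cdot,\cdot)_{n,v}$, and the Steinberg generators $\underline x_\alpha(t) \in \underline G(k_v)$ lift canonically to elements $x_\alpha(t) \in G_v$ for which the Steinberg relations hold on the nose.

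To construct $\phi \colon \underline G(k) \to G(\mathbb A_k)$, I would send each generator $\underline x_\alpha(t) \in \underline G(k)$ to the adelic element whose $v$-component is the canonical lift $x_\alpha(t) \in G_v$. For fixed $t \in k$, one has $t \in r_v$ for almost all $v$, so $x_\alpha(t) \in K_v$ for almost all $v$, and the resulting vector lies in the restricted product $\mathcal G$. Well-definedness amounts to checking that every relation in Matsumoto's presentation is mapped into $\mathcal Z_0$. The Steinberg relations among the $\underline x_\alpha(t)$ pass verbatim through the canonical sections. The torus relation $h_\alpha(t)h_\alpha(u) = h_\alpha(tu)$, which is cocycle-free in $\underline G(k)$, produces in the $v$-component of $\mathcal G$ an extra factor of $(t,u)_{n,v}$; collecting over all $v$, the discrepancy is the element $\bigl((t,u)_{n,v}\bigr)_v \in \mathcal Z$, which lies in $\mathcal Z_0$ precisely by the product formula $\prod_v (t,u)_{n,v} = 1$ for $t,u \in k^\times$. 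The same principle handles the Weyl action and commutator relations among the $h_\alpha$'s, since in Matsumoto's calculus these are assembled from bilinear Steinberg symbols built from the Hilbert symbol.

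For injectivity, observe that composing $\phi$ with the covering projection $G(\mathbb A_k) \to \underline G(\mathbb A_k)$ recovers the diagonal embedding $\underline G(k) \hookrightarrow \underline G(\mathbb A_k)$, which is already injective; hence $\phi$ is injective. The main obstacle is confirming that the full list of Matsumoto relations descends to trivial elements modulo $\mathcal Z_0$. Once one identifies that the nontrivial cocycle contributions are generated by the torus identities, whose per-place defect is exactly the Hilbert symbol, the verification reduces cleanly to Hilbert reciprocity, which is the essential arithmetic input making the splitting possible.
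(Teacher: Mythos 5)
Your argument is correct and follows essentially the same route as the paper: both proofs rest on the Steinberg presentation of the simply-connected Chevalley group (generators $\underline x_\alpha(t)$, the Steinberg relations, and the torus relation $h_\alpha(t)h_\alpha(u)=h_\alpha(tu)$) together with Hilbert reciprocity to kill the adelic cocycle defect. The paper packages this as a pair of mutually inverse surjections between $\underline G(k)$ and the image $H(k)$ of the diagonally-embedded Steinberg generators, whereas you check well-definedness of the diagonal lift directly and get injectivity by composing with the covering projection; this is a cosmetic rearrangement of the same argument. One small nitpick: the presentation you invoke is usually attributed to Steinberg, while ``Matsumoto's theorem'' more properly refers to the computation of the universal central extension, but the substance of what you use is accurate.
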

\begin{proof}
Let $\mathcal G(k)$ be the subgroup of $\mathcal G$ generated by $X_{\alpha}(t)=({x}_{\alpha}(t),{x}_{\alpha}(t),...)$ with $\alpha\in\Phi$, $t\in k$ and let $H(k)$ be its image in $G(\mathbb A_k)$. We still use $X_{\alpha}(t)$ to denote the image of $X_{\alpha}(t)$ in $H(k)$. They generate $H(k)$ by definition. Put $W_{\alpha}(t)=X_{\alpha}(t)X_{-\alpha}(-t^{-1})X_{\alpha}(t)$ and $H_{\alpha}(t)=W_{\alpha}(t)W_{\alpha}(-1)$.   

Claim: $$\underline G(k) \cong H(k)$$

To prove the claim, notice that $H_{\alpha}(t)H_{\alpha}(u)=H_{\alpha}(tu)$ by Hilbert reciprocity. Since $\underline G$ is simply connected, there is a surjection $f:\underline G(k) \to H(k)$ mapping $x_{\alpha}(t)$ to $X_{\alpha}(t)$. On the other hand, there is a surjection $g:\mathcal G(k)\to \underline G(k)$ such that $g(X_{\alpha}(t))=x_{\alpha}(t)$ and that $g$ is trivial on $\mathcal Z_0$. So $g$ factors through $H(k)\to \underline G(k)$ which is still denoted by $g$. Now $g\circ f$ is the identity map, which forces $f,g$ to be isomorphisms.
\end{proof}

In general, if $\underline G$ is a split connected reductive group defined over $k$, such an explicit construction is not available to us. Instead, we will simply impose a number of axioms.

\begin{enumerate}
\item There is a central extension $G(\mathbb A_k)$ of $\underline G(\mathbb A_k)$ by $\mu_n$:
$$1 \to \mu_n \to G(\mathbb A_k) \to \underline G(\mathbb A_k) \to 1$$
\item For each place $v$ of $k$, there is a central extension $$1\to \mu_n \to G_v \to \underline G_v \to 1$$ such that [2.1, Item 1] holds for all $v$ and [2.1, Item 2] holds for all but finitely many $v$.
\item There is a surjection $$\prod_{v\in \Sigma} G_v \to G(\mathbb A_k)$$ whose kernel is in the center of $\prod_{v\in \Sigma} G_v$.
\item There is an injective homomorphism from $\underline G(k)$ to $G(\mathbb A_k)$.
\end{enumerate}

The metaplectic torus $T(\mathbb A_k)$ is the image of the restricted product of $\{T(k_v)\}_{v \in \Sigma}$ with respect to $\{T(r_v)=T(k_v) \cap K_v\}_{v \in \Sigma}$ in $G(\mathbb A_k)$.

\subsection{Principal series representations and intertwining maps}
We begin with a definition:
\begin{definition}
We say a character $\chi=\prod \chi_v$ of $Z_T=Z_T(\mathbb A_k)$ is a genuine automorphic central character (genuine ACC in short) of $T=T(\mathbb A_k)$ if 
\item 1. $\chi(\zeta z)=\zeta \chi(z)$, $\forall z \in Z_T$, $\forall \zeta \in \mu_n$.
\item 2. $\chi$ is trivial on $\underline T(k)\cap Z_T$.
\item 3. $\chi$ is unramified almost everywhere, i.e., for $v$ outside a finite set of places $S$ which contains all archimedean places and places where the residue characteristic divides $n$, $\chi_v$ is trivial on $Z_T(r_v)$.
\end{definition}

Suppose that $\chi=\prod \chi_v$ is invariant under the conjugation action of the Weyl group and $\sigma$ is an irreducible representation of $T(\mathbb A_k)$ with central character $\chi$.
$\sigma$ may be decomposed as a restricted tensor product of genuine irreducible representations $\sigma_v$ of $T_v$ with central character $\chi_v$:
$$\sigma=\bigotimes_{v\in \Sigma} \sigma_v$$
where the the restricted product is with respect to a fixed set of vectors $\{ u_v \neq 0 | v\notin S, u_v\in \sigma_v^{T_v^0} \}$. 

Let $\nu=\prod_v\nu_v$ be an unramified character of $\underline T(k) \backslash \underline T(\mathbb A_k)$. For example, the adelic absolute value function $|\cdot|_{\mathbb A_k}=\prod_v|\cdot|_v: \mathbb A_k^{\times} \to \mathbb R_{>0}$ is an unramified character of $\mathbb A_k^{\times}$. Let 
$$I(\nu)=Ind_{B(\mathbb A)}^{G(\mathbb A)}(\nu\otimes \sigma)$$ 
and let
$$I(\nu_v)=Ind_{B(k_v)}^{G(k_v)}(\nu_v \otimes \sigma_v)$$
We have $\dim I_v(\nu)^{K_v}=1$ for almost all $v$. For such $v$, let $f_{\nu}^v \in I_v(\nu)^{K_v}$ be the unique (vector valued) function so that $f_{\nu}^v(1)=u_v$. Then 
$I(\nu)$ is the restricted tensor product of $I(\nu_v)$ for $v\in \Sigma$ with respect to this set of functions. 

Suppose that $\sigma$ is a submodule of $L^2(\underline{T}(k)\backslash T(\mathbb A_k))$.
For a $w\in W$ and a choice $\hat w$ of its representative in $N_{\underline{G}(k)}\underline T(k)$, 
The natural action of $\hat w$ on $L^2(\underline{T}(k)\backslash T(\mathbb A_k))$ maps $\sigma$ to $\sigma^{\hat w}$ (which has the same underlying vector space but a twisted $T(\mathbb A_k)$-action). Let $j_{\hat w}$ be the vector space isomorphism
\begin{align*}
&\sigma^{\hat w}\to \sigma\\
&\varphi \mapsto \varphi^{\hat w}(t)=\varphi(\hat w^{-1}t\hat w)\\
\end{align*}
We have $j_{\hat w}=\otimes_v j_{\hat w}^v$ where $j_{\hat w}^v: \sigma_v^{\hat w} \to \sigma_v$ is defined in Section 2.2 for $v\notin S$.

Take a representative $\hat w$ of $w$, define for any $f\in I(\nu)$
$$\big(A_{w}(\nu)(f)\big)(x)=j_{\hat w}\int_{N_w(\mathbb A_k)} f(\hat w^{-1}nx)dn $$
Here $N_w(\mathbb A_k)=(N(\mathbb A_k) \cap wN(\mathbb A_k)w^{-1})\backslash N(\mathbb A_k)$ and we normalize the Haar measure on $\mathbb A_k$ so that it is the product measure of the Lebesgue measure on $\mathbb R$ for $v$ real, twice the Lebesgue measure on $\mathbb C$ for $v$ complex, and the unique Haar measure on $k_v$ such that $r_v$ has measure $d_{k_v}^{-1/2}$ for $v$ non-archimedean, where $d_{k_v}$ is the absolute discriminant of $k_v$. 

\begin{prop}
$A_{w}(\nu)$ maps $I(\nu)$ into $I(\nu^w)$ and is independent of the choice of a representative of $w$ in $N_{\underline{G}(k)}\underline T(k)$.
\end{prop}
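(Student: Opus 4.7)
The plan is to adapt the local argument from Section 2.2 (the lemma on $A_w$ proved just before Proposition \ref{2.6}) to the adelic setting. The statement splits cleanly into two parts: \textbf{(a)} that $(A_w(\nu)f)(tx) = \delta^{1/2}(t)\nu^w(t)\sigma(t)(A_w(\nu)f)(x)$ for $t \in T(\mathbb A_k)$, so that $A_w(\nu)f$ lies in $I(\nu^w)$; and \textbf{(b)} that replacing $\hat w$ by another representative $\hat w' = \hat w t_0$ with $t_0 \in \underline T(k)$ gives the same operator. A preliminary remark is that the integral over $N_w(\mathbb A_k)$ converges absolutely only for $\nu$ in a suitable positive cone, with $A_w(\nu)$ defined elsewhere by meromorphic continuation; this is standard in the linear theory and transfers to the cover without modification, since $I(\nu)$ is the restricted tensor product of the local $I(\nu_v)$.

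Claim (a) is a line-by-line lift of the local computation: substitute $n \mapsto tnt^{-1}$ on $N_w(\mathbb A_k)$ to produce a Jacobian factor that combines with the $\delta^{1/2}$ built into normalized induction, apply the induction formula to push $\hat w^{-1}t\hat w \in T(\mathbb A_k)$ out of $f$ and extract $\delta^{1/2}(\hat w^{-1}t\hat w)\nu(\hat w^{-1}t\hat w)\sigma(\hat w^{-1}t\hat w)$, and finally commute $\sigma(\hat w^{-1}t\hat w)$ past $j_{\hat w}$ via its defining identity $j_{\hat w}\circ\sigma(\hat w^{-1}t\hat w) = \sigma(t)\circ j_{\hat w}$. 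No new global input is required.

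Claim (b) is where the global structure really enters. For $\hat w' = \hat w t_0$, pushing $t_0^{-1}$ through $f$ by the induction rule gives
\[
f(t_0^{-1}\hat w^{-1}nx) = \delta^{1/2}(t_0^{-1})\nu(t_0^{-1})\sigma(t_0^{-1})f(\hat w^{-1}nx),
\]
and the product formula for absolute values together with $\nu$ being a character of $\underline T(k)\backslash\underline T(\mathbb A_k)$ forces $\delta^{1/2}(t_0) = \nu(t_0) = 1$, so only $\sigma(t_0^{-1})$ survives inside the integral. The second ingredient is the identity $j_{\hat w'} = j_{\hat w}\circ \sigma(t_0)$, which I would establish directly in the automorphic realization: for $\varphi \in \sigma \subset L^2(\underline T(k)\backslash T(\mathbb A_k))$,
\[
j_{\hat w'}(\varphi)(t) = \varphi(t_0^{-1}\hat w^{-1}t\hat w t_0) = \varphi(\hat w^{-1}t\hat w\cdot t_0) = \bigl(j_{\hat w}\circ\sigma(t_0)\bigr)(\varphi)(t),
\]
where the middle equality is left $\underline T(k)$-invariance of $\varphi$ applied to $y = \hat w^{-1}t\hat w t_0$, and the final equality uses the right regular action $(\sigma(t_0)\varphi)(y) = \varphi(yt_0)$. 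Combining the two steps, the surviving $\sigma(t_0^{-1})$ cancels against the $\sigma(t_0)$ hidden in $j_{\hat w'}$, so $A_{w,\hat w'}(\nu) = A_{w,\hat w}(\nu)$.

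The step I expect to be the main subtlety is precisely $j_{\hat w'} = j_{\hat w}\circ\sigma(t_0)$. One is tempted to derive it by invoking centrality of $t_0$ in $T(\mathbb A_k)$, but this fails in general: Hilbert reciprocity gives $\prod_v(a,b)_{n,v} = 1$ only for $a,b \in k^\times$, not when $b$ is allowed to range over $\mathbb A_k^\times$, so $\underline T(k)$ need not lie in $Z_T(\mathbb A_k)$. The correct replacement is automorphic left $\underline T(k)$-invariance of $\sigma$, which supplies exactly the cancellation that centrality would have provided in the local case.
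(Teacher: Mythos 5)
Your proof is correct and follows essentially the same route as the paper's: part (a) is indeed a verbatim lift of the local computation, and for part (b) the paper likewise kills the $\delta^{1/2}$ and $\nu$ factors because $t_0\in\underline T(k)$ and cancels the surviving $\sigma(t_0^{-1})$ against $j_{\hat w'}=j_{\hat w}\circ\sigma(t_0)$, which it justifies, as you do, by left $\underline T(k)$-invariance of functions in the automorphic realization of $\sigma$. Your explicit derivation of $j_{\hat w'}=j_{\hat w}\circ\sigma(t_0)$ and the remark explaining why one cannot simply appeal to centrality of $t_0$ usefully spell out what the paper states in a single line.
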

\begin{proof}
By the definition of $j$, $A_{w}(\nu)$ maps $I(\nu)$ into $I(\nu^w)$. If $\hat w'=\hat wt$ for some $t \in \underline T(k)$, then $f(\hat w'^{-1}nx)=\delta_N^{1/2}(t)^{-1}\nu(t)^{-1}\sigma(t)^{-1}f(\hat w^{-1}nx)=\sigma(t)^{-1}f(\hat w^{-1}n x)$. On the other hand, $j_{\hat w'}=j_{\hat w}\circ \sigma(t)$ since any function in the vector space $\sigma$ is left $\underline T(k)$-invariant. Thus $A_w(\nu)$ is independent of the choice of a representative of $w$ in $N_{\underline{G}(k)}\underline T(k)$.  
\end{proof}

\begin{prop}
$$A_{w}(\nu)=\bigotimes_v A_{w}(\nu_v)$$ where for each $v$, $$A_{w}(\nu_v): I(\nu_v) \to I(\nu_v^w)$$ is an intertwining map between local principal series.  
\end{prop}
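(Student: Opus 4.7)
The plan is to evaluate both sides on a pure tensor $f = \otimes_v f_v$ with $f_v = f_\nu^v$ for $v$ outside some finite set $S' \supseteq S$, and show they agree; since $I(\nu)$ is the restricted tensor product of the $I(\nu_v)$ with respect to the spherical vectors $\{f_\nu^v\}_{v \notin S}$, such pure tensors span a dense subspace, so this suffices.

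First I would invoke the fact that the Haar measure on $N_w(\mathbb A_k)$ is the restricted product of local Haar measures on $N_w(k_v)$ normalized as specified before the statement, and note that on a pure tensor the integrand factors as a product of local integrands. An application of Fubini (in an appropriate region of convergence) yields
$\int_{N_w(\mathbb A_k)} f(\hat w^{-1} n x)\, dn = \prod_v \int_{N_w(k_v)} f_v(\hat w^{-1} n_v x_v)\, dn_v$,
with all but finitely many factors computed explicitly by the Gindikin--Karpelevich formula of Proposition \ref{2.6}. Next, since $\sigma = \otimes_v \sigma_v$ and the global twisting isomorphism $j_{\hat w}:\sigma^{\hat w} \to \sigma$ manifestly factors as the tensor product $\otimes_v j_{\hat w}^v$ of the local isomorphisms defined in Section 2.2 (for $v \notin S$ by the spherical normalization, and at the remaining places by the underlying vector space identification), applying $j_{\hat w}$ to the above product reproduces the pure tensor $\otimes_v A_w(\nu_v)(f_v)$. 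Finally, checking that each factor $A_w(\nu_v):I(\nu_v) \to I(\nu_v^w)$ is a $G(k_v)$-intertwining map is the content of the local well-definedness lemma preceding Proposition \ref{2.6}, applied verbatim at each place.

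The main obstacle is the analytic justification of the Fubini step: the Euler product
$\prod_{v \notin S'} d_{k_v}^{-1/2}\frac{1 - q_v^{-1}\nu_v(h_\alpha(\varpi_v^n))}{1 - \nu_v(h_\alpha(\varpi_v^n))}$
coming from the unramified Gindikin--Karpelevich factors for a simple reflection $w = w_\alpha$ converges absolutely only when the real part of $\nu$ lies in a suitable positive cone (comparable to an Euler product of Hecke $L$-values), and outside that cone the desired identity must be obtained by meromorphic continuation of both sides. For a general Weyl group element $w = s_{\alpha_1}\cdots s_{\alpha_\ell}$ in a reduced expression, one reduces to the rank-one analysis above via the cocycle relation $A_{w_1 w_2}(\nu) = A_{w_1}(\nu^{w_2}) \circ A_{w_2}(\nu)$, which respects the tensor factorization on both sides, so it suffices to establish the factorization for simple reflections.
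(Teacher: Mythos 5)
Your argument is in spirit the same as the paper's, which gives a one-line proof: it follows from the definition of the intertwining operators (the integral over $N_w(\mathbb A_k)$ factors on pure tensors by Fubini, at least in a region of absolute convergence) and the identity $j_{\hat w} = \otimes_v j^v_{\hat w}$. Your expansion of the Fubini step and the unramified Gindikin--Karpelevich computation is a faithful filling-in of the paper's terse justification.

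One caveat worth flagging: the final paragraph, where you reduce the case of general $w$ to simple reflections via the cocycle relation $A_{w_1w_2}(\nu) = A_{w_1}(\nu^{w_2})\circ A_{w_2}(\nu)$, is both unnecessary and, in the paper's logical ordering, potentially circular. The global cocycle relation appears as Corollary \ref{3.8}, which is proved \emph{after} the present proposition by passing through the scalar-valued operators $M_w$ of Moeglin--Waldspurger and the identity $(A_w(\nu)f)^* = M_w(\nu)f^*$. The direct argument does not need it: for any fixed $w\in W$, not just a simple reflection, the defining integral is over $N_w(\mathbb A_k) = \prod_v' N_w(k_v)$, and on a pure tensor it factors by Fubini in the appropriate cone of $\nu$. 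The unramified factors are then controlled by the full Gindikin--Karpelevich product over $\{\alpha>0 : w\alpha<0\}$ rather than a single rank-one factor, and the Euler product still converges for $\mathrm{Re}\,\nu$ sufficiently dominant. So the reduction to rank one buys nothing and should be dropped; the rest of your proof is the intended argument.
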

\begin{proof}
This follows from the definition of intertwining operators and the  identity $j_{\hat w}=\otimes j_{\hat w}^v$.
\end{proof}

\begin{remark}
The tensor decomposition of $A_w(\nu)$ also yields for $v \in S$ a canonical intertwining operator at $v$ (i.e., it does not depend on the choice of a representative of $w$).
\end{remark}

Let 
$$C^{\infty}(N(\mathbb A_k)\underline T(k) \backslash G(\mathbb A_k))$$ be the space of smooth, left $N(\mathbb A_k)$ invariant, left $\underline T(k)$-invariant, right $K(\mathbb A_k)$-finite functions on $G(\mathbb A_k)$, and let  
$$C^{\infty}(N(\mathbb A_k)\underline T(k) \backslash G(\mathbb A_k))_{\nu\otimes \sigma}$$ be the subspace consisting of functions $\phi$ such that for any $k\in K(\mathbb A_k)$, the function $\phi_k(t)=\delta^{-1/2}(t)\phi(tk)$ belongs to the isotypic component of $\nu\otimes \sigma$ in $L^2(\underline{T}(k)\backslash T(\mathbb A_k))$. (This definition is from \cite{MW95}, Ch 1, 3.3)

\begin{lemma}
For $f \in I(\nu)$, let $f^*$ to be the associated $\mathbb C$-valued function given by
$f^*(x)=(f(x))(1)$. Then $f^*\in C^{\infty}(N(\mathbb A)\underline T(k) \backslash G(\mathbb A))_{\nu\otimes \sigma}$.
\end{lemma}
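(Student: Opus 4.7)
The plan is to verify the defining properties of the target space in turn. Smoothness and right $K(\mathbb A)$-finiteness of $f^*$ are inherited directly from $f$, since evaluation at $1 \in T(\mathbb A_k)$ is a linear functional on $\sigma$ commuting with the right $G(\mathbb A)$-translation on $I(\nu)$. Left $N(\mathbb A)$-invariance is likewise immediate: $\delta^{1/2}(n) = 1$ and $\nu \otimes \sigma$ extends trivially to $N$ in parabolic induction, so $f(nx) = f(x)$ and hence $f^*(nx) = f^*(x)$.

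The heart of the matter is left $\underline T(k)$-invariance. For $\gamma \in \underline T(k)$, embedded in $T(\mathbb A_k)$ via Axiom (4) of Section 3.1, the transformation law for $f$ gives
$$
f(\gamma x) = \delta^{1/2}(\gamma)\,\nu(\gamma)\,\sigma(\gamma)\, f(x).
$$
The factor $\delta^{1/2}(\gamma)$ equals $1$ by the product formula applied to $\det \mathrm{Ad}(\gamma)|_{\mathrm{Lie}(N)} \in k^{\times}$, and $\nu(\gamma) = 1$ by the defining property of an unramified automorphic character. For the remaining factor, I would invoke Hilbert reciprocity (the product formula for the Hilbert symbol, together with Item 1 of Section 2.1) to conclude that $[t,\gamma] = 1$ for every $t \in T(\mathbb A_k)$, so $\underline T(k)$ sits in the center of $T(\mathbb A_k)$. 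Consequently, for any $\varphi \in \sigma \subset L^2(\underline T(k) \backslash T(\mathbb A_k))$ equipped with the right regular action,
$$
(\sigma(\gamma)\varphi)(t) = \varphi(t\gamma) = \varphi(\gamma t) = \varphi(t),
$$
the last equality being the left $\underline T(k)$-invariance of $\varphi$. Evaluating at $t = 1$ yields $f^*(\gamma x) = f^*(x)$.

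For the isotypic condition, fix $k_0 \in K(\mathbb A)$ and compute
$$
\phi_{k_0}(t) = \delta^{-1/2}(t)\, f(tk_0)(1) = \delta^{-1/2}(t)\,\delta^{1/2}(t)\,\nu(t)\,\bigl(\sigma(t)f(k_0)\bigr)(1) = \nu(t)\, f(k_0)(t),
$$
using $(\sigma(t)\varphi)(1) = \varphi(t)$ under the right regular action. Thus $\phi_{k_0}$ is the pointwise product of $\nu$ with the vector $f(k_0) \in \sigma$. A direct calculation shows that $\varphi \mapsto \nu\cdot\varphi$ intertwines $\nu \otimes \sigma$ on $\sigma$ with the right regular representation, since
$$
R_s(\nu\cdot \varphi)(t) = \nu(ts)\varphi(ts) = \nu(s)\bigl(\nu \cdot \sigma(s)\varphi\bigr)(t).
$$
Hence $\phi_{k_0}$ lies in the $\nu \otimes \sigma$-isotypic component, completing the verification.

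The main obstacle will be the left $\underline T(k)$-invariance step, as one must track carefully how the global splitting $\underline T(k) \hookrightarrow T(\mathbb A_k)$ interacts with the nonabelian structure of the metaplectic torus and invoke Hilbert reciprocity to annihilate the adelic commutator cocycle — a global input with no direct local analogue.
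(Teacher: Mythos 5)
Your overall structure — checking smoothness, left $N(\mathbb A)$-invariance, left $\underline T(k)$-invariance, and the isotypic condition in turn — matches the paper, and the computation of $\phi_{k_0}(t) = \nu(t)f(k_0)(t)$ is exactly what the paper does. However, your treatment of the $\underline T(k)$-invariance step contains a genuine error. You assert, via Hilbert reciprocity, that $[t,\gamma]=1$ for every $t\in T(\mathbb A_k)$ and $\gamma\in\underline T(k)$, i.e.\ that $\underline T(k)$ is central in $T(\mathbb A_k)$. This is false. Hilbert reciprocity gives $\prod_v (a,b)_{n,v}=1$ for \emph{rational} $a,b\in k^{\times}$; but if $\gamma$ corresponds to $y_1\otimes c$ with $c\in k^{\times}$ and $t$ corresponds to $y_2\otimes a$ with $a\in\mathbb A_k^{\times}$ genuinely adelic, the commutator is $\prod_v(c,a_v)_{n,v}^{(y_1,y_2)}$, and the $a_v$ vary independently over the places. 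For a concrete failure take $n=2$, $k=\mathbb Q$, $c=-1$, and $a$ equal to $-1$ at the archimedean place and $1$ elsewhere: the product reduces to $(-1,-1)_\infty^{(y_1,y_2)}=(-1)^{(y_1,y_2)}$, which is $-1$ when $(y_1,y_2)$ is odd. So the chain $(\sigma(\gamma)\varphi)(t)=\varphi(t\gamma)=\varphi(\gamma t)=\varphi(t)$ does not hold for general $t$, and indeed $\sigma(\gamma)$ is \emph{not} the identity operator on $\sigma$.

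The good news is that the conclusion you want does not require centrality, and this is precisely the economy of the paper's proof. You only ever need to evaluate at $t=1$: then $(\sigma(\gamma)\varphi)(1)=\varphi(1\cdot\gamma)=\varphi(\gamma)$ by the right regular action, and $\varphi(\gamma)=\varphi(\gamma\cdot 1)=\varphi(1)$ by the left $\underline T(k)$-invariance built into $\varphi\in L^2(\underline T(k)\backslash T(\mathbb A_k))$ — here $1$ commutes with $\gamma$ trivially, so no commutator cocycle ever appears. Your remark that the global splitting's interaction with the nonabelian metaplectic torus is "the main obstacle" is in fact a red herring: the whole point of the paper's formulation is that the left-invariance of vectors in $\sigma$ is already encoded by realizing $\sigma$ inside $L^2(\underline T(k)\backslash T(\mathbb A_k))$, and evaluation at the identity short-circuits any need to understand the conjugation action of $\underline T(k)$ on $T(\mathbb A_k)$. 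Dropping the centrality claim and simply evaluating at the identity repairs the argument and reproduces the paper's proof.
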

\begin{proof}
First, $f^*\in C^{\infty}(N(\mathbb A)\underline T(k) \backslash G(\mathbb A))$: $N(\mathbb A)$-invariance of $f^*$ is obvious. For $\underline T(k)$-invariance, we note that for $t\in \underline T(k)$, $f^*(tx)=f(tx)(1)=(\sigma(t)f(x))(1)=f(x)(t)$ which is $f(x)(1)$ since $f(x)\in \sigma$. 
It remains to check that $\nu^{-1}f^*_k\in\sigma$ for any $k\in K(\mathbb A)$. In fact, $\forall t \in T(\mathbb A)$, $\nu^{-1}(t)\delta(t)^{-1/2}f^*(tk)=\nu^{-1}(t)\delta(t)^{-1/2}f(tk)(1)=(\sigma(t)f(k))(1)$
$=f(k)(t)$ which shows $\nu^{-1}f^*_k=f(k)\in \sigma$.
\end{proof}

For $\phi\in C^{\infty}(N(\mathbb A)\underline T(k) \backslash G(\mathbb A))_{\nu\otimes \sigma}$ and $x\in G(\mathbb A)$, whenever the integral below is convergent, we set 
$$\big(M_{w}(\nu)\phi\big)(x)=\int_{N_w(\mathbb A_k)} \phi(w^{-1}nx)dn$$ note that this integral is independent of the choice of a representative of $w$.

\begin{prop}
The integral defined above converges absolutely when $\nu$ belongs to an open subset of  $X^*(T)\otimes \mathbb C$ and $M_w(\nu)$ maps $C^{\infty}(N(\mathbb A)\underline T(k) \backslash G(\mathbb A))_{\nu\otimes \sigma}$ into $C^{\infty}(N(\mathbb A)\underline T(k) \backslash G(\mathbb A))_{\nu^w\otimes \sigma}$.
\end{prop}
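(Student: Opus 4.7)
The plan is to reduce to the rank-one case via a reduced expression of $w$. Write $w = s_{\alpha_1}\cdots s_{\alpha_r}$ with each $\alpha_i$ simple. The unipotent subgroup $N_w$ factors up to measure as a product of the conjugates $(s_{\alpha_{i+1}}\cdots s_{\alpha_r})\, N_{\alpha_i}\,(s_{\alpha_{i+1}}\cdots s_{\alpha_r})^{-1}$, and a standard change of variables identifies $M_w(\nu)$, on the locus where the iterated integrand is absolutely integrable, with the composition $M_{s_{\alpha_1}}(s_{\alpha_2}\cdots s_{\alpha_r}\nu) \circ \cdots \circ M_{s_{\alpha_r}}(\nu)$. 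Accordingly it suffices to prove the proposition for $w = s_\alpha$ with $\alpha$ simple, and then take the intersection of the resulting open cones in $X^*(T)\otimes \mathbb C$.

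For $w = s_\alpha$, the domain $N_w(\mathbb A_k)$ is parametrized by $n \mapsto x_\alpha(n)$, $n \in \mathbb A_k$. I would split the integral into a bounded piece near $0$, which is trivially finite, and its complement. On the complement, the $\mathrm{SL}_2$-identity exploited in the proof of Proposition~\ref{2.6} yields a decomposition $w^{-1} x_\alpha(n) = u(n)\, h_\alpha(-n^{-1})\, k(n)$ with $u(n) \in N(\mathbb A_k)$ and $k(n)$ lying in a fixed compact subset of $G(\mathbb A_k)$. Using the left $N(\mathbb A_k)$-invariance and the $(\nu\otimes\sigma)$-equivariance of $\phi$ in the torus variable, the integrand is bounded, uniformly for $x$ in a compact set, by a constant multiple of $\delta^{1/2}(h_\alpha(n^{-1}))\,|\nu(h_\alpha(n^{-1}))|$. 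The adelic integral then reduces to a Tate-type one-variable integral and converges absolutely on the half-space $\{\mathrm{Re}\langle \nu,\alpha^\vee\rangle > 0\}$, a non-empty open subset of $X^*(T)\otimes \mathbb C$. For the mapping property, substituting $n \mapsto tnt^{-1}$ on $N_w(\mathbb A_k)$ contributes a Jacobian $\delta_{N_w}(t)$ which, combined with the transformation of $\phi$ under $\hat w^{-1} t \hat w \in T(\mathbb A_k)$, produces the required $(\nu^w\otimes\sigma)$-equivariance of $M_w(\nu)\phi$; left $N(\mathbb A_k)$- and $\underline T(k)$-invariance, smoothness, and $K$-finiteness are inherited directly from $\phi$, using that $\hat w \in N_{\underline G(k)}\underline T(k)$ normalizes $\underline T(k)$.

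The principal obstacle is uniform control of the rank-one integrand across all places at once: at the ramified places $v \in S$ the local $\sigma_v$ is not governed by the clean unramified formula of Proposition~\ref{2.6}, so one must estimate the matrix coefficient $(u_v, \sigma_v(h_\alpha(n^{-1})) u_v)$ directly, and at archimedean places one invokes the classical growth bounds for intertwining operators on real reductive groups. Combined with the tensor decomposition $A_w(\nu) = \bigotimes_v A_w(\nu_v)$ established in the preceding proposition (together with the remark on canonical local intertwining operators for $v \in S$), this reduces global absolute convergence to a place-by-place analysis on a translate of the positive Weyl chamber by some constant, which is manifestly open in $X^*(T)\otimes \mathbb C$.
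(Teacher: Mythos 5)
Your approach is genuinely different from the paper's, which is essentially a one-liner: cite M\oe glin--Waldspurger (\cite{MW95}, Ch.~2, 1.6) for both the absolute convergence on an open set and the fact that $M_w(\nu)$ maps $C^\infty(\cdots)_{\nu\otimes\sigma}$ into $C^\infty(\cdots)_{\nu^w\otimes\sigma^w}$, then observe that the target coincides with $C^\infty(\cdots)_{\nu^w\otimes\sigma}$ because $\sigma\cong\sigma^w$, which holds since $\sigma$ has a Weyl-group-invariant central character. You instead set out to reprove the MW convergence result from scratch via a reduced decomposition of $w$, rank-one estimates, and the $\mathrm{SL}_2$ trick from Proposition~\ref{2.6}. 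This is legitimate (it is roughly the content of the MW argument itself) but considerably longer, and as written it remains a sketch: the uniform control of the local factors at ramified and archimedean places, the validity of the composition law $M_w = M_{s_{\alpha_1}}\circ\cdots\circ M_{s_{\alpha_r}}$ on the overlap of convergence domains, and the bookkeeping of the $\delta$-factors are all asserted rather than carried out. Also note that the tensor decomposition of the previous proposition is stated for $A_w(\nu)$ on $\sigma$-valued sections, whereas $M_w(\nu)$ acts on $\mathbb C$-valued functions; the comparison of the two is only made in the lemma that follows this proposition, so invoking it here is slightly out of order (though harmless for absolute convergence, since the absolute values agree).

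There is one genuine gap. You assert that the substitution $n\mapsto tnt^{-1}$ combined with the transformation of $\phi$ under $\hat w^{-1}t\hat w$ ``produces the required $(\nu^w\otimes\sigma)$-equivariance of $M_w(\nu)\phi$.'' That is not what the change of variables yields directly. What you obtain is that $t\mapsto\delta^{-1/2}(t)\,(M_w(\nu)\phi)(tk)$ lies in the $(\nu^w\otimes\sigma^w)$-isotypic component of $L^2(\underline T(k)\backslash T(\mathbb A_k))$, i.e.\ in the $w$-twisted representation, not in $\sigma$ itself. To land in the space $C^\infty(\cdots)_{\nu^w\otimes\sigma}$ as the proposition states, you still need the observation --- which is precisely the nontrivial content of the paper's short proof beyond the citation --- that $\sigma^w\cong\sigma$ as $T(\mathbb A_k)$-representations, so that the two isotypic components are equal as subspaces of $L^2(\underline T(k)\backslash T(\mathbb A_k))$. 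This is exactly where the standing hypothesis that $\sigma$ has a Weyl-group-invariant central character enters, and it should be made explicit rather than silently absorbed into the word ``required.''
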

\begin{proof}
The first half is proved in \cite{MW95}, Ch.2, 1.6 which also proved that $M_w(\nu)$ maps the space
$C^{\infty}(N(\mathbb A)\underline T(k) \backslash G(\mathbb A))_{\nu\otimes \sigma}$ into $C^{\infty}(N(\mathbb A)\underline T(k) \backslash G(\mathbb A))_{\nu^w\otimes \sigma^{w}}$. But the latter is equal to 
$$C^{\infty}(N(\mathbb A)\underline T(k) \backslash G(\mathbb A))_{\nu^w\otimes \sigma}$$ since $\sigma\cong \sigma^w$. 
\end{proof}

\begin{thm}(\cite{MW95}, Ch.4, 1.10)
$\nu \mapsto M_w(\nu)$ can be continued in a unique way to a meromorphic function on $X^*(T)\otimes \mathbb C$. Moreover, we have the equality of meromorphic functions given by 
$$M_{w'}(\nu^w)\circ M_w(\nu)=M_{w'w}(\nu)$$
\end{thm}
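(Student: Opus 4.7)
Both assertions are proved for linear groups in \cite{MW95}, Ch.\,IV, and the plan is to verify that the argument transfers to the metaplectic setting. Since $\mu_n$ acts by scalars throughout and $\sigma$ is realized inside $L^2(\underline T(k)\backslash T(\mathbb A_k))$, the Eisenstein-series analysis is formally identical to the linear case, with the only new bookkeeping being the insertion of the normalized isomorphisms $j_{\hat w}$ of Section 2.2.

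For the meromorphic continuation, I would associate to each $\phi \in C^{\infty}(N(\mathbb A)\underline T(k)\backslash G(\mathbb A))_{\nu\otimes\sigma}$ the Eisenstein series
$$E(\phi,\nu,x) \;=\; \sum_{\gamma\in\underline B(k)\backslash \underline G(k)} \phi(\gamma x),$$
which by the preceding proposition converges absolutely in a Weyl cone. By the Langlands--Moeglin--Waldspurger theorem (\cite{MW95}, Ch.\,IV), $E(\phi,\nu,x)$ extends meromorphically in $\nu\in X^*(T)\otimes\mathbb C$. Computing its constant term along $B$ via the Bruhat decomposition $\underline G(k)=\bigsqcup_{w\in W}\underline B(k)\,\hat w\,\underline B(k)$ and unfolding the double cosets gives
$$\int_{N(k)\backslash N(\mathbb A)} E(\phi,\nu,nx)\,dn \;=\; \sum_{w\in W}\bigl(M_w(\nu)\phi\bigr)(x).$$
Because the various summands transform under $T(\mathbb A)$ by the generically distinct characters $\delta^{1/2}\nu^w\otimes\sigma$, each $M_w(\nu)\phi$ can be isolated from $E_B$ by a $W$-averaged projection, and thus inherits the meromorphic continuation; uniqueness on the connected complex manifold $X^*(T)\otimes\mathbb C$ is immediate from the identity principle.

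For the cocycle identity, I would first dispatch the length-additive case $\ell(w'w)=\ell(w')+\ell(w)$. There, the decomposition of positive root systems $\Phi(w'w)=\Phi(w')\sqcup w'\Phi(w)$ yields
$$N_{w'w} \;\cong\; N_{w'}\cdot\bigl(\hat{w}'N_w(\hat{w}')^{-1}\bigr)$$
with compatible Haar measures, and the identity $\hat w^{-1}n(\hat{w}')^{-1}=(\hat{w}'\hat w)^{-1}\bigl(\hat{w}'n(\hat{w}')^{-1}\bigr)$, together with the compatibility $j_{\hat{w}'}\circ j_{\hat w}=j_{\hat{w}'\hat w}$ of the normalized isomorphisms, converts the iterated integral defining $M_{w'}(\nu^w)\circ M_w(\nu)$ into the single integral defining $M_{w'w}(\nu)$ by Fubini. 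For arbitrary $w,w'$ I would factor each as a reduced product of simple reflections and apply the length-additive case at each step; any non-length-additive composition is then handled by the rank-one inversion $M_{s_\alpha}(\nu^{s_\alpha})\circ M_{s_\alpha}(\nu)=\mathrm{id}$, up to an explicit meromorphic scalar produced by the Gindikin--Karpelevich computation of Proposition \ref{2.6}. The resulting equality, holomorphic in the cone of absolute convergence, then propagates to all of $X^*(T)\otimes\mathbb C$ by uniqueness of meromorphic continuation.

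The chief obstacle is the meromorphic continuation of the Eisenstein series itself, which in the covering context rests on realizing $\sigma$ inside $L^2(\underline T(k)\backslash T(\mathbb A_k))$; this is precisely the role of the genuine automorphic central characters whose existence is established in Section 5. Once that input is in hand, the remainder of the argument is a routine transcription of the linear theory, with the factors $j_{\hat w}$ carrying the genuine piece of the data.
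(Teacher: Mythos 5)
The paper gives no proof of this statement: it is quoted verbatim from \cite{MW95}, Ch.~IV, 1.10, and the whole point of the set-up in Section~3.2 (working in the space $C^{\infty}(N(\mathbb A)\underline T(k)\backslash G(\mathbb A))_{\nu\otimes\sigma}$, which is lifted word-for-word from \cite{MW95}, Ch.~I, 3.3) is to place the covering group in the exact framework of that book, so that IV.1.10 applies as a black box. Your proposal, by contrast, tries to reconstruct the proof, and two things in it deserve comment.

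First, the meromorphic continuation. You invoke ``the Langlands--Moeglin--Waldspurger theorem'' to continue $E(\phi,\nu,x)$, then read off the continuation of the $M_w(\nu)$ from the constant term. But in \cite{MW95} the continuations of $E$ and of the intertwining operators are proved simultaneously by an intricate bootstrapping; one cannot first black-box the continuation of $E$ and then ``deduce'' that of $M_w$, because the former \emph{is} the substance of the theorem you are trying to prove. So as a proof your first paragraph is circular: it is not a different route, it is a restatement. The honest version of what the paper is doing is exactly what the paper does --- cite IV.1.10 and check that the covering hypotheses are met.

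Second, the cocycle identity. The length-additive case via the decomposition $N_{w'w}=N_{w'}\cdot\hat w'N_w\hat w'^{-1}$ and Fubini is fine in the cone of absolute convergence (note, though, that the normalizers $j_{\hat w}$ play no role here: the paper's $M_w$ acts on $\underline T(k)$-invariant functions and the integral is manifestly independent of the lift $\hat w$, unlike the local $A_w$; you are conflating the two operators). The genuine gap is in the reduction to the non-length-additive case. You write that the rank-one inversion $M_{s_\alpha}(\nu^{s_\alpha})\circ M_{s_\alpha}(\nu)$ equals the identity ``up to an explicit meromorphic scalar produced by the Gindikin--Karpelevich computation.'' If that scalar were anything other than $1$, the asserted identity $M_{w'}(\nu^w)\circ M_w(\nu)=M_{w'w}(\nu)$ would be false. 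Showing that the scalar \emph{is} $1$ is precisely the global functional equation $M(w^{-1},w\nu)M(w,\nu)=\mathrm{id}$, which in turn encodes the product formula (the paper's Proposition~3.10) over all places, and is a deep consequence of the full theory of Eisenstein series --- not a local Gindikin--Karpelevich computation. Your sketch thus uses the theorem to prove the theorem at the crucial step. The way \cite{MW95} actually proceeds is to first establish the functional equations of $E$ (IV.1.10~(a)--(b)) and then derive the cocycle relation (c) from them, never by composing simple-reflection operators.

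So: the proposal is not a valid alternative proof. The paper has no proof, and is right not to: the result is a direct citation once one checks (as the paper does) that the covering-group data fits the framework of \cite{MW95}.
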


\begin{lemma}
For any $f\in I(\nu)$, we have 
$$(A_w(\nu)f)^*=M_w(\nu)f^*$$
\end{lemma}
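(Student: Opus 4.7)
The plan is to unfold both sides of the asserted identity and observe that the evaluation-at-$1$ functional on $\sigma$ is precisely the invariant that intertwines $j_{\hat w}$ with itself. First I would expand the left-hand side: by definition
\[
(A_w(\nu)f)^*(x) = \bigl(A_w(\nu)f(x)\bigr)(1) = \Bigl(j_{\hat w}\!\!\int_{N_w(\mathbb{A}_k)} f(\hat w^{-1}nx)\,dn\Bigr)(1),
\]
where the inner integral is an element of $\sigma^{\hat w}$ (same underlying function space as $\sigma$, but with the twisted $T(\mathbb A_k)$-action). The right-hand side unfolds as
\[
M_w(\nu)f^*(x) = \int_{N_w(\mathbb{A}_k)} f^*(\hat w^{-1}nx)\,dn = \int_{N_w(\mathbb{A}_k)} \bigl(f(\hat w^{-1}nx)\bigr)(1)\,dn.
\]

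The central observation is that the linear functional $\mathrm{ev}_1\colon \varphi\mapsto \varphi(1)$ on the common underlying space of $\sigma$ and $\sigma^{\hat w}$ is invariant under $j_{\hat w}$: indeed, by the explicit formula $j_{\hat w}(\varphi)(t)=\varphi(\hat w^{-1}t\hat w)$ provided earlier, one computes
\[
\bigl(j_{\hat w}(\varphi)\bigr)(1) = \varphi(\hat w^{-1}\cdot 1\cdot \hat w) = \varphi(1),
\]
so $\mathrm{ev}_1\circ j_{\hat w}=\mathrm{ev}_1$. Applying this identity to the vector $\int_{N_w(\mathbb A_k)} f(\hat w^{-1}nx)\,dn \in \sigma^{\hat w}$ collapses the $j_{\hat w}$ in the expression for $(A_w(\nu)f)^*(x)$.

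It then remains to commute $\mathrm{ev}_1$ past the integral sign, which is justified because $\mathrm{ev}_1$ is a continuous linear functional on the representation space and the integral converges absolutely on the open region described in the preceding proposition; outside that region one appeals to the meromorphic continuation of Theorem 3.6 and the corresponding meromorphic continuation on the $I(\nu)$ side, so that the equality of meromorphic families of functions follows from their agreement on an open set. The only mildly delicate point, which I would address briefly, is the justification that $\mathrm{ev}_1$ commutes with the integral; this is either immediate from the definition of the vector-valued integral as a weak integral (so that pairing with any continuous linear functional passes through) or, alternatively, can be reduced to the scalar case by the right $K$-finiteness built into the definition of $C^\infty(N(\mathbb A_k)\underline T(k)\backslash G(\mathbb A_k))_{\nu\otimes\sigma}$. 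I expect no real obstacle beyond this technical point; the content of the lemma is essentially the trivial identity $\mathrm{ev}_1\circ j_{\hat w}=\mathrm{ev}_1$.
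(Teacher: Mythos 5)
Your proof is correct and takes essentially the same approach as the paper: both reduce the lemma to the observation that $(j_{\hat w}\varphi)(1)=\varphi(\hat w^{-1}\cdot 1\cdot \hat w)=\varphi(1)$, combined with commuting the evaluation functional past the (weak) vector-valued integral. The paper's proof is a four-line computation that does exactly this without spelling out the technical justification you add at the end.
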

\begin{proof}
\begin{align*}
&\:\:\:\:\:(A_w(\nu)f)^*(x)\\
&=\big(j_{\hat w}\int_{N_w(\mathbb A_k)} f(\hat w^{-1}nx)dn\big)(1)\\
&=\int_{N_w(\mathbb A_k)} (j_{\hat w}f(\hat w^{-1}nx))(1)dn\\
&=\int_{N_w(\mathbb A_k)} f(\hat w^{-1}nx)(1)dn\\
&=(M_w(\nu)f^*)(x)\\
\end{align*}

\end{proof}

\begin{cor}\label{3.8}
$$A_{w'}(\nu^w)\circ A_w(\nu)=A_{w'w}(\nu)$$
\end{cor}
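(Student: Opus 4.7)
\textbf{Proof plan for Corollary \ref{3.8}.}

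The strategy is to transfer the known cocycle identity for the scalar intertwining operators $M_w$ to the vector valued operators $A_w$, using the map $f \mapsto f^*$ introduced just above. First I would apply this map to both sides of the asserted equality. Using the preceding lemma once for the inner and once for the outer operator, the left hand side becomes
\[
\bigl(A_{w'}(\nu^w)\bigl(A_w(\nu)f\bigr)\bigr)^* \;=\; M_{w'}(\nu^w)\bigl(A_w(\nu)f\bigr)^* \;=\; M_{w'}(\nu^w)\,M_w(\nu)\,f^*,
\]
while the right hand side becomes $(A_{w'w}(\nu)f)^* = M_{w'w}(\nu) f^*$. The theorem of Moeglin--Waldspurger quoted above then gives the equality $M_{w'}(\nu^w)\circ M_w(\nu) = M_{w'w}(\nu)$ as meromorphic functions, so the two starred sides agree.

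Next I would upgrade this equality of scalar images to an equality of the original vector valued functions by verifying that the map $f\mapsto f^*$ is injective on $I(\nu)$. This uses only the identity established inside the proof of the previous lemma, namely
\[
f(k)(t) \;=\; \nu^{-1}(t)\,\delta(t)^{-1/2}\,f^*(tk) \qquad (t\in T(\mathbb A),\ k\in K(\mathbb A)),
\]
which recovers the value of $f(k) \in \sigma$ at every point $t$ from $f^*$. Combined with the Iwasawa decomposition $G(\mathbb A) = B(\mathbb A)K(\mathbb A)$ and the $(B,\sigma)$-covariance of $f$, this determines $f$ on all of $G(\mathbb A)$ from $f^*$, giving the desired injectivity. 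Applying this to the two functions obtained above yields the claimed identity
\[
A_{w'}(\nu^w)\circ A_w(\nu) \;=\; A_{w'w}(\nu).
\]

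There is no serious obstacle here — the work was done in the preceding lemma (compatibility of $A_w$ with $M_w$ via $f\mapsto f^*$) and in the MW95 theorem (meromorphic continuation and cocycle relation for $M_w$). The one mild subtlety worth flagging is that the identity is first obtained on the open set of $\nu\in X^*(T)\otimes\mathbb C$ where the defining integrals for $A_w$ and $A_{w'}$ converge absolutely, and is then propagated to a statement about meromorphic families of operators via the analytic continuation already provided for $M_w$; the tensor factorization $A_w(\nu) = \bigotimes_v A_w(\nu_v)$ of the previous proposition implicitly furnishes such a continuation on the $A$-side as well.
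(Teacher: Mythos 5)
Your proof is correct and is exactly the argument the paper leaves implicit: the corollary is stated without a proof body because it is meant to follow directly from the preceding lemma $(A_w(\nu)f)^* = M_w(\nu)f^*$ together with the Moeglin--Waldspurger cocycle identity for $M_w$, via the injectivity of $f\mapsto f^*$. You correctly supply the one detail the paper glosses over, namely that $f$ is recoverable from $f^*$ through the identity $f(k)(t)=\nu^{-1}(t)\delta(t)^{-1/2}f^*(tk)$ combined with the Iwasawa decomposition, which makes the starred equality imply the operator equality.
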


\subsection{Rank one group}
In this section, we assume that $\frakg^{der}\cong \mathfrak{sl}_2$ so $\mathfrak g=\mathfrak t \oplus \mathfrak g_{\alpha} \oplus \mathfrak g_{-\alpha}$ where $\frakt$ is the maximal abelian subalgebra of $\frakg$ and $\alpha$ is a root of $\frakg$. Let $\underline G$ be a connected reductive group with Lie algebra $\frakg$. Let $\underline T$ be the maximal torus of $\underline G$ with Lie algebra $\frakt$. Let $w$ be the nontrivial element in the Weyl group $W \cong \mathbb Z/2\mathbb Z$. There is a homomorphism from $SL_2$ to $\underline G$ which differentiates to the inclusion map among the Lie algebras $\frakg^{der} \to \mathfrak g$. We use $\underline h(t)$ to denote the image of $diag(t,t^{-1}) \in SL_2$. Let $\nu: \underline T(\mathbb A_k) \to \mathbb R_{>0}$ be a character such that $\nu(\underline h(t))=|t|_{\mathbb A}^s$ for some complex number $s$.
Let $v$ be a place of $k$, for generic $\nu$, $I(\nu_v)$ is irreducible, hence by Schur's lemma, $A(\nu_v^w) \circ A(\nu_v)$ is a constant, which only depends on $s$ and we denote by $\lambda_v(s)$. Define the Plancherel measure $$\mu_v(s)=\frac{1}{\lambda_v(s)}$$ By \ref{2.6}, 
\begin{align*}
\mu_v(s)&=d_{k_v}\frac{1-\nu(\underline h(\varpi_v^n))}{1-q_v^{-1}\nu(\underline h(\varpi_v^n))}\frac{1-\nu(\underline h(\varpi_v^{-n}))}{1-q_v^{-1}\nu(\underline h(\varpi_v^{-n}))}\\
&=d_{k_v}\frac{1-q_v^{-ns}}{1-q_v^{-ns-1}}\frac{1-q_v^{ns}}{1-q_v^{ns-1}}\\
\end{align*}

We record here a basic property of $\mu_v$:
\begin{prop}\label{3.9}
For any non-archimedean place $v$ of $k$, $\mu_{v}(s)$ is a rational function of $q_v^s$, $\mu_{v}(s)=\mu_{v}(-s)$ and $\mu_v(it)\geq 0$ for $t$ real. 
\end{prop}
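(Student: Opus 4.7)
The proof is an elementary inspection of the explicit formula for $\mu_v(s)$ already derived just before the statement, namely
$$\mu_v(s)=d_{k_v}\,\frac{1-q_v^{-ns}}{1-q_v^{-ns-1}}\cdot\frac{1-q_v^{ns}}{1-q_v^{ns-1}}.$$
The plan is to verify the three assertions in turn directly from this expression. First, since $q_v^{-ns-1}=q_v^{-1}q_v^{-ns}$ and $q_v^{ns-1}=q_v^{-1}q_v^{ns}$, every factor appearing in the numerator and denominator is a polynomial in $q_v^s$ and $q_v^{-s}$ with coefficients in $\bbQ$. Clearing $q_v^{-ns}$ from the first fraction shows $\mu_v(s)$ is a rational function of $q_v^s$.

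Next, setting $z = q_v^{ns}$, one rewrites
$$\mu_v(s) = d_{k_v}\,\frac{(1-z^{-1})(1-z)}{(1-q_v^{-1}z^{-1})(1-q_v^{-1}z)}.$$
Replacing $s$ by $-s$ sends $z$ to $z^{-1}$, which merely swaps the two factors and leaves the product unchanged. This establishes $\mu_v(s) = \mu_v(-s)$.

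Finally, for $s = it$ with $t \in \bbR$ we have $|z| = |q_v^{int}| = 1$, so $z^{-1} = \overline{z}$. Since $q_v^{-1}\in\bbR_{>0}$, the factor $(1-z^{-1})/(1-q_v^{-1}z^{-1})$ is the complex conjugate of $(1-z)/(1-q_v^{-1}z)$, and therefore
$$\mu_v(it) = d_{k_v}\,\left|\frac{1-q_v^{int}}{1-q_v^{-1+int}}\right|^{2} \;\geq\; 0,$$
using $d_{k_v}>0$. One should briefly note that the denominators do not vanish at $s = it$: since $|q_v^{-1 \pm int}| = q_v^{-1} < 1$, the quantities $1-q_v^{-1\pm int}$ are never zero, so the expression is finite and the inequality is strict unless $q_v^{int}=1$. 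There is no substantive obstacle; the only thing to be careful about is the bookkeeping of the swap $z \leftrightarrow z^{-1}$ and the observation that the two fractions are complex conjugates on the imaginary axis.
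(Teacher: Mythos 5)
The explicit formula
$$\mu_v(s)=d_{k_v}\,\frac{1-q_v^{-ns}}{1-q_v^{-ns-1}}\cdot\frac{1-q_v^{ns}}{1-q_v^{ns-1}}$$
that you use is derived from Proposition~\ref{2.6}, which computes the intertwining operator on the \emph{spherical} vector and therefore presupposes that $\sigma_v$ is unramified and that $I(\nu_v)^{K_v}\neq 0$. It is valid only for the non-archimedean places $v\notin S$. But Proposition~\ref{3.9} is asserted for \emph{every} non-archimedean place $v$, and it is subsequently invoked in the proof of the Main Theorem~\ref{main} precisely at the ramified places $v=p\in S$ (the ``$\mu_p(s)$ is a rational function of $q^s$'' and ``$\mu_p$ is positive on the imaginary axis'' steps). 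For those places no closed formula is available --- indeed, finding $\mu_p$ at the ramified places is the whole point of Section 4 --- so your calculation does not touch the case that actually matters.

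For the ramified places the three properties have to be extracted from the general theory of intertwining operators rather than from a formula. Rationality in $q_v^s$ comes from the existence of a rational normalization of the standard intertwining integral (this is exactly what the Appendix establishes in a form applicable to covering groups: for every constant section $f$ the map $\pi\mapsto a(\pi)\cdot\ell(\pi)(f)$ is a regular section for some rational $a$, and $A(\nu_v^w)\circ A(\nu_v)$ is a rational function of $q_v^s$). The functional equation $\mu_v(s)=\mu_v(-s)$ is a formal consequence of the definition: $A(\nu_v^w)\circ A(\nu_v)=\lambda_v(s)\,\mathrm{id}$ on $I(\nu_v)$ and $A(\nu_v)\circ A(\nu_v^w)=\lambda_v(-s)\,\mathrm{id}$ on $I(\nu_v^w)$; composing either identity with $A(\nu_v)$ on the appropriate side yields $\lambda_v(s)\,A(\nu_v)=\lambda_v(-s)\,A(\nu_v)$, and since $A(\nu_v)\neq 0$ for generic $\nu_v$ one gets $\lambda_v(s)=\lambda_v(-s)$ as rational functions. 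The positivity of $\mu_v(it)$ comes from unitarity: for $s=it$ the induced representation carries a $G(k_v)$-invariant inner product, and the adjoint of $A(it)$ with respect to it is $A(-it)$ (up to a positive constant), so $\lambda_v(it)^{-1}$ is realized as $\|A(it)\varphi\|^2/\|\varphi\|^2\geq 0$ for suitable $\varphi$. None of these arguments appear in your proposal. In short, your computation is a correct verification of the proposition for the unramified places, where it is already manifest, but it does not prove the statement that the paper actually needs.
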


\begin{prop}\label{3.10} Assume that for all $v\notin S$, $\sigma(h(t^n))=1$, $\forall t\in k_v^{\times}$. Then
$$\left(\prod_{v\notin S} d_{k_v} \right)\left(\prod_{v\in S}\mu_v(s)\right)\frac{L_S(ns+1)}{L_S(ns)}\frac{L_S(-ns+1)}{L_S(-ns)}=1$$ where $$L_S(s)=\prod_{v\notin S}\frac{1}{1-q_v^{-s}}$$ is a partial $L$-function which converges when $Re(s)>>0$ and has a meromorphic continuation to $\bbC$.
\end{prop}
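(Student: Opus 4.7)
The plan is to combine the global functional equation of intertwining operators with the local unramified computation in Proposition \ref{2.6}. By Corollary \ref{3.8} applied with $w' = w$, one has
$$A_w(\nu^w) \circ A_w(\nu) = A_1(\nu) = \mathrm{id}$$
as operators on $I(\nu)$. By the tensor product decomposition of the global intertwining operator (Proposition 3.6 above), this identity factors as the product of the local operators $A_w(\nu_v^w) \circ A_w(\nu_v)$. For generic $s$ each $I(\nu_v)$ is irreducible, so by Schur's lemma the local composition acts on $I(\nu_v)$ as the scalar $\lambda_v(s) = \mu_v(s)^{-1}$. Applying the global identity to a pure tensor whose $v$-component for $v \notin S$ is the spherical vector $f^o_{\nu_v}$, one therefore obtains the numerical identity $\prod_v \lambda_v(s) = 1$, understood as an identity of meromorphic functions of $s$.

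To rewrite this as the stated formula, I compute $\lambda_v(s)$ at unramified places using Proposition \ref{2.6}; the hypothesis $\sigma(\underline h(t^n)) = 1$ for $v \notin S$ is exactly what that proposition needs. In the rank one setting one has $\nu_v(\underline h(\varpi_v^n)) = q_v^{-ns}$ and $\nu_v^w(\underline h(\varpi_v^n)) = q_v^{ns}$, so that
$$\lambda_v(s) = c(\nu_v)\,c(\nu_v^w) = d_{k_v}^{-1}\,\frac{(1-q_v^{-ns-1})(1-q_v^{ns-1})}{(1-q_v^{-ns})(1-q_v^{ns})},$$
in agreement with $\mu_v(s)^{-1}$. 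Taking the product over $v \notin S$ and recognizing Euler products yields, formally,
$$\prod_{v \notin S} \lambda_v(s) = \Bigl(\prod_{v \notin S} d_{k_v}^{-1}\Bigr) \cdot \frac{L_S(ns)}{L_S(ns+1)} \cdot \frac{L_S(-ns)}{L_S(-ns+1)}.$$
Multiplying by $\prod_{v \in S} \lambda_v(s) = \prod_{v \in S} \mu_v(s)^{-1}$ and rearranging produces the claimed identity.

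The main obstacle is justifying the manipulation of these Euler products: the pairs $\{L_S(ns), L_S(ns+1)\}$ and $\{L_S(-ns), L_S(-ns+1)\}$ have disjoint regions of absolute convergence, so the identity cannot be read off directly on any single half-plane. This is resolved by working first in a region (say $\mathrm{Re}(s) \gg 0$) where one pair converges absolutely, interpreting the other pair by the meromorphic continuation of $L_S$, and then invoking the meromorphic continuation of $M_w(\nu)$ (Theorem 3.7) to extend the resulting identity to all $s \in \bbC$. The product $\prod_{v \notin S} d_{k_v}$ causes no trouble since $d_{k_v} = 1$ at all but finitely many places.
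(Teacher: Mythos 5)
Your proposal is correct and takes essentially the same approach as the paper: apply $A_w(\nu^w)\circ A_w(\nu)=\mathrm{id}$ (Corollary \ref{3.8}) to a restricted tensor with spherical components at $v\notin S$, evaluate the unramified local factors via Proposition \ref{2.6}, and assemble the Euler products. The only superficial difference is that the paper writes out the intermediate step of first applying $A(\nu)$, then $A(\nu^w)$, whereas you invoke Schur's lemma directly on each local composition; your extra remark about convergence and meromorphic continuation is a welcome clarification the paper leaves implicit.
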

\begin{proof}
Take $f_{\nu} \in I(\nu)=\bigotimes_v I(\nu_v)=\otimes_v f_{\nu}^v$ such that $f_{\nu}^v \in I(\nu_v)$ for $v \in S$ and that $f_{\nu}^v \in I(\nu_v)^{K_v}$ for $v \notin S$.
\begin{align*}
A(\nu)f_{\nu}&=\prod_{v\in S}A(\nu_v)f_{\nu}^v\prod_{v \notin S}A(\nu_v)f_{\nu}^v\\
&=\prod_{v\in S}A(\nu_v)f_{\nu}^v\cdot \prod_{v\notin S} d_{k_v}^{-1/2} \frac{L_S(ns)}{L_S(ns+1)}\prod_{v \notin S}f_{\nu^w}^v\\
\end{align*}

Letting $A(\nu^w)$ act on both sides, we obtain by \ref{3.8} that 
\begin{align*}
f_{\nu}&=\prod_{v\in S}\lambda_v(s)f_{\nu}^v\cdot \left(\prod_{v\notin S} d_{k_v}^{-1}\frac{L_S(ns)}{L_S(ns+1)}\frac{L_S(-ns)}{L_S(-ns+1)}\right)\prod_{v \notin S}f_{\nu}^v \\
& =\left(\prod_{v\in S}\lambda_v(s)\prod_{v\notin S} d_{k_v}^{-1}\frac{L_S(ns)}{L_S(ns+1)}\frac{L_S(-ns)}{L_S(-ns+1)}\right) f_{\nu}\\
\end{align*}
which proves the proposition.
\end{proof}

\section{A comparison method}

\subsection{A related linear reductive group}
Given a connected reductive group $\underline G$ defined over a local field $F$ and a central extension $G$ of $\underline G$ by $\mu_n$, we will define a linear reductive group that is closely related to $G$.
As in Section 2.1, let $X=X^*(\underline T)$, $Y=X_*(\underline T)$ and fix a non-degenerate symmetric Weyl group invariant bilinear form $(,)$ on $Y$ such that $(\alpha^{\vee},\alpha^{\vee})=2$ for all $\alpha^{\vee} \in \Phi^{\vee}$. We define, following \cite{Weis16}, Section 1, 
$$Y^{\sharp}=\{y\in Y|(y,y') \in n\mathbb Z, \forall y'\in Y \}$$
It is clear that $Y^{\sharp} \otimes F^{\times}\cong \pi(Z_T)$ with $\pi$ the covering map. 
Let $Y'=\frac{1}{n}Y^{\sharp}$ and let $X'$ be the dual lattice of $Y'$ under the natural perfect pairing $<,>: X_{\mathbb Q}\times Y_{\mathbb Q} \to \bbQ$.

\begin{lemma}
$\Phi \subset X'$, $\Phi^{\vee}\subset Y \subset Y'$.
\end{lemma}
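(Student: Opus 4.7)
The lemma splits into three containments, two of which are essentially tautological. The inclusion $\Phi^{\vee}\subset Y$ is built into the definition of the root datum $(X,\Phi,Y,\Phi^{\vee})$, since coroots live in the cocharacter lattice by construction. The inclusion $Y\subset Y'=\frac{1}{n}Y^{\sharp}$ is equivalent to $nY\subset Y^{\sharp}$: for any $y,y'\in Y$ we have $(ny,y')=n(y,y')\in n\mathbb{Z}$ because $(,)$ is integer-valued on $Y\times Y$, so indeed $ny\in Y^{\sharp}$.

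The substantive step is $\Phi\subset X'$. By definition of $X'$ as the $\mathbb{Z}$-dual of $Y'$ inside $X_{\mathbb{Q}}$, I need to verify, for each $\alpha\in\Phi$ and each $y'\in Y'$, that $\langle\alpha,y'\rangle\in\mathbb{Z}$. Writing $y'=\frac{1}{n}z$ with $z\in Y^{\sharp}$, this reduces to showing $\langle\alpha,z\rangle\in n\mathbb{Z}$ for every $z\in Y^{\sharp}$. The bridge between the pairing $\langle\,,\,\rangle$ and the symmetric form $(,)$ is the following standard identity, which I would derive from the two equivalent expressions for the reflection $w_{\alpha}$ acting on $Y_{\mathbb{Q}}$ (one using the root datum, the other using Weyl-invariance of $(,)$):
$$\langle\alpha,y\rangle=\frac{2(\alpha^{\vee},y)}{(\alpha^{\vee},\alpha^{\vee})}\qquad\text{for all }y\in Y_{\mathbb{Q}}.$$
The normalization $(\alpha^{\vee},\alpha^{\vee})=2$ collapses this to $\langle\alpha,y\rangle=(\alpha^{\vee},y)$. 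Since $\alpha^{\vee}\in Y$ and $z\in Y^{\sharp}$, the defining condition of $Y^{\sharp}$ immediately yields $(\alpha^{\vee},z)\in n\mathbb{Z}$, so $\langle\alpha,z\rangle\in n\mathbb{Z}$ and the inclusion follows.

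\textbf{Main obstacle.} There is no genuine obstacle; the only nontrivial ingredient is the identification $\langle\alpha,-\rangle=(\alpha^{\vee},-)$ that comes from comparing the two descriptions of $w_{\alpha}$. Once this is in place, each of the three inclusions reduces to a one-line check against the definitions of $Y^{\sharp}$ and $Y'$.
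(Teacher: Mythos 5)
Your proof is correct and follows essentially the same route as the paper's: $Y\subset Y'$ via $nY\subset Y^{\sharp}$, and $\Phi\subset X'$ by converting $\langle\alpha,\cdot\rangle$ to $(\alpha^{\vee},\cdot)$ using the normalization $(\alpha^{\vee},\alpha^{\vee})=2$ and then invoking the definition of $Y^{\sharp}$. The only difference is cosmetic: you spell out the derivation of $\langle\alpha,y\rangle=(\alpha^{\vee},y)$ from the two descriptions of the reflection $w_{\alpha}$, whereas the paper uses that identity silently.
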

\begin{proof}
Since $nY \subset Y^{\sharp}$, $Y \subset Y'$; For any $\alpha \in \Phi$ and $y'\in Y'$, write $y'=\frac{1}{n}y^{\sharp}$ for some $y^{\sharp}\in Y^{\sharp}$, we have $<\alpha,y'>=\frac{1}{n}<\alpha,y^{\sharp}>=\frac{1}{n}(\alpha^{\vee},y^{\sharp})$ which is in $\bbZ$ because of $\alpha^{\vee} \in Y$ and the definition of $Y^{\sharp}$. 
\end{proof}

It follows that $$(X',\Phi, Y', \Phi^{\vee})$$ is a root datum associated to some (linear) connected reductive group $G'$ with a maximal torus $T'$ such that $X'$ is the character lattice of $T'$, $Y'$ is the cocharacter lattice of $T'$, $\Phi=\Phi(G',T')$ is the root system of $G'$, $\Phi^{\vee}=\Phi^{\vee}(G',T')$ is the coroots of $G'$.
There is a group homomorphism $\underline T \to T'$ induced by the inclusion map $Y \subset Y'$. 

As in setting of Section 3.2, we form the principal series representation of $G(\mathbb A_k)$ induced from $\nu\otimes\sigma$ where $\nu$ is an unramified automorphic character of $\underline T(\mathbb A_k)$ and $\sigma$ is an irreducible genuine automorphic representation of $T$ with a Weyl group invariant genuine automorphic central character. We also consider the principal series representation of $G'(\mathbb A_k)$, denoted $I'(\nu)$, induced from $\nu\otimes\sigma'$ where $\sigma'$ denotes the trivial representation of $T'(\mathbb A_k)$. We compare the product formulas (\ref{3.10}) for these groups. Let us fix a long root $\alpha$, and suppose $\nu(\underline h_{\alpha}(t))=|t|_{\mathbb A_k}^s$ for some complex number $s$ depending on $\alpha$ and $\nu$. Let $\mu_{\alpha,v}(s)$ be the Plancherel measure of $I(\nu_v)$ corresponding to $\alpha$ and $\mu'_{\alpha,v}(s)$ be the Plancherel measure of $I'(\nu_v)$ corresponding to $\alpha$. We have for the covering group $G$:
$$\left(\prod_{v\in S}\mu_{\alpha,v}(s)\right)\left(\prod_{v\notin S} d_{k_v} \right)\frac{L_S(ns+1)}{L_S(ns)}\frac{L_S(-ns+1)}{L_S(-ns)}=1.$$  
On the other hand, we have for the associated linear group $G'$:
$$\left(\prod_{v\in S}\mu'_{\alpha,v}(s)\right)\left(\prod_{v\notin S} d_{k_v} \right)\frac{L_S(s+1)}{L_S(s)}\frac{L_S(-s+1)}{L_S(-s)}=1.$$

Therefore, after making a simple substitution $s\mapsto ns$ in the second equation, we obtain
\begin{thm}\label{4.2} Let $\nu$ be an unramified character of $\underline T(k) \backslash\underline{T}(\mathbb A_k)$. Let $\sigma$ be a Weyl group invariant genuine irreducible representation of $T(\mathbb A_k)$ such that for all $v\notin S$, $\sigma_v(h_{\alpha}(t^n))=1$, $\forall \alpha\in\Phi, \forall t\in k_v^{\times}$. Let $\alpha$ be a long root of $\Phi$ such that $\nu(h_{\alpha}(t))=|t|_{\mathbb A_k}^s$, then 
$$\prod_{v\in S}\mu_{\alpha,v}(s)=\prod_{v\in S}\mu'_{\alpha,v}(ns)$$
for all but finitely many $s \in \mathbb C$.
\end{thm}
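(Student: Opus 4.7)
The plan is to deduce the identity directly from \ref{3.10}, applied once to the covering group $G$ and once to its associated linear companion $G'$.

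First, I would fix the rank-one reduction. For a long root $\alpha$, the local intertwining operator $A_{w_\alpha}$ is controlled entirely by the $\mathrm{SL}_2$-type subgroup generated by $x_{\pm\alpha}$, so the local Plancherel measure $\mu_{\alpha,v}(s)$ is computed as in Section~3.3 via \ref{2.6}. The hypothesis $\sigma_v(h_\alpha(t^n))=1$ for $v\notin S$ is precisely the condition required to invoke \ref{2.6} (and hence \ref{3.10}) at almost every non-archimedean place. An analogous reduction, with the role of $h_\alpha$ played by the cocharacter of $T'$ associated to $\alpha^\vee\in Y\subset Y'$, is available on the $G'$ side.

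Next, I would apply \ref{3.10} to $G$ with the given $\sigma$ to obtain
\begin{equation*}
\Big(\prod_{v\in S}\mu_{\alpha,v}(s)\Big)\Big(\prod_{v\notin S} d_{k_v} \Big)\frac{L_S(ns+1)}{L_S(ns)}\frac{L_S(-ns+1)}{L_S(-ns)}=1,
\end{equation*}
and apply the same proposition to $G'$, regarded as its own trivial central extension (the case $n=1$) with $\sigma'=\mathbf{1}$, to obtain
\begin{equation*}
\Big(\prod_{v\in S}\mu'_{\alpha,v}(s)\Big)\Big(\prod_{v\notin S} d_{k_v} \Big)\frac{L_S(s+1)}{L_S(s)}\frac{L_S(-s+1)}{L_S(-s)}=1.
\end{equation*}
Substituting $s\mapsto ns$ in the second identity and dividing it by the first, the factor $\prod_{v\notin S} d_{k_v}$ and the entire ratio of partial $L$-functions cancel, leaving $\prod_{v\in S}\mu_{\alpha,v}(s)=\prod_{v\in S}\mu'_{\alpha,v}(ns)$. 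Since both sides are rational functions of the variables $q_v^{s}$ for $v\in S$ non-archimedean by \ref{3.9}, this equality of meromorphic functions holds for all but finitely many $s\in\bbC$.

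The main obstacle is not the algebra but pinning down the normalizations: one must verify that the parameter $s$ defined by $\nu(h_\alpha(t))=|t|_{\mathbb A_k}^s$ on the $\underline T$ side is the same parameter entering the rank-one identity for $G'$ under the torus map $\underline T\to T'$ induced by $Y\subset Y'$, and that the discriminants $d_{k_v}$ produced by the Haar-measure normalization in \ref{2.6} are identical on the two sides so that they cancel cleanly. Once these compatibilities are recorded, the proof reduces to this one-line comparison of global product formulas.
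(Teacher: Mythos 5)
Your proposal is correct and follows the same route as the paper: apply Proposition \ref{3.10} once to $G$ (where the partial $L$-functions appear with argument $ns$) and once to $G'$ viewed as a trivial extension (where they appear with argument $s$), substitute $s\mapsto ns$ in the latter, and divide so that the discriminant factors and $L$-function ratios cancel. The extra remarks on the rank-one reduction and on matching the parameter $s$ and the Haar-measure normalizations across the two sides are sensible sanity checks, but they do not change the argument, which is exactly the one-line comparison the paper records.
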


\subsection{Application to ramified representations of reductive groups over local fields}

\ref{4.2} gives us a way to go from those easy-to-compute Plancherel measures of linear groups to those unknown Plancherel measures of covering groups. We will make it precise in this section.

Let $p$ be any prime, and let $n$ be any positive integer. Let $F$ be a finite extension of $\mathbb Q_p$ containing $\mu_n$. Let $q=p^f$ be the order of its residue field $k_F$. Let $G$ be a $n$-fold cover of a split connected reductive group $\underline G$ defined over $F$. 
Suppose that we have an irreducible, genuine, Weyl group invariant representation $\sigma_p$ of the metaplectic torus $T$ and an unramified character $\nu_p$ of $\underline T$. Form the principal series representation of $G(F)$:
$$I(\nu_p)=Ind_B^G(\nu_p \otimes \sigma_p)$$ 
For any $w\in W$, let $$A_w(\nu_p):I(\nu_p) \to I(\nu_p^w)$$ be the standard intertwining operator. Then for generic $\nu_p$, $I(\nu_p),I(\nu_p^w)$ are irreducible and hence for any simple reflection $w=w_{\alpha}$, we have
$$A_{w_{\alpha}}(\nu_p^{w_{\alpha}})\circ A_{w_{\alpha}}(\nu_p)=1/\mu_{\alpha,p}(\nu_p)\cdot \mathrm{id}$$
Suppose that $\nu_p(\underline h_{\alpha}(t))=|t|_p^{s}$ for some $s \in \mathbb C$. Let $\mu_{\alpha,p}(s)=\mu_{\alpha,p}(\nu_p)$.

Our goal in this section is to find an explicit expression of $\mu_{\alpha,p}(s)$ in the case when the principal series $I(\nu_p)$ is ramified, i.e., $I(\nu_p)^{K_p}=0$. Let $k$ be a global field that has completions isomorphic to $F$ at places above $p$. Such $k$ is easy to be constructed. For instance, take $k_0$ to be a number field that has exactly one place $\frakp$ above $p$ such that $k_{\frakp}\cong F$. Then we can take $k$ to be the composite of $k_0$ and $\bbQ(\mu_n)$. The number field $k$ may have multiple places above $p$. Let $\chi_p$ be the central character of $\sigma_p$.

\begin{lemma}\label{4.3}
Let $v$ be a complex place $k$ and $\alpha$ be a simple root of $\underline G$. Suppose that $\nu_v(\underline h_{\alpha}(t))=|t|_{v}^s$ where $t\in\bbC$, $|\cdot|_v$ is the square of the complex absolute value and $s \in \bbC$. Then $\mu_{\alpha,v}(s)=-s^2/4\pi^2$.
\end{lemma}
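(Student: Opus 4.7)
The plan is to reduce the lemma to a direct Iwasawa computation on $SL_2(\mathbb{C})$, exploiting the fact that at a complex place the Hilbert symbol is trivial and the cover splits on the relevant rank-one subgroup.

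First, since every element of $\mathbb{C}^\times$ is an $n$-th power, $(\cdot,\cdot)_n\equiv 1$ on $\mathbb{C}^\times\times\mathbb{C}^\times$. By Item~1 of Section~2.1 the defining cocycle of the cover then vanishes on the preimage $T_v$ of $\underline T(\mathbb{C})$, which is therefore abelian, and the same argument shows the extension restricted to the $SL_2$-subgroup generated by $x_{\pm\alpha}(\mathbb{C})$ is split. On this rank-one subgroup $\sigma_v$ reduces to a character that can be absorbed into $\nu_v$, so the Plancherel scalar $\mu_{\alpha,v}(s)$ coincides with that of the ordinary $SL_2(\mathbb{C})$ principal series attached to $\nu(\diag(t,t^{-1}))=|t|_v^s$; the integral defining $A_{w_\alpha}$ runs over $N_{w_\alpha}(\mathbb{C})=x_\alpha(\mathbb{C})$, so the whole computation localizes inside this subgroup.

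Second, I compute $c(s):=(A_{w_\alpha}(\nu_v)f_\nu^o)(1)$ explicitly. Writing $g=w_\alpha^{-1}x_\alpha(x)$ for $x\in\mathbb{C}$ and using the Iwasawa decomposition $SL_2(\mathbb{C})=NAK$ with $K=SU(2)$, one finds $g=n\,a(t)\,k$ where $a(t)=\diag(t,t^{-1})$ and $t=(1+|x|^2)^{-1/2}$, whence
$$f_\nu^o(g)=(\nu\delta^{1/2})(a(t))=t^{2s+2}=(1+|x|^2)^{-s-1}.$$
With the Haar measure $dx=2\,dx_{\mathrm{Leb}}$ on $\mathbb{C}$ fixed in Section~3.2, polar coordinates give
$$c(s)=4\pi\int_0^\infty (1+r^2)^{-s-1}\,r\,dr=\frac{2\pi}{s}.$$
Therefore $\lambda_v(s)=c(s)\,c(-s)=-4\pi^2/s^2$, and $\mu_{\alpha,v}(s)=1/\lambda_v(s)=-s^2/(4\pi^2)$, as claimed.

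The main point to check is the rank-one reduction in the presence of the cover: one must verify that the splitting at the complex place is compatible with the adelic normalization of the isomorphism $j_{\hat w_\alpha}$ so that no extra constant contaminates $c(s)$. Once that bookkeeping is settled, the remaining computation is a textbook Iwasawa integral.
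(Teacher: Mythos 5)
The paper states Lemma~\ref{4.3} without proof, so there is nothing to compare against; it is simply the standard spherical Plancherel computation for $SL_2(\mathbb{C})$. Your reduction to that computation and your Iwasawa integral are correct: with $g=w_\alpha^{-1}x_\alpha(x)$ one gets the diagonal part $\diag(r,r^{-1})$ with $r=(1+|x|^2)^{-1/2}$, so $f_\nu^o(g)=(\nu\delta^{1/2})(\diag(r,r^{-1}))=r^{2s+2}=(1+|x|^2)^{-s-1}$; with the measure on $\mathbb{C}$ normalized to twice Lebesgue this gives $c(s)=2\pi/s$, hence $\lambda_v(s)=c(s)c(-s)=-4\pi^2/s^2$ and $\mu_{\alpha,v}(s)=-s^2/4\pi^2$.

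One step should be made precise, though. You say $\sigma_v$ on the rank-one torus ``can be absorbed into $\nu_v$.'' As stated this is not enough: a nontrivial unitary character of $\mathbb{C}^\times$ of the form $z\mapsto(z/|z|)^m$ cannot be absorbed into an unramified twist $|\cdot|_v^s$, and it would change the archimedean local factor. What actually saves the computation is Weyl invariance of the central character of $\sigma$: since $(\cdot,\cdot)_n$ is trivial on $\mathbb{C}^\times$, $T_v$ is abelian and $\sigma_v$ is a character with $\sigma_v(h_\alpha(z))=\sigma_v(h_\alpha(z^{-1}))$, hence $\sigma_v(h_\alpha(z))^2=1$, and because $\mathbb{C}^\times$ is divisible this forces $\sigma_v(h_\alpha(z))=1$ for all $z$. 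So $\sigma_v$ restricted to $h_\alpha(\mathbb{C}^\times)$ is genuinely trivial (and in particular spherical), which is what licenses the spherical computation. For the same reason $j_{\hat w}^v$ fixes the spherical line and contributes no extra scalar, so the bookkeeping concern you flag at the end is harmless.
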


\begin{thm}[Main Theorem]\label{main}
Let $\alpha\in\Delta$ be a long root. Suppose that there is a Weyl group invariant genuine ACC $\chi$ such that $\chi_v=\chi_p$ for all $v|p$, and for all but finitely many places $v$, $\chi_v(h_{\alpha}(t^n))=1$, $\forall \alpha\in\Phi, \forall t\in k_v^{\times}$. 
Suppose also that there is an unramified character $\nu$ of $\underline T(k)\backslash\underline T(\mathbb A_k)$ such that $\nu(\underline h_{\alpha}(t))=|t|_{\mathbb A}^s$. 
Then $$\mu_{\alpha,p}(s)=c\mu_{\alpha,p}'(ns)$$ for some constant $c$. Therefore, $\mu_{\alpha,p}(s)$ is a constant multiple of $$\frac{1-q^{-ns}}{1-q^{-ns-1}}\frac{1-q^{ns}}{1-q^{ns-1}}$$
\end{thm}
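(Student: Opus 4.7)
The plan is to apply Theorem \ref{4.2} to a carefully chosen global situation in which the given local data at $p$ appears at every place above $p$, and then to isolate the $p$-factor by explicitly computing the contributions at the other places in $S$. First I would construct a number field $k$ containing $\mu_n$ that is totally complex and has every place above $p$ with completion isomorphic to $F$; such $k$ exists by combining $\mathbb Q(\mu_n)$ (totally complex for $n\geq 3$; adjoin $i$ if $n=2$) with a Krasner-type approximation to pin down the local behavior at $p$. Let $r$ be the number of places of $k$ above $p$, and use the hypothesis to extend $\sigma_p$ and $\nu_p$ to global objects $\sigma$ (with central character $\chi$) and $\nu$.

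Let $S$ be the finite set consisting of the archimedean places $S_\infty$, the places $S_p$ above $p$, and the finitely many other finite places where $\chi$ is ramified or where $\chi_v(h_\alpha(t^n))\neq 1$. Theorem \ref{4.2} gives $\prod_{v\in S}\mu_{\alpha,v}(s)=\prod_{v\in S}\mu'_{\alpha,v}(ns)$. Each $v\in S_p$ contributes $\mu_{\alpha,p}(s)$ on the left and $\mu'_{\alpha,p}(ns)$ on the right, producing factors $\mu_{\alpha,p}(s)^{r}$ and $\mu'_{\alpha,p}(ns)^{r}$. By Lemma \ref{4.3}, applied both to $G$ and to $G'$ (the formula is the same because the Hilbert symbol, and hence the cover, is trivial at complex places), each $v\in S_\infty$ contributes $-s^2/4\pi^2$ on the left and $-(ns)^2/4\pi^2$ on the right, yielding an $s$-independent ratio $1/n^{2}$ per archimedean place.

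The main obstacle is the contribution of the remaining finite places in $S\setminus(S_\infty\cup S_p)$, where the local Plancherel measures at ramified representations have no simple closed form and could vary with $s$. The cleanest resolution is to choose the global data so that this set is empty: Proposition \ref{2.4} forces $\chi_v(h_\alpha(t^n))=1$ automatically whenever $\Phi\neq A_1$ and $\alpha$ is long, and the explicit constructions of genuine Weyl-invariant automorphic central characters (Section 5 in the simply-laced case) provide enough flexibility to arrange $\chi$ to be unramified outside $S_\infty\cup S_p$. Under such a choice $S=S_\infty\cup S_p$ and the identity reduces to
\[
\mu_{\alpha,p}(s)^{r}\bigl(-s^2/4\pi^2\bigr)^{|S_\infty|}\;=\;\mu'_{\alpha,p}(ns)^{r}\bigl(-(ns)^2/4\pi^2\bigr)^{|S_\infty|},
\]
which rearranges to $\bigl(\mu_{\alpha,p}(s)/\mu'_{\alpha,p}(ns)\bigr)^{r}=n^{2|S_\infty|}$. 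The quantity $\mu_{\alpha,p}(s)/\mu'_{\alpha,p}(ns)$ is a meromorphic function of $s$ whose $r$-th power is a nonzero constant, hence it is itself a constant $c$. Combined with the explicit rational expression for $\mu'_{\alpha,p}(ns)$ recorded in Section 3.3, this yields the claimed formula $\mu_{\alpha,p}(s)=c\,\mu'_{\alpha,p}(ns)$.
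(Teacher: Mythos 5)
Your plan is the right one — globalize, invoke Theorem~\ref{4.2}, evaluate archimedean factors by Lemma~\ref{4.3}, and try to isolate the factor at $p$ — and the final step (a meromorphic function whose $r$-th power is a nonzero constant is itself constant) is cleaner than the positivity argument the paper uses. However, the crux of the paper's proof is precisely the part you wave away: the finite places in $S$ not lying above $p$. You propose to choose the global character $\chi$ so that $S = S_\infty \cup S_p$, but this is not justified. The constructions of Section~5 (via Weil indices) are only carried out in the simply-laced case, they are unramified only at \emph{almost all} places (not at all places outside $S_\infty\cup S_p$ — ramification at places over~$2$, for instance, is a real possibility), and even where they apply you would need to match a \emph{given} local $\chi_p$ at every $v\mid p$ simultaneously with global unramifiedness elsewhere, which is a nontrivial strong-approximation-type claim you do not prove. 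If any $v\in S_f\setminus\{v\mid p\}$ remains, the factors $\mu_{\alpha,v}(s)$ at those places are unknown rational functions of $q_v^s$, and your identity no longer isolates $\mu_{\alpha,p}(s)^r$.

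The paper instead handles the general case directly by analyzing zeros and poles. By Proposition~\ref{3.9}, $\mu_p(s)$ is a rational function of $q^s$, so its zeros lie in cosets $\pm s_i + \tfrac{2\pi i}{f\log p}\,\bbZ$. The zeros of the right-hand side of the product formula come from the $\mu'_v(ns)$ and lie in sets of the form $\tfrac{2\pi i}{n f_v \log p_v}\,\bbZ$ for various primes $p_v$. A Pigeonhole argument shows each $s_i$ must lie in such a set for a single prime $p'$, and if $p'\neq p$ one gets a nontrivial $\bbQ$-linear relation between $\log p$ and $\log p'$, contradicting the linear independence of logarithms of distinct primes over $\bbQ$. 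Hence all zeros (and, symmetrically, poles) of $\mu_p(s)$ already lie among those of $\mu'_p(ns)$, forcing $\mu_p(s)^m$ to be a constant multiple of $\mu'_p(ns)^m$ without any hypothesis that $S_f\setminus\{v\mid p\}$ is empty. This transcendence-of-logarithms step is the essential idea missing from your proposal; without it, or without a genuine proof that a globally nice $\chi$ exists, the argument does not close.
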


\begin{proof}
This will follow from \ref{4.2}, together with an elementary argument we are going to present. Since we are fixing $\alpha$, we write $\mu_{v}$ in place of $\mu_{\alpha,v}$ for any $v\in\Sigma$, the set of places of $k$. We may assume that $k$ has no real embeddings. Let $m$ be the number of places of $k$ above $p$. For any complex place $v$, by \ref{4.3}, $\mu_{v}(ns)/\mu_{v}(s)$ is a constant. So \ref{4.2} and the assumption gives
$$\mu_p(s)^m\prod_{v\in S_f-p}\mu_v(s)=C\cdot\mu'_p(ns)^m\prod_{v\in S_f-p}\mu'_v(ns)\:\:\:\:\:\:(1)$$ for some constant $C$. Here $S_f-p$ denotes the set of finite places in $S$ that is not above $p$.

By \ref{3.9}, we may write (up to constant) that 
$$\mu_p(s)=\prod_i \frac{(a_i-q^{-s})(a_i-q^{s})}{(b_i-q^{-s})(b_i-q^{s})}$$ for finitely many complex numbers $a_i$, $b_i$, counting multiplicities. Choose for each $i$ a complex number $s_i$ so that $a_i=q^{s_i}$. Then the zeros of $\mu_p(s)$ are
$$\pm s_i+\frac{2\pi i}{\log q}\mathbb Z=\pm s_i+\frac{2\pi i}{f\log p}\mathbb Z$$
On the other hand, by \ref{2.6}, the zeros of the right hand side of (1) are
$$\frac{2\pi i}{n\log q_v}\mathbb Z=\frac{2\pi i}{nf_v\log p_v}\mathbb Z$$ for some $v \in S_f$.

We claim that 
$$s_i\in \frac{2\pi i}{nf\log p}\mathbb Z$$ for all $i$. 
In fact, for a fixed $i$, by Pigeonhole Principal, there is some prime $p'$ such that there are infinitely many elements in $s_i+\frac{2\pi i}{f\log p}\mathbb Z$ that belong to $\frac{2\pi i}{nf'\log p'}\mathbb Z$. 
If $p'\neq p$, take two distinct elements in 
$$\left(s_i+\frac{2\pi i}{f\log p}\mathbb Z\right)\cap\left(\frac{2\pi i}{nf'\log p'}\mathbb Z\right)$$ Because their difference also belongs to $\frac{2\pi i}{nf'\log p'}\mathbb Z$, we obtain a nontrivial $\bbQ$-linear relation among $\log p$ and $\log p'$, which is impossible since $\log p, \log p'$ are linearly independent over $\bbQ$. Therefore, $p'=p$ and in particular, $s_i\in \frac{2\pi i}{nf\log p}\mathbb Z$, proves the claim. 

It follows that $a_i^n=1$ and all the zeros of $\mu_p(s)$ belong to $$\frac{2\pi i}{nf\log p}\mathbb Z$$
By the same argument, $b_i^n=q$ and all the poles of $\mu_p(s)$ belong to $$\pm \frac{1}{n}+\frac{2\pi i}{nf\log p}\mathbb Z$$ Therefore the zeros and poles of $\mu_p(s)^m$ are contained in that of $\mu'_p(ns)^m$. It follows that $\mu_p(s)^m$ is a constant multiple of $\mu'_p(ns)^m$, i.e., $(\mu_p(s)/\mu'_p(ns))^m$ is a constant. On the other hand, by \ref{3.9}, both of $\mu_p(s)$ and $\mu'_p(ns)$ are positive on the imaginary axis, so $\mu_p(s)/\mu'_p(ns)$ must equal to a positive constant when restricted to the imaginary axis. It follows that $\mu_p(s)/\mu'_p(ns)$ is a constant.  
\end{proof}

\begin{remark}
In certain cases, the assumption in \ref{main} can be achieved using the construction of a Weyl group invariant genuine ACC in Section 5.
\end{remark}

\section{Construction of Weyl group invariant genuine automorphic central characters}

\subsection{$n$-fold cover of $SL_2$}

We let $m$ to be $n$ when $n$ is odd and $n/2$ when $n$ is even. Then $Y^{\sharp}=mY \cong m\mathbb Z$ and hence both $\underline T(F)$ and $\underline T^{\sharp}(F)$ are isomorphic to $F^{\times}$. The natural inclusion $Y^{\sharp} \to Y$ induces a map $\underline T^{\sharp}(F) \to \underline T(F)$, $t \to t^m$. The covering torus $T(F)$ is generated by symbols $h(t)$, $t\in F^{\times}$ satisfying $h(t)h(u)=(t,u)_nh(tu)$. Let $\pi: T(F)\to \underline T(F)$, $h(t) \mapsto t$ be the covering map. We define $T^{\sharp}(F)$ to be the set $F^{\times} \times \mu_n$ with group law
$$(t,\zeta)(t',\zeta')=(tt',\zeta\zeta')$$ when $n\equiv 0,\pm 1 (4)$;
$$(t,\zeta)(t',\zeta')=(tt',\zeta\zeta'(t,t')_2)$$ when $n\equiv 2(4)$. Under this group law, $T^{\sharp}(F)$ is an abelian group. Moreover, $T^{\sharp}(F) \to \underline T^{\sharp}(F)$, $(t,\zeta) \mapsto t$ is an $n$-to-$1$ homomorphism. We then define a map 
$T^{\sharp}(F) \to T(F)$, $(t,\zeta) \mapsto \zeta h(t^m)$, then it is a group homomorphism with image $Z(F)$, the center of $T(F)$. It follows that $Z(F) \cong T^{\sharp}(F)/(\mu_m \times 1)$.  

Let $\gamma: F^{\times}/(F^{\times})^2 \to \mu_{\infty}$ be a function such that
$$\gamma(a)\gamma(b)=(a,b)_F\gamma(ab)$$
and $$\gamma(1)=1,$$
where $(a,b)_F$ is the quadratic Hilbert symbol.

We now define a character of $T^{\sharp}(F)$ as follows, when $n\equiv 0,\pm 1 (4)$, $(t,\zeta) \mapsto \zeta$; when $n\equiv 2 (4)$, $(t,\zeta) \mapsto \zeta\gamma(t)$. Then it factors through $Z(F)$ because the Weil index $\gamma$ is trivial on $\mu_m=(\mu_n)^2$ when $n$ is even. It is easy to see that this character is Weyl group invariant. We denote this character by $\gamma^*_p$ where $p$ indicates the residual characteristics of $F$.

\begin{prop}
Any Weyl group invariant genuine character of $T^{\sharp}(F)$ is a product of $\gamma^*_p$ and a quadratic character.
\end{prop}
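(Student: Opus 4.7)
The plan is to write every Weyl-invariant genuine character of $T^\sharp(F)$ as $\gamma^*_p$ times a character pulled back from $F^\times$, and then observe that the Weyl-invariance constraint forces that pulled-back character to be quadratic. In other words, the genuine characters of $T^\sharp(F)$ form a torsor under the character group of $F^\times$, and the fixed base point $\gamma^*_p$ is itself Weyl-invariant, so the Weyl-invariant genuine characters form a torsor under the Weyl-invariant characters of $F^\times$, which are exactly the quadratic ones.

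First I would pin down the Weyl action on $T^\sharp(F)$. The nontrivial element $w \in W$ acts on $T(F)$ by $h(t) \mapsto h(t^{-1})$ and fixes $\mu_n$, and the identity $(t^m, t^{-m})_n = (t, t^{-1})_n^{m^2} = 1$ gives $h(t^{-m}) = h(t^m)^{-1}$. Hence on the image $Z(F) \subset T(F)$ of $T^\sharp(F)$, we have $\zeta h(t^m) \mapsto \zeta h(t^m)^{-1} = \zeta h(t^{-m})$. This lifts to the map $(t,\zeta) \mapsto (t^{-1},\zeta)$ on $T^\sharp(F)$; one checks directly that this is a group automorphism, the only nontrivial case being $n \equiv 2 \pmod 4$, which uses $(t^{-1}, t'^{-1})_2 = (t, t')_2$ by bilinearity of the Hilbert symbol.

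Now let $\chi$ be any Weyl-invariant genuine character of $T^\sharp(F)$ and set $\psi := \chi \cdot (\gamma^*_p)^{-1}$. Since both $\chi$ and $\gamma^*_p$ are genuine, both send $(1,\zeta) \mapsto \zeta$, so $\psi$ is trivial on the central subgroup $\{1\} \times \mu_n$. The quotient $T^\sharp(F)/(\{1\} \times \mu_n)$ is canonically $F^\times$ under the first projection, since $\{1\} \times \mu_n$ is central and the Hilbert cocycle $(t,t')_2$ disappears when we project to the first coordinate. Thus $\psi$ corresponds to a character of $F^\times$. The invariance of $\gamma^*_p$ under $w$ (for $n \equiv 2 \pmod 4$ this reduces to $\gamma(t^{-1}) = \gamma(t)$, which holds because $t^{-1}$ and $t$ lie in the same class of $F^\times/(F^\times)^2$, differing by the square $t^{-2}$) combined with the invariance of $\chi$ yields $\psi(t^{-1}) = \psi(t)$ for all $t \in F^\times$, so $\psi^2 = 1$. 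This gives $\chi = \gamma^*_p \cdot \psi$ with $\psi$ quadratic, as required.

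The only step requiring genuine care is checking that the naive formula $(t,\zeta) \mapsto (t^{-1},\zeta)$ really defines a group automorphism of $T^\sharp(F)$ in the twisted case $n \equiv 2 \pmod 4$; once this and the Weyl-invariance of $\gamma^*_p$ are in hand, the rest is bookkeeping about the torsor structure.
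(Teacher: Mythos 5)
Your proof is correct and follows essentially the same route as the paper's: form the ratio of the given Weyl-invariant genuine character with the Weyl-invariant base point $\gamma^*_p$, observe that it descends to a character of $F^\times$, and use $\psi(t)=\psi(t^{-1})$ to conclude it is quadratic. You supply more of the bookkeeping (explicitly lifting the Weyl action to $T^\sharp(F)$ and checking it is an automorphism in the $n\equiv 2\pmod 4$ case), but the underlying argument coincides with the paper's one-line computation.
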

\begin{proof}
Take two genuine Weyl group invariant characters $\chi,\chi'$, let $\varepsilon=\chi/\chi'$. Then $\varepsilon$ factors through $\underline T^{\sharp}=F^{\times}$. Since $\varepsilon$ is Weyl group invariant, $\varepsilon(t)=\varepsilon(t^{-1})$ and so $\varepsilon(t)^2=1$.
\end{proof}

Let $\nu_p: \underline T(F) \to \bbR_{>0}$ be the character defined by $\nu_p(t)=|t|_F^s$ for $t\in F^{\times}$. Let $\sigma_p$ be the genuine irreducible representation of $T(F)$ corresponding to $\gamma_p^*$. Put 
$$I(s)=I(\nu_p)=\Ind_B^G(\nu_p \otimes \sigma_p)$$

\begin{lemma}\label{5.2}
The assumptions in \label{4.4} are satisfied in the case of $n$-fold cover of $SL_2$.
\end{lemma}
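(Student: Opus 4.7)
The plan is to assemble the local characters $\gamma_v^*$ of Section 5.1 into a global Weyl-invariant genuine ACC and verify, one at a time, the hypotheses of \ref{main}.

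At each place $v$ of $k$, identify $Z_T(k_v)\cong T^\sharp(k_v)/(\mu_m\times 1)$ and let $\chi_v=\gamma_v^*$ be the local character from Section 5.1 built with the local Weil index $\gamma_v$. Set $\chi=\prod_v\chi_v$. This is well-defined as a character of $Z_T(\mathbb A_k)$ because at every non-dyadic place $v$ with $v\nmid n$ the Weil index $\gamma_v$ is trivial on $r_v^\times$, so $\chi_v$ vanishes on $Z_T(r_v)$; the same observation also yields the ``unramified almost everywhere'' axiom. Genuineness and Weyl-invariance are inherited directly from the local factors, and $\chi_v=\gamma_p^*$ for every $v\mid p$ by construction.

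The substantive step is triviality on $\underline T(k)\cap Z_T(\mathbb A_k)$. When $n$ is odd or $n\equiv 0\pmod 4$ the character $\chi_v$ is simply $(t,\zeta)\mapsto\zeta$, so $\chi_v$ equals $1$ on every element of the form $(t,1)$ and the condition holds trivially. When $n\equiv 2\pmod 4$ one has $m=n/2$ odd; Grunwald--Wang then identifies $\underline T(k)\cap Z_T(\mathbb A_k)$ with the image of $(s,1)\in T^\sharp(\mathbb A_k)$ for $s\in k^\times$, and the global value is $\prod_v\gamma_v(s)$. This product equals $1$ for $s\in k^\times$ by the classical reciprocity law for the Weil index, which is the main external input I would cite rather than reprove.

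Finally, I need the local condition $\chi_v(h_\alpha(t^n))=1$ for all $t\in k_v^\times$ at almost every $v$. When $n$ is odd, $h(t^n)=h(t^m)$ corresponds to $(t,1)\in T^\sharp(k_v)$, so $\chi_v=1$ by definition. When $n$ is even with $n=2m$, $h(t^n)$ corresponds to $(t^2,1)$. If $n\equiv 0\pmod 4$ this is again $1$ by definition; if $n\equiv 2\pmod 4$, the $T^\sharp$ group law and the Weil cocycle give
\[\chi_v((t^2,1))=\gamma_v(t)^2\,(t,t)_v^{-1}=\gamma_v(t^2)=1,\]
the last equality because $\gamma_v$ factors through $k_v^\times/(k_v^\times)^2$. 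Taking $\nu$ to be the adelic unramified character $\nu(h_\alpha(t))=|t|_{\mathbb A_k}^s$ then supplies the remaining hypothesis of \ref{main}.
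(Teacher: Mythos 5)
Your proof follows essentially the same route as the paper: build $\gamma_{\mathbb A}^*=\prod_v\gamma_v^*$ from the local Weil indices, note $\gamma_v^*=\gamma_p^*$ for $v\mid p$, and check the almost-everywhere condition $\gamma_v^*(h(t^n))=1$. The paper compresses the global ``trivial on $\underline T(k)$'' step into one line, implicitly invoking Weil's product formula $\prod_v\gamma_v(s)=1$ for $s\in k^\times$; you unpack it. Your unpacking is workable, but two remarks are worth making.

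First, the Grunwald--Wang citation to identify $\underline T(k)\cap Z_T(\mathbb A_k)$ with $\{(s,1):s\in k^\times\}$ is somewhat tangential and you should be careful when $n\equiv 0\pmod 4$: there $m=n/2$ is even, which is exactly the regime where Grunwald--Wang has exceptional cases, so the clean identification may fail. Fortunately your conclusion survives anyway, because for $n\not\equiv 2\pmod 4$ the character $(t,\zeta)\mapsto\zeta$ only sees the $\mu_n$-part, and for a lift of a $k$-rational point the $\mu_n$-components multiply to $1$ by the very definition of $G(\mathbb A_k)=\mathcal G/\mathcal Z_0$ together with genuineness. Stating it that way avoids the Grunwald--Wang issue entirely. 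Second, your verification of $\chi_v(h(t^n))=1$ via the $T^\sharp$ cocycle, $\gamma_v(t)^2(t,t)_v^{-1}=\gamma_v(t^2)=1$, is a correct but slightly roundabout version of the paper's observation that $t^n$ is a square for $n$ even and $\gamma_v$ factors through $k_v^\times/(k_v^\times)^2$; both give the same answer. Net: same strategy, more detail, one point to tighten in the $n\equiv 0\pmod 4$ case.
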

\begin{proof}
We take a system $\{\psi_v\}_v$ of additive characters of $k_v$, and let $\gamma_{v}=\gamma_{\psi_v}$ be the Weil index attached to the quadratic form $x^2-y^2$ and $\psi_v$. Let $\gamma_v^*$ be the genuine central character of $T^{\sharp}(k_v)$ associated to $\gamma_v$ as described above. The tensor product
$$\gamma_{\mathbb A}^*:=\prod_v \gamma_v^*$$ defines a genuine character of the center $\underline T(k) \backslash T^{\sharp}(\mathbb A_k)$. By construction, $\gamma_v^*=\gamma_p^*$ for any $v|p$. When $n$ is odd, $\gamma_v^*(h(t^n))=1$ for $t\in k_v^*$ by definition of $\gamma_v^*$; when $n$ is even, this is also true since $t^n$ is a square in $k_v^*$ and Weil index is trivial on squares. 
\end{proof}

\begin{cor}\label{5.3}
$I(0)$ is irreducible, $I(1/n)$ and $I(-1/n)$ are reducible.
\end{cor}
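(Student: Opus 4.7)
The plan is to combine Lemma~\ref{5.2} with Theorem~\ref{main} to make the Plancherel measure $\mu_p(s)$ explicit and then read off reducibility from its zeros and poles. By Lemma~\ref{5.2} the hypotheses of Theorem~\ref{main} are satisfied, so
$$\mu_p(s)\;=\;c\cdot\frac{(1-q^{-ns})(1-q^{ns})}{(1-q^{-ns-1})(1-q^{ns-1})}$$
for a nonzero constant $c$. This has a simple zero at $s=0$ and simple poles at $s=\pm 1/n$ modulo the period $\tfrac{2\pi i}{n\log q}$.

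For the reducibility at $s=\pm 1/n$, I would argue directly from Proposition~\ref{2.6}. Writing $\nu_s$ for the unramified character with $\nu_s(h(\varpi^n))=q^{-ns}$, the spherical vector satisfies $A_{w_\alpha}(\nu_s)f^o_s=c(\nu_s)f^o_{-s}$ where $c(\nu_s)=d_F^{-1/2}(1-q^{-1-ns})/(1-q^{-ns})$. At $s=-1/n$ one has $c(\nu_{-1/n})=0$, so $f^o_{-1/n}$ lies in $\ker A_{w_\alpha}(\nu_{-1/n})$; since $A_{w_\alpha}(\nu_{-1/n})$ is nonzero on another Iwahori-fixed vector, this kernel is a nonzero proper $G$-submodule of $I(-1/n)$, and $I(-1/n)$ is reducible. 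For $I(1/n)$ one uses the local identity $A_{w_\alpha}(\nu_{-1/n})\circ A_{w_\alpha}(\nu_{1/n})=\lambda_p(1/n)\cdot\mathrm{id}=0$ (since $\mu_p(1/n)=\infty$); the image of $A_{w_\alpha}(\nu_{1/n})$ thus sits inside the proper submodule $\ker A_{w_\alpha}(\nu_{-1/n})\subsetneq I(-1/n)$, preventing surjectivity and contradicting a hypothetical irreducibility of $I(1/n)$.

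For the irreducibility at $s=0$, I would normalize the intertwining operator. At $s=0$ we have $\nu_p=\nu_p^w$ and $c(\nu_p)$ has a simple pole, so $\tilde A(s)=(1-q^{-ns})A_{w_\alpha}(\nu_s)$ is a holomorphic endomorphism of $I(0)$ with $\tilde A(0)f^o=d_F^{-1/2}(1-q^{-1})f^o$ and $\tilde A(0)^2=d_F^{-1}(1-q^{-1})^2\cdot\mathrm{id}$. If $I(0)=\bigoplus_i \pi_i$ is any decomposition into irreducibles, Schur's lemma forces $\tilde A(0)|_{\pi_i}=\pm d_F^{-1/2}(1-q^{-1})\cdot\mathrm{id}$; the spherical summand has sign $+$, so irreducibility of $I(0)$ reduces to ruling out a $-$-eigenspace.

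The main obstacle is this last step. The Plancherel formula alone gives only $\tilde A(0)^2\propto \mathrm{id}$, which does not a priori preclude an additional non-spherical irreducible summand on which $\tilde A(0)$ acts by $-d_F^{-1/2}(1-q^{-1})$. One would verify triviality of the Knapp--Stein $R$-group of $\sigma_p$, either by an explicit computation of $\tilde A(0)$ on a second Iwahori-fixed vector of $I(0)$, or by comparison with the associated linear group $G'$ of Section~4.1 (which is $SL_2$ or $PGL_2$ depending on the parity of $n$), where triviality of the $R$-group at the unramified point is classical.
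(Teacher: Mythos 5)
There is a genuine gap, and it is not small. The formula $\mu_p(s)=c\,(1-q^{-ns})(1-q^{ns})/[(1-q^{-ns-1})(1-q^{ns-1})]$ is correct (though note the zero at $s=0$ is a \emph{double} zero, not a simple one). The problem is with everything after that. Your reducibility argument at $s=\pm 1/n$ is built entirely on Proposition~\ref{2.6} and the spherical vector $f^o_s$, but $\sigma_p$ here is the genuine representation attached to a Weil-index character $\gamma_p^*$, which is in general ramified (e.g.\ for $p\mid 2n$), so $I(\nu_p)^{K_p}=0$ and there is no spherical vector to run the argument on. Indeed, the entire point of the comparison method in Section~4 was to reach the ramified case, where Proposition~\ref{2.6} is not available locally. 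Your argument is therefore restricted to a special case and does not establish the corollary as stated.

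The paper's route avoids this entirely: once $\mu_p(s)$ is known, reducibility at $s=\pm 1/n$ follows from the (appendix) fact that for non-unitary inducing data $\pi$, $I_P^G(\pi)$ is reducible if and only if $\mu^{-1}(\pi)=0$ --- and $\mu_p^{-1}(\pm 1/n)=0$ since $\mu_p$ has a pole there. For irreducibility at $s=0$ the paper invokes Proposition~\ref{6.3}: if $\mu^{-1}(s)$ has a pole at $s=0$ (which it does, of order two), then $I(0)$ is irreducible. This is exactly the missing ingredient you flag at the end --- the $R$-group / $\pm$-eigenspace issue --- and it is handled in the appendix by a Jacquet-module argument showing that a pole of the normalized intertwining operator at $s=0$ forces the exact sequence $0\to\delta^{1/2}\pi\to I(\pi)_N\to\delta^{1/2}\pi\to 0$ not to split, hence irreducibility. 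You should cite Proposition~\ref{6.3} and the preceding appendix proposition rather than attempting a spherical-vector computation or an $R$-group calculation from scratch.
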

\begin{proof}
Theorem \label{4.4} implies that $$\mu_p(s)=c\frac{1-q^{-ns}}{1-q^{-ns-1}}\frac{1-q^{ns}}{1-q^{ns-1}}$$ for some constant $c$. In particular, it has a double zero at $s=0$ and has a simple pole at $s=\pm 1/n$. It follows that $I(1/n)$ and $I(-1/n)$ are reducible. By \label{6.3} in the appendix, $I(0)$ is irreducible since $\mu_p^{-1}(s)$ has a pole at $s=0$.
\end{proof}

\subsection{Two-fold cover of simply-connected Chevalley group of type ADE}
In this section, we assume that $\underline G$ is a simply-connected, simply-laced Chevalley group defined over a local field $F$. Let $G$ be its nontrivial two-fold central extension. Let $T$ be the metaplectic torus of $G$ generated by $h_{\alpha}(t)$ for $\alpha\in\Phi$, $t\in F^{\times}$. Let $Z_T$ be the center of $T$ and $T^2$ be the subgroup of $Z_T$ generated by $h_{\alpha}(t^2)$ for $t\in F^{\times}$, $\alpha\in \Phi$.

The following proposition follows directly from Proposition 4.2 of \cite{Sav04}.
\begin{prop}
Let $S$ be a subset of vertices of the Dynkin diagram of $G$ such that no two vertices in $S$ are adjacent and every vertex not in $S$ is adjacent to an even number of vertices in $S$. 
Then there is no nonempty $S$ if $\Phi$ is of type $A_{2n},E_6,E_8$, there is exactly one nonempty $S$ when $\Phi$ is of type $A_{2n-1},D_{2n-1},E_7$ and there are three nonempty $S$ when $\Phi$ is of type $D_{2n}$. Moreover, $$Z(T)/T^2$$ is generated by 
$$h_S(t)=\prod_{\alpha\in S}h_{\alpha}(t)$$ $t\in F^{\times}/(F^{\times})^2$.
\end{prop}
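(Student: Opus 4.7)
The statement is a direct invocation of Proposition 4.2 of \cite{Sav04}, so the plan is to recall the short derivation and perform the combinatorial bookkeeping on each simply-laced Dynkin diagram.

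First I would translate centrality modulo $T^2$ into a parity condition on $S$. Specializing the commutator formula from the remark after Lemma 2.3 to $n=2$ in the simply-laced case gives $[h_\alpha(t), h_\beta(u)] = (t,u)_2^{(\alpha^\vee, \beta^\vee)}$. Consequently $h_S(t) = \prod_{\alpha \in S} h_\alpha(t)$ lies in $Z(T)$ if and only if $\sum_{\alpha \in S}(\alpha^\vee, \beta^\vee)$ is even for every simple $\beta$. In simply-laced type the pairing $(\alpha^\vee, \beta^\vee)$ equals $2$ when $\alpha = \beta$, $-1$ when $\alpha$ is adjacent to $\beta$ in the Dynkin diagram, and $0$ otherwise, so the parity condition says exactly that each $\beta \in \Delta$ has an even number of neighbors in $S$ (not counting $\beta$ itself).

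Next I would normalize $S$ to be independent. Lemma 2.3 with $n_\beta = 1$ gives $h_{\alpha+\beta}(t) \equiv h_\alpha(t) h_\beta(t) \pmod{T^2}$ whenever $\alpha, \beta$ are adjacent simple roots, since the Hilbert-symbol prefactor is $\mu_2$-valued and is absorbed into $T^2$. Iterating this identity collapses any adjacent pair inside $S$, so modulo $T^2$ we may assume $S$ contains no two adjacent vertices. Under independence the parity condition at any $\beta \in S$ becomes vacuous, and the statement reduces to the two conditions listed in the proposition.

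The last step is a finite check on each simply-laced Dynkin diagram, which is where the main labor lies. On $A_n$, propagating the constraint from one endpoint forces $S$ to be the alternating set starting there, which is admissible iff $n$ is odd; this yields the counts $0$ and $1$ for $A_{2n}$ and $A_{2n-1}$ respectively. On $E_6, E_7, E_8$ a direct inspection gives $0, 1, 0$. On $D_n$, a short case analysis at the trivalent node produces one admissible $S$ when $n$ is odd and three when $n$ is even. That the $h_S(t)$ generate all of $Z(T)/T^2$ then follows from the fact that every element of $T$ is a product of $h_\alpha(t)$'s, and the above classifies precisely which supports can produce a central element. The main obstacle I expect is the $D_{2n}$ case: one must check that the three admissible $S$ really give distinct cosets in $Z(T)/T^2$, which amounts to verifying that no nontrivial $\bbZ/2$-combination of the corresponding $h_S(t)$ lies in $T^2$.
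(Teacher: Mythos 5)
The paper itself gives no proof of this statement; it is dispatched by the single remark that it ``follows directly from Proposition 4.2 of \cite{Sav04}.'' So your write-up is doing work the paper does not. Your first step is correct and is the heart of the matter: the commutator relation $[h_\alpha(t),h_\beta(u)]=(t,u)_2^{(\alpha^\vee,\beta^\vee)}$ reduces the question of which $h_S(t)$ lie in $Z(T)$ (modulo $T^2$) to whether the indicator vector of $S$ lies in the kernel of the Cartan matrix modulo $2$, and the counts and generation statement then follow from a finite diagram-by-diagram check, exactly as you plan.

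The step that does not hold up is the ``normalize $S$ to be independent'' paragraph. Lemma \ref{2.3} expresses $h_{\alpha+\beta}(t)$ in terms of $h_\alpha(t)h_\beta(t)$, but $\alpha+\beta$ is not a simple root, so replacing $h_\alpha h_\beta$ by $h_{\alpha+\beta}$ leaves the class of products $h_S(t)$ indexed by subsets $S$ of the Dynkin diagram; re-expanding $h_{\alpha+\beta}$ back in simple $h_\gamma$'s just returns $h_\alpha h_\beta$, so the ``iteration'' is circular and never manufactures an independent $S$. In addition, ``the Hilbert-symbol prefactor is absorbed into $T^2$'' presupposes $\mu_2\subseteq T^2$, which does not follow from the definition of $T^2$ as the subgroup generated by the $h_\alpha(t^2)$ (the covering projection is injective on that subgroup, since all the relevant symbols $(t^2,u^2)_2$ vanish). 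The fact that rescues the argument is combinatorial and should replace that paragraph: on a tree, every nonzero $S$ in $\ker(C\bmod 2)$ is automatically independent. Indeed, if the subgraph induced on $S$ had an edge, take a connected component with at least two vertices; as a subtree it has a leaf $\alpha\in S$, and $\alpha$ then has exactly one neighbour in $S$, violating the parity constraint at $\alpha$. With that substitution the rest of your plan — the finite check on $A_n$, $D_n$, $E_6$, $E_7$, $E_8$, and the verification that the three $S_i$ in type $D_{2n}$ are the three nonzero vectors of a two-dimensional $\mathbb{F}_2$-space and hence give distinct cosets — goes through.
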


Let us label the Dynkin diagram of $D_{2n}$ as follows: we label the nodes from the very left end to the conjunction by $1,2,\cdots,2n-2$ respectively, and we label the node at the top right of the conjunction by $2n-1$, the node at the bottom right of the conjunction by $2n$. The the three nonempty $S$ are 
$$S_1=\{ 1,3,\cdots,2n-3,2n-1 \}$$
$$S_2=\{ 1,3,\cdots,2n-3,2n \}$$
$$S_3=\{ 2n-1,2n \}$$
Thus every element in $Z_T/T^2$ is of the form
$$\pm h_{S_1}(t_1)h_{S_2}(t_2)h_{S_3}(t_3)$$

\begin{cor}\label{5.5}
When $\Phi$ is of type $A_{2n},E_6,E_8$, $Z_T/T^2$ is trivial; When $\Phi$ is of type $A_{2n-1},D_{2n-1},E_7$, $Z_T/T^2$ is generated by $\mu_2$ and $h_S(t)$, $t \in F^{\times}/(F^{\times})^2$ satisfying $h_S(t)h_S(u)=(t,u)^{|S|}h(tu)$; When $\Phi$ is of type $D_{2n}$, $Z_T/T^2$ is generated by $h_{S_1}(t)$, $h_{S_2}(t)$, $h_{S_3}(t)$.
\end{cor}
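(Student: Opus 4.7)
My plan is to derive Corollary~5.5 as a direct unpacking of the preceding proposition by (i) writing down the explicit admissible subsets $S$ in each simply-laced type and (ii) verifying the relation $h_S(t) h_S(u) = (t,u)_2^{|S|} h_S(tu)$ by a short computation using the Steinberg relations recorded in Section~2.1.

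For step (i), the preceding proposition tells us the number of nonempty admissible $S$ is $0, 1, 1, 1, 3$ in types $A_{2n}/E_6/E_8$, $A_{2n-1}$, $D_{2n-1}$, $E_7$, and $D_{2n}$ respectively, where admissibility means no two vertices of $S$ are adjacent and every vertex not in $S$ has an even number of neighbors in $S$. The explicit descriptions can be read off by a small combinatorial walk on the Dynkin diagram: for $A_{2n-1}$ one obtains $S = \{\alpha_1, \alpha_3, \dots, \alpha_{2n-1}\}$ of size $n$; for $D_{2n-1}$ one obtains $S = \{\alpha_{2n-2}, \alpha_{2n-1}\}$, the two tines at the fork, of size $2$; for $E_7$ one obtains a set of size $3$ (one end of the long arm together with the branch node and one node on the opposite arm); and for $D_{2n}$ one obtains the three sets $S_1, S_2, S_3$ listed just before the corollary. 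In each case the other conceivable choices are ruled out by checking that some vertex outside $S$ would acquire an odd number of $S$-neighbors. The absence of a nonempty $S$ in types $A_{2n}, E_6, E_8$ likewise falls out of such a walk, starting from a putative leaf in $S$ and propagating the parity condition until it fails.

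For step (ii), I use that no two vertices of $S$ are adjacent, so $(\alpha^\vee, \beta^\vee) = 0$ for distinct $\alpha, \beta \in S$ in the simply-laced case, and hence the commutator formula $[h_\alpha(t), h_\beta(u)] = (t,u)_2^{(\alpha^\vee,\beta^\vee)}$ from Section~2.1 shows that $h_\alpha(t)$ and $h_\beta(u)$ commute. Therefore $h_S(t) = \prod_{\alpha \in S} h_\alpha(t)$ is order-independent, each $h_\alpha(t)$ in the first factor can be brought next to its counterpart in the second factor at no cost, and
\[
h_S(t) h_S(u) = \prod_{\alpha \in S} h_\alpha(t) h_\alpha(u) = \prod_{\alpha \in S} (t,u)_2 \, h_\alpha(tu) = (t,u)_2^{|S|} h_S(tu).
\]
Combining with the preceding proposition yields the three cases of the corollary. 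The explicit inclusion of $\mu_2$ as a generator in the $A_{2n-1}, D_{2n-1}, E_7$ case reflects the fact that the image of $\mu_2$ in $Z_T/T^2$ is not always captured by squaring $h_S(t)$: when $|S|$ is even the relation above gives $h_S(t)^2 \equiv h_S(t^2) \pmod{T^2}$ with no scalar left over, so $\mu_2$ must be added separately.

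The main obstacle I expect is the explicit identification and uniqueness of $S$ in types $E_7$ and $D_{2n}$, which requires checking several candidate subsets of the Dynkin diagram instead of invoking a single pattern. This is, however, a finite combinatorial verification once the labeling of simple roots is fixed, and the bookkeeping is controlled by the fact that in $D_{2n}$ the $\mathbb{Z}/2\mathbb{Z}$ symmetry swapping the two tines already produces two of the three admissible subsets.
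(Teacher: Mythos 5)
Your approach is the intended one (the paper treats the corollary as an immediate unpacking of the preceding proposition, which it cites from Savin), and step (ii) is correct: since $S$ is an independent set in the Dynkin diagram, $(\alpha^\vee,\beta^\vee)=0$ for distinct $\alpha,\beta\in S$ in the simply-laced case, so the commutator relation makes all the factors of $h_S(t)$ and $h_S(u)$ commute, and the single-root identity $h_\alpha(t)h_\alpha(u)=(t,u)_2 h_\alpha(tu)$ then yields $h_S(t)h_S(u)=(t,u)_2^{|S|}h_S(tu)$. Your enumerations of the admissible sets in types $A_{2n-1}$ and $D_m$ are also right, and the verification that the parity condition propagates along the stem is exactly the combinatorial walk one needs.

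However, your description of $S$ in type $E_7$ is wrong: the branch node is never in $S$. If the branch node $b$ were in $S$, then the endpoint of the length-one arm would lie outside $S$ with $b$ as its only neighbor, giving exactly one $S$-neighbor, which is odd — a contradiction. In Bourbaki labeling (nodes $1,3,4,5,6,7$ in a line, node $2$ hanging off $4$) the unique nonempty admissible set is $\{2,5,7\}$: the dangling short-arm node, together with the second and fourth nodes of the long arm counted from the branch. The size is $3$ as you said, so the corollary's statement is unaffected, but the verbal description should be corrected. I would also flag that your closing remark about $\mu_2$ is only a heuristic: the parity of $|S|$ tells you whether $h_S(t)^2$ lands in $T^2$ without a scalar, but the corollary lists $\mu_2$ as a generator in the entire $A_{2n-1},D_{2n-1},E_7$ case independently of that parity, and asserts triviality in the $A_{2n},E_6,E_8$ case; both of those facts have to be read off directly from the proposition's full description of $Z_T/T^2$, not deduced from the multiplication rule alone.
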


\begin{prop}\label{5.6}
Let $\gamma:F^{\times}/(F^{\times})^2 \to \mu_4$ be a function such that $$\gamma(t)\gamma(u)=(t,u)\gamma(tu)$$ and $$\gamma(1)=1.$$ (For instance, the normalized Weil index attached to the quadratic form $x^2-y^2$ and an additive character $\psi: F \to S^1$) 
Define $$\gamma^*(\epsilon h_S(t))=\epsilon\gamma(t)^{|S|}$$
Then $\gamma^*$ is a genuine character of $Z_T$.
\end{prop}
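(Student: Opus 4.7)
My plan is to use the concrete description of $Z_T$ from Corollary~\ref{5.5}: it tells us $Z_T$ is generated by $\mu_2$, the element(s) $h_S(t)$ for the allowed subsets $S$ (zero, one, or three in number, depending on the Dynkin type), and the subgroup $T^2$. I will extend the given formula by declaring $\gamma^*$ to be trivial on $T^2$, and, in the $D_{2n}$ case, by using multiplicativity across the three generators, $\gamma^*(\epsilon\,h_{S_1}(t_1)h_{S_2}(t_2)h_{S_3}(t_3))=\epsilon\prod_i\gamma(t_i)^{|S_i|}$. It then remains to check that every defining relation in $Z_T$ is respected by this extension.

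The central computation is obtained by raising the cocycle identity $\gamma(t)\gamma(u)=(t,u)\gamma(tu)$ to the $|S|$-th power inside the abelian group $\mu_4$, giving
\[
\gamma(t)^{|S|}\gamma(u)^{|S|} \;=\; (t,u)^{|S|}\gamma(tu)^{|S|}.
\]
This is exactly the image under $\gamma^*$ of the relation $h_S(t)h_S(u)=(t,u)^{|S|}h_S(tu)$, so $\gamma^*$ respects that relation for each allowed $S$. Genuineness $\gamma^*(\zeta)=\zeta$ for $\zeta\in\mu_2$ is immediate from $h_S(1)=1$ (which follows from each $h_\alpha(1)=1$) and $\gamma(1)=1$. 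The triviality on $T^2$ is internally consistent because $\gamma(t^2)=\gamma(1)=1$, while $\gamma(t)^2=(t,t)\gamma(t^2)=(t,t)$ matches the generator-level identity $h_\alpha(t)^2=(t,t)h_\alpha(t^2)$.

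The most delicate case will be $D_{2n}$. First I will verify that $h_{S_1},h_{S_2},h_{S_3}$ pairwise commute in $T$: for $\alpha\in S_i$ and $\beta\in S_j$ with $i\neq j$, the commutator $[h_\alpha(t),h_\beta(u)]=(t,u)^{(\alpha^\vee,\beta^\vee)}$ is trivial because a direct inspection of the Dynkin diagram of $D_{2n}$ shows that no such pair of nodes is adjacent. Second, I must show that any further relation among the $h_{S_i}$'s modulo $T^2$ is respected; the key one is
\[
h_{S_1}(t)h_{S_2}(t)\equiv(t,t)^{n-1}h_{S_3}(t)\pmod{T^2},
\]
obtained by expanding the left side and collapsing the $n-1$ squares $h_\alpha(t)^2=(t,t)h_\alpha(t^2)$ coming from $\alpha\in S_1\cap S_2=\{1,3,\ldots,2n-3\}$. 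Applying $\gamma^*$ to both sides reduces this to $\gamma(t)^{2n}=(t,t)^n$, which is immediate from $\gamma(t)^2=(t,t)$. The main anticipated obstacle is confirming that no hidden relation has been overlooked, but since Corollary~\ref{5.5} presents $Z_T/T^2$ completely via the listed generators, the verifications above are exhaustive.
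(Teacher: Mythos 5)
The paper itself offers no proof of Proposition~\ref{5.6}; it is left as a routine verification following Corollary~\ref{5.5} and the structure of $Z_T$ from \cite{Sav04}, Proposition 4.2. Your argument correctly supplies that verification, and it is complete. The decisive points are exactly the ones you isolate: (a) the formula $\gamma^*(\epsilon h_S(t))=\epsilon\gamma(t)^{|S|}$ is compatible with the multiplication law $h_S(t)h_S(u)=(t,u)^{|S|}h_S(tu)$ because raising the $\gamma$-cocycle to the $|S|$-th power inside the abelian group $\mu_4$ yields exactly the required Hilbert-symbol factor; (b) triviality on $T^2$ is forced and consistent since $\gamma$ factors through $F^\times/(F^\times)^2$; and (c) in type $D_{2n}$ the three $h_{S_i}$ commute (no adjacency between distinct $S_i$ nodes and $(\alpha^\vee,\alpha^\vee)=2$ kills the diagonal contributions) and the single dependency
$$h_{S_1}(t)\,h_{S_2}(t)\;\equiv\;(t,t)^{\,n-1}\,h_{S_3}(t)\pmod{T^2}$$
coming from the $n-1$ overlaps $S_1\cap S_2=\{1,3,\dots,2n-3\}$ is respected, because $\gamma(t)^{2n}=(t,t)^n$ follows from $\gamma(t)^2=(t,t)$. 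The only place where one should be a bit more explicit than you are is the final sentence: Corollary~\ref{5.5} lists generators of $Z_T/T^2$ but not an explicit presentation, so to assert exhaustiveness one should note that the only coincidences among products $\prod_i h_{S_i}(t_i)$ modulo $\mu_2 T^2$ occur when $t_1\equiv t_2\equiv t_3\bmod (F^\times)^2$ (this follows by projecting to $\underline T/\pi(T^2)$ and using that $\omega_{S_1}+\omega_{S_2}+\omega_{S_3}\in 2Y$ is the unique relation), which is precisely the case your displayed relation covers. With that small elaboration your proof is airtight.
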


\begin{prop}
The Weyl group $W$ fixes $Z_T/T^2$ pointwisely.  
\end{prop}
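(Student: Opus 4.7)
The plan is to verify the statement directly on the explicit generators of $Z_T/T^2$ produced by Corollary 5.5. When $\Phi$ is of type $A_{2n}$, $E_6$, or $E_8$, the quotient is trivial and there is nothing to prove. The central $\mu_2$ summand is fixed pointwise by $W$ automatically, since $\mu_2$ lies in the center of $G$. It therefore suffices to show, for each admissible subset $S$ of vertices of the Dynkin diagram (the single $S$ in types $A_{2n-1}, D_{2n-1}, E_7$, or each of the three $S_i$ in type $D_{2n}$) and each simple reflection $w_\gamma$, that
$w_\gamma h_S(t) w_\gamma^{-1} \equiv h_S(t) \pmod{T^2}$.

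Distributing the conjugation over the defining product $h_S(t) = \prod_{\alpha \in S} h_\alpha(t)$, two cases arise. If $\gamma \in S$, the ``no two adjacent'' property of $S$ forces every $\alpha \in S \setminus \{\gamma\}$ to be orthogonal to $\gamma$, so conjugation acts trivially on those factors, while the remaining factor yields $h_{-\gamma}(t) = h_\gamma(t^{-1})$. A direct computation using $h_\gamma(a) h_\gamma(b) = (a,b) h_\gamma(ab)$ shows that $h_\gamma(t^{-1}) h_\gamma(t)^{-1} = h_\gamma(t^{-2})$, which lies in $T^2$. If $\gamma \notin S$, roots $\alpha \in S$ not adjacent to $\gamma$ again contribute unchanged factors, while each $\alpha \in S$ adjacent to $\gamma$ produces $w_\gamma h_\alpha(t) w_\gamma^{-1} = h_{\alpha+\gamma}(t)$, which by Lemma 2.3 (in the simply-laced case $n_\beta = 1$) rewrites as $h_\alpha(t) h_\gamma(t)$ times a Hilbert symbol lying in $\mu_2$. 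Since the number of such adjacent $\alpha$ is $2m$, an even integer by the defining property of $S$, the accumulated extra factor is $h_\gamma(t)^{2m}$, which lies in $T^2$ by iterated use of $h_\gamma(t)^2 = (t,t) h_\gamma(t^2)$, and the Hilbert symbols pair up over the even index set.

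The main obstacle is the careful bookkeeping of the $\mu_2$-valued cocycles that arise from Lemma 2.3 and from the quadratic identity $h_\gamma(t)^2 = (t,t) h_\gamma(t^2)$; the parity condition in the definition of admissible $S$, namely that each $\gamma \notin S$ is adjacent to an even number of vertices of $S$, is exactly what forces these cocycles to cancel modulo $T^2$, so that the net effect of $w_\gamma$ on the class of $h_S(t)$ in $Z_T/T^2$ is trivial.
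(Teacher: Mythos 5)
Your overall plan is the same as the paper's: reduce to the generators $h_S(t)$, split by whether the simple root $\gamma$ lies in $S$, use the ``no two adjacent'' property in the first case, and use the ``even adjacency'' property together with Lemma~\ref{2.3} in the second. The case $\gamma\in S$ is handled correctly. The problem is in the bookkeeping of $\mu_2$-valued factors for $\gamma\notin S$, and these matter because $\mu_2\cap T^2=1$, so $\mu_2$ survives in $Z_T/T^2$.

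Concretely, there are three imprecisions that together leave a genuine gap. First, $w_\gamma(1)h_\alpha(t)w_\gamma(-1)$ is \emph{not} $h_{\alpha+\gamma}(t)$; by the proof of Lemma~\ref{2.3} it is $(c(\gamma,\alpha),t)\,h_{\alpha+\gamma}(t)$, and this extra factor must be carried along. Second, the claim that $h_\gamma(t)^{2m}$ ``lies in $T^2$'' is false: iterating $h_\gamma(t)^2=(t,t)h_\gamma(t^2)$ gives $h_\gamma(t)^{2m}=(t,t)^m h_\gamma(t^{2m})$, so modulo $T^2$ one has the residual sign $(t,t)^m\in\mu_2$, which for $m$ odd (the case $2m=2$ that actually occurs in an ADE diagram) is nontrivial in general. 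Third, ``the Hilbert symbols pair up over the even index set'' does not describe what actually cancels. The Hilbert symbol coming from conjugation combines with the one from Lemma~\ref{2.3} \emph{per adjacent root} via the identity $c(\gamma,\alpha)c(\alpha,\gamma)=-1$ and the quadratic reciprocity $(t,-t)_2=1$; and the leftover $(t,t)^m$ from $h_\gamma(t)^{2m}$ is then cancelled not by a Hilbert symbol but by the commutator signs $[h_\gamma(t),h_\alpha(t)]=(t,t)$ picked up when reordering the noncommuting $h$'s. The paper's proof tracks exactly these three sources of signs and shows they cancel, landing on $h_S(t)\cdot h_\gamma(t^2)$ on the nose; your write-up would need to be tightened to the same level of care before the conclusion is actually established.
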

\begin{proof}
It is enough to show that $W$ fixes $h_S(t)$ for any $S$ and $t\in F^{\times}/(F^{\times})^2$, that is, $$w_{\alpha}(1)h_S(t)w_{\alpha}(-1)=h_S(t)$$ $\forall \alpha\in\Delta, t\in F^{\times}/(F^{\times})^2$. By the definition of $S$, if $\alpha$ does not belong to $S$, then it connects to an even number of nodes in $S$, if the number is zero, we are done; if not, then it must connect to two nodes in $S$, say $\beta,\gamma$. It suffices to check that
$$w_{\alpha}(1)h_{\beta}(t)h_{\gamma}(t)w_{\alpha}(-1)=h_{\beta}(t)h_{\gamma}(t)$$ The LHS is 
$$(t,c(\alpha,\beta))h_{\alpha+\beta}(t)(t,c(\alpha,\gamma))h_{\beta+\gamma}(t)=(t,c(\alpha,\beta)c(\alpha,\gamma))h_{\alpha+\beta}(t)h_{\alpha+\gamma}(t)$$
$$=(t,c(\alpha,\beta)c(\alpha,\gamma))(t,c(\beta,\alpha)t)h_{\beta}(t)h_{\alpha}(t)(t,c(\alpha,\gamma)t)h_{\alpha}(t)h_{\gamma}(t)$$
$$=(t,c(\alpha,\beta)c(\beta,\alpha))(t,t)h_{\beta}(t)h_{\gamma}(t)=(t,-1)(t,t)h_{\beta}(t)h_{\gamma}(t)=h_{\beta}(t)h_{\gamma}(t)$$

Finally, if $\alpha\in S$, then since $$w_{\alpha}(1)h_{\alpha}(t)w_{\alpha}(-1)=h_{\alpha}(t^{-1})=h_{\alpha}(t)$$ and $\alpha$ is not adjacent to any element in $S$, $w_{\alpha}(1)$ fixes $h_S(t)$.
\end{proof}

For a number field $k$, we take a system $\{\psi_v\}_v$ of additive characters of $k_v$ for all places $v$ of $k$, and we let $\gamma_{v}=\gamma_{\psi_v}$ be the Weil index attached to the quadratic form $x^2-y^2$ and $\psi_v$.

\begin{lemma}[\cite{Wei64}]
$\gamma_v$ is unramified for almost all $v$ and $$\prod_v \gamma_v(x)=1$$ $\forall x \in k^{\times}$.
\end{lemma}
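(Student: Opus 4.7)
The plan is to combine a local unramified computation with Hilbert reciprocity and a global Poisson summation argument.

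I would first handle the local ``unramified almost everywhere'' claim. Choose the $\psi_v$ so that they arise as the local components of a single global additive character $\psi:\mathbb{A}_k/k\to S^1$; then for almost every finite $v$ the conductor of $\psi_v$ equals $r_v$ and the residue characteristic of $k_v$ is odd. At any such $v$, a direct Gauss-sum computation evaluates the normalized Weil index $\gamma_v(u)$ on units $u\in r_v^\times$ and yields $\gamma_v(u)=1$. In particular, for every fixed $x\in k^\times$ one has $\gamma_v(x)=1$ at all but finitely many $v$, so the product $\prod_v\gamma_v(x)$ makes sense as a finite product.

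Next, set $\Gamma(x):=\prod_v\gamma_v(x)$ for $x\in k^\times$. Multiplying the local cocycle identity $\gamma_v(a)\gamma_v(b)=(a,b)_v\gamma_v(ab)$ over all $v$ and invoking Hilbert reciprocity $\prod_v(a,b)_v=1$ gives
\[
\Gamma(a)\Gamma(b)=\Gamma(ab)\cdot \prod_v(a,b)_v=\Gamma(ab),
\]
so $\Gamma:k^\times\to\mu_\infty$ is an honest character, with image contained in $\mu_8$ by the usual local bound on each $\gamma_v$.

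The main obstacle is proving $\Gamma\equiv 1$. The conceptual route is via Weil's oscillator representation: the Fourier transform attached to $\psi_v$ gives a projective representation of $\SL_2(k_v)$ on $L^2(k_v)$ whose $2$-cocycle restricted to the diagonal torus is precisely $\gamma_v$. Poisson summation together with the self-duality of $k\subset\mathbb{A}_k$ implies that the global projective representation on $L^2(\mathbb{A}_k)$ admits a canonical genuine lift to $\SL_2(k)$; evaluating this lift on $\diag(a,a^{-1})$ for $a\in k^\times$ returns $\Gamma(a)\cdot\mathrm{id}$, forcing $\Gamma(a)=1$. The hard step is the compatibility bookkeeping: one must verify that the tensor product of local $2$-cocycles (built from local Fourier transforms and local Haar measures) agrees with the global $2$-cocycle constructed from the Fourier transform on $\mathbb{A}_k$ using a self-dual Haar measure. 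Once this normalization is pinned down---which is essentially the content of Weil's paper---the lemma follows at once.
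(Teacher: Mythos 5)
The paper states this lemma without proof, citing \cite{Wei64}, and your sketch is a faithful reconstruction of Weil's own argument: unramifiedness almost everywhere via the local Gauss-sum evaluation at places of odd residue characteristic where $\psi_v$ has conductor $r_v$, and the product formula via the Poisson-summation/theta-functional splitting of the global metaplectic cocycle over $\mathrm{SL}_2(k)$. Your intermediate observation that Hilbert reciprocity makes $\Gamma$ a character is a correct consistency check, though once the oscillator-representation argument shows the theta-invariant lift of the rational torus agrees with the $\Gamma$-twisted one, $\Gamma\equiv 1$ follows directly and the homomorphism property is not separately needed.
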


Let $\gamma_v^*$ be the genuine central character associated to $\gamma_v$, we form the product
$$\gamma_{\mathbb A}^*:=\prod_v \gamma_v^*$$ which defines a character of the center $Z_T(\mathbb A_k)$ of $T(\mathbb A_k)$. 

\begin{lemma}
$$\underline T(k) \cap Z_T(\mathbb A_k)$$ is generated by $$\prod_{\alpha\in \Delta} h_{\alpha}(t_{\alpha}^2)$$ and 
$$h_S(t)=\prod_{\alpha\in S} h_{\alpha}(t)$$ where $t_{\alpha},t \in k^{\times}$.
\end{lemma}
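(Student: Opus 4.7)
The plan is to compute $\underline{T}(k)\cap Z_T(\mathbb{A}_k)$ directly through the free presentation of $\underline{T}(k)$ coming from simply-connectedness, and to match the answer against the claimed generators. First, since $\underline{G}$ is simply-connected the cocharacter lattice $Y$ is freely generated by the simple coroots, so every element of $\underline{T}(k)=Y\otimes k^{\times}$ can be written uniquely as $g=\prod_{\alpha\in\Delta}h_{\alpha}(t_{\alpha})$ with $t_{\alpha}\in k^{\times}$. Reading $g$ inside $T(k_v)$ at each place $v$ and using the bilinear commutator formula $[h(y_1\otimes s_1),h(y_2\otimes s_2)]=(s_1,s_2)_v^{(y_1,y_2)}$, the requirement that $g$ commute with every $h_{\beta}(s)$ for $s\in k_v^{\times}$ reduces, via non-degeneracy of the Hilbert symbol, to $\prod_{\alpha}t_{\alpha}^{C_{\alpha\beta}}\in(k_v^{\times})^2$ for every $\beta\in\Delta$, where $C_{\alpha\beta}=(\alpha^{\vee},\beta^{\vee})$ is the Cartan integer. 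Thus $g\in Z_T(\mathbb{A}_k)$ if and only if $\prod_{\alpha}t_{\alpha}^{C_{\alpha\beta}}$ is a square in $k_v^{\times}$ for every $v$ and every $\beta$.

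Next I would use the fact that for $n=2$ we are outside the Grunwald-Wang exceptional case, so the injectivity $k^{\times}/(k^{\times})^2\hookrightarrow\prod_v k_v^{\times}/(k_v^{\times})^2$ converts this into a single global condition: $\prod_{\alpha}t_{\alpha}^{C_{\alpha\beta}}\in(k^{\times})^2$ for each simple root $\beta$. Passing to the $\mathbb{F}_2$-vector space $V=(k^{\times}/(k^{\times})^2)^{\Delta}$, the class $(\bar t_{\alpha})$ lies in the kernel of the coordinate-wise action of $C\bmod 2$. Because $k^{\times}/(k^{\times})^2$ is a free $\mathbb{F}_2$-module, this kernel coincides with $K\otimes_{\mathbb{F}_2}(k^{\times}/(k^{\times})^2)$, where $K=\ker(C\bmod 2)\subset\mathbb{F}_2^{\Delta}$.

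By Proposition 5.4 and the analysis preceding Corollary 5.5, the $\mathbb{F}_2$-space $K$ is spanned precisely by the indicator vectors $\chi_{S_i}$ of the special subsets $S_i$ classified there. Writing $(\bar t_{\alpha})=\sum_i\chi_{S_i}\otimes\bar u_i$ therefore produces $u_i\in k^{\times}$ and $s_{\alpha}\in k^{\times}$ such that $t_{\alpha}=s_{\alpha}^2\cdot\prod_{i:\alpha\in S_i}u_i$. Substituting back into $g$ and using the relations $h_{\alpha}(ab)=(a,b)_v^{-1}h_{\alpha}(a)h_{\alpha}(b)$, the vanishing $(a^2,b)_v=1$, and the global product formula $\prod_v(a,b)_v=1$ for $a,b\in k^{\times}$, one expresses $g$ as a product of the claimed generators $\prod_{\alpha}h_{\alpha}(s_{\alpha}^2)$ and $h_{S_i}(u_i)$. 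The reverse containment, that each such generator lies in $\underline{T}(k)\cap Z_T(\mathbb{A}_k)$, is immediate from Proposition 5.4 applied locally together with $T^2\subset Z_T$.

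The main obstacle I foresee is the last step: while the decomposition of $g$ modulo $T^2$ is automatic from the preceding linear algebra, lifting it to an exact equality inside $T(\mathbb{A}_k)$ requires careful bookkeeping of the $\mu_2$-valued cocycle terms that arise when factors $h_{\alpha}(s_{\alpha}^2)\cdot h_{\alpha}(u)$ are combined into $h_{\alpha}(s_{\alpha}^2 u)$, and verifying that all residual scalars collapse to $1$ using $(a^2,b)_v=1$ and global Hilbert reciprocity. The remainder is a translation of the local classification of $Z_T/T^2$ (Corollary 5.5) to the global rational-points setting via Grunwald-Wang for squares.
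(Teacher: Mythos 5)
Your proof is correct and takes essentially the same route as the paper: characterize $Z_T(k_v)$ locally via $\ker(C\bmod 2)$ (equivalently Corollary 5.5) and then invoke the local-global principle for squares. The paper's version is much terser; your explicit $\mathbb{F}_2$-linear-algebra step and the observation that the final Hilbert-symbol bookkeeping collapses because the rational sections $h_\alpha(\cdot)$ multiply as honest cocharacters via global reciprocity simply fill in the details the paper leaves implicit.
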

\begin{proof}
Any element in the intersection is generated by the elements 
$$\prod_{\alpha\in \Delta} h_{\alpha}(u_{\alpha})$$
$$\prod_{\alpha\in S} h_{\alpha}(t_{\alpha})$$
such that $u_{\alpha},t_{\alpha} \in k^{\times}$, $u_{\alpha}$ is a square in $k_v$ for all $v$ and $\alpha\in\Delta$, and $t_{\alpha},t_{\beta}$ differ by a square in $k_v$ for all $\alpha,\beta\in S$ and places $v$. But local-global principal implies that any element of $k$ that is a square in each $k_v$ must be a square in $k$, proves the lemma.  
\end{proof}

\begin{cor}\label{5.10}
$\gamma_{\mathbb A}^*: Z_T(\mathbb A_k) \to \mu_4$ is trivial on $\underline T(k) \cap Z_T(\mathbb A_k)$, hence is a genuine Weyl group invariant ACC.
\end{cor}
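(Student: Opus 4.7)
The plan is to verify three claims for $\gamma^*_{\mathbb A} = \prod_v \gamma_v^*$: it is a well-defined genuine character of $Z_T(\mathbb A_k)$, it is Weyl group invariant, and it is trivial on the rational intersection. The first two are routine given the preceding machinery. Well-definedness as a restricted tensor product follows because Proposition 5.6 produces a local genuine character $\gamma_v^*$ at each place, while Lemma 5.8 tells us that $\gamma_v$ is unramified at almost all $v$, so almost every $\gamma_v^*$ is trivial on the intersection of $Z_T$ with the maximal compact. Weyl invariance is immediate from Proposition 5.7 applied placewise.

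The substance is triviality on $\underline T(k) \cap Z_T(\mathbb A_k)$, and for this I would invoke Lemma 5.9 to reduce the check to two families of generators. On generators of the form $h_S(t)$ with $t \in k^\times$, the local formula of Proposition 5.6 gives
\[
\gamma^*_{\mathbb A}(h_S(t)) \;=\; \prod_v \gamma_v^*(h_S(t)) \;=\; \prod_v \gamma_v(t)^{|S|} \;=\; \Big(\prod_v \gamma_v(t)\Big)^{|S|},
\]
which equals $1$ by the Weil reciprocity statement in Lemma 5.8. On square generators $\prod_{\alpha \in \Delta} h_\alpha(t_\alpha^2)$, the key observation is that $\gamma_v$ factors through $k_v^\times/(k_v^\times)^2$ with $\gamma_v(1) = 1$, so the Weil-index portion of $\gamma_v^*$ applied to $h_\alpha(t_\alpha^2)$ collapses to $1$, leaving only Hilbert-symbol cocycle corrections of the form $(t_\alpha,t_\alpha)_v = (-1, t_\alpha)_v$. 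Global Hilbert reciprocity $\prod_v (a,b)_v = 1$ for $a,b \in k^\times$ then forces the product over $v$ to be $1$, and multiplying over $\alpha$ finishes this case.

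The main technical obstacle will be pinning down the value of $\gamma_v^*$ on genuine elements of $T_v^2$ such as $h_\alpha(t_\alpha^2)$ for $\alpha$ not lying in any admissible $S$, since the formula in Proposition 5.6 is prescribed on coset representatives of $Z_T/T^2$ rather than on all of $Z_T$. To handle this, I would extend $\gamma_v^*$ using the group law in $Z_T$ together with the identity $h_\alpha(t)h_\alpha(u) = (t,u)_n h_\alpha(tu)$, verify that any two extensions differ only by a product of local Hilbert symbols in the entries $t_\alpha \in k^\times$, and then invoke Hilbert reciprocity to kill the ambiguity globally. Once this normalization is in place, the corollary reduces to the combination of Weil and Hilbert reciprocity applied to the explicit generators furnished by Lemma 5.9.
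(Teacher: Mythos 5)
Your overall route matches the intended argument: reduce via Lemma 5.9 to the two families of generators, apply the Weil reciprocity of Lemma 5.8 to handle $h_S(t)$ via $\gamma^*_{\mathbb A}(h_S(t)) = \big(\prod_v \gamma_v(t)\big)^{|S|} = 1$, and cite Proposition 5.7 for Weyl invariance and the unramifiedness statement in Lemma 5.8 for the remaining conditions in the definition of a genuine ACC. That is exactly what the paper does.

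The ``main technical obstacle'' you raise, however, is not an obstacle, and the resolution you sketch misreads the setup. Proposition 5.6 defines $\gamma^*_v$ as a genuine character of $Z_T$ that factors through the quotient $Z_T/T^2$; the formula $\gamma^*(\epsilon h_S(t)) = \epsilon\gamma(t)^{|S|}$ is a formula on generators of that quotient, and the character is (implicitly but unambiguously) trivial on $T^2$. This is also forced by the logic of the section: Proposition 5.7 only asserts that $W$ fixes $Z_T/T^2$ pointwise, so the Weyl invariance you want to use is available precisely because $\gamma^*_v$ kills $T^2$. Consequently $\gamma^*_v\!\left(\prod_{\alpha\in\Delta} h_\alpha(t_\alpha^2)\right) = 1$ at every place, and the adelic product is trivially $1$ on the square generators; no Hilbert reciprocity is needed there. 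Moreover, the ``Hilbert-symbol cocycle corrections'' you propose to track do not arise: $h_\alpha(t_\alpha)$ is not in general an element of $Z_T$ (for $\alpha^\vee$ adjacent to some $\beta^\vee$ one has $(\alpha^\vee,\beta^\vee)=\pm 1 \notin 2\mathbb Z$), so one cannot evaluate $\gamma^*_v$ on it to produce the $(t_\alpha,t_\alpha)_v$ terms, and there is no ambiguity that needs to be ``killed globally.'' Your detour would coincidentally still give $1$, but it is built on a misconception rather than on what Proposition 5.6 actually provides. The only global reciprocity the corollary needs is Weil's, applied to the $h_S(t)$ generators.
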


\begin{remark}
By the proof of \ref{5.2}, the assumptions of \ref{4.4} are also met if we take the principal series representation to be one induced from the irreducible genuine representation of $T(\mathbb A_k)$ associated to $\gamma_{\mathbb A}^*$.
\end{remark}

\subsection{Two-fold cover of $GL_n$}
Consider the homomorphism $i: GL_n(F) \to SL_{n+1}(F)$ given by mapping $A \in GL_n(F)$ to $\begin{pmatrix}
A & 0 \\ 0 & det(A)^{-1}.  
\end{pmatrix}$
Let $\widetilde{SL}_{n+1}(F)$ denote the nontrivial two-fold central extension of $SL_{n+1}(F)$, and let $\widetilde{GL}_{n}(F)$ denote its pullback, which is a nontrivial two-fold central extension of $GL_{n}(F)$. Because the standard maximal torus of $GL_{n}(F)$ is isomorphic to the standard maximal torus of $SL_{n+1}(F)$ under the map $i$, their pullbacks are also isomorphic. So we may identify the covering torus of $\widetilde{GL}_{n}(F)$ with the covering torus of $\widetilde{SL}_{n+1}(F)$, which is generated by $h_i(t)=h_{\alpha_i}(t)$ for $1\leq i \leq n$ and $t \in F^{\times}$. By \ref{5.5}, $Z_T(F)$ is generated by $\mu_2$ and $h_i(t^2)$ for $1\leq i \leq n$, $t \in F^{\times}$ when $n$ is odd, and is generated by $h_i(t^2)$ and $h_{odd}(t)=h_1(t)h_3(t)\cdots h_n(t)$ for $1\leq i \leq n$, $t \in F^{\times}$ when $n$ is even. Globally, we may define $\widetilde{GL}_{n}$ over the adeles and its covering torus in the same fashion. 

\begin{prop}
There exists a Weyl group invariant genuine character $$\gamma^*:Z_T(\mathbb A_k)/T^2(\mathbb A_k) \to \mu_4$$ of the center of the covering torus of $\widetilde{GL}_{n}(\mathbb A_k)$. Explicitly, $\gamma^*$ is given by the isomorphism $Z_T(\mathbb A_k)/T^2(\mathbb A_k) \cong \mu_2$ when $n$ is odd, and is given by $$\gamma^*(\epsilon h_{odd}(t))=\epsilon \gamma(t)^{(n+1)/2}$$ when $n$ is even.  
\end{prop}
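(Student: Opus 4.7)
The plan is to mirror the construction of Section 5.2: define a local character of $Z_{T_v}/T_v^2$ at each place $v$ using the Weil index attached to $x^2-y^2$ and an additive character, form the restricted tensor product over all places, and verify its automorphy using Weil's reciprocity law for the quadratic Hilbert symbol.

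At each place $v$, I would fix an additive character $\psi_v$ of $k_v$ and let $\gamma_v=\gamma_{\psi_v}$ denote the associated Weil index. In the case where $Z_{T_v}/T_v^2\cong\mu_2$, take $\gamma_v^*$ to be the unique nontrivial genuine character. In the other case $Z_{T_v}/T_v^2$ is generated by $\mu_2$ together with the $h_{odd}(t)$ elements, and I would put
$$
\gamma_v^*\bigl(\epsilon\, h_{odd}(t)\bigr)=\epsilon\,\gamma_v(t)^{(n+1)/2},
$$
where the exponent $(n+1)/2$ equals the cardinality $|S|$ of the unique admissible subset in the relevant type-$A$ Dynkin diagram. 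The bimultiplicativity $\gamma_v(t)\gamma_v(u)=(t,u)\gamma_v(tu)$, combined with the cocycle relation $h_{odd}(t)h_{odd}(u)=(t,u)^{|S|}h_{odd}(tu)$ (inherited from Corollary~\ref{5.5}), shows that $\gamma_v^*$ is a well-defined genuine character of $Z_{T_v}/T_v^2$, exactly as in Proposition~\ref{5.6}.

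Local Weyl-invariance I would establish by repeating the simple-root computation already carried out in Section 5.2 for the $D$-type case: if $\alpha$ lies outside the support of $h_{odd}$, then it connects to either zero or two of the factors, and the two-neighbor calculation using the Chevalley cocycle $c(\alpha,\beta)$ gives $w_\alpha(1)h_{odd}(t)w_\alpha(-1)\equiv h_{odd}(t)\pmod{T^2}$; if $\alpha$ is itself one of the factors, then $w_\alpha(1)h_\alpha(t)w_\alpha(-1)=h_\alpha(t^{-1})\equiv h_\alpha(t)\pmod{T^2}$. Either way $\gamma_v^*$ is Weyl-invariant.

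Finally, I would form $\gamma^*_{\mathbb A}=\prod_v\gamma_v^*$, which is well-defined on $Z_T(\mathbb A_k)$ because $\gamma_v$ is unramified at almost all $v$ by Weil's theorem. The main point—and the only substantive obstacle—is automorphy: the triviality of $\gamma^*_{\mathbb A}$ on $\underline T(k)\cap Z_T(\mathbb A_k)$. Adapting the lemma preceding Corollary~\ref{5.10} to the present setting, this intersection is generated by global squares $\prod_\alpha h_\alpha(t_\alpha^2)$ with $t_\alpha\in k^\times$ and by rational elements $h_{odd}(t)$ with $t\in k^\times$. Squares are annihilated because the Weil index is trivial on squares, and for a rational $h_{odd}(t)$ we use Weil's reciprocity $\prod_v\gamma_v(t)=1$ to conclude
$$
\gamma^*_{\mathbb A}\bigl(h_{odd}(t)\bigr)=\prod_v\gamma_v(t)^{(n+1)/2}=\Bigl(\prod_v\gamma_v(t)\Bigr)^{(n+1)/2}=1.
$$
This produces the desired genuine, Weyl-invariant character of $Z_T(\mathbb A_k)/T^2(\mathbb A_k)$ with the stated explicit formula.
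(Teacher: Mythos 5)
Your proof is correct, but it takes a genuinely different (and longer) route than the paper. The paper's proof is a pullback argument: it invokes Corollary~\ref{5.10} to obtain a Weyl-invariant genuine ACC for the covering torus of $\widetilde{SL}_{n+1}(\mathbb{A}_k)$, pulls it back along the embedding $i: GL_n \to SL_{n+1}$ (which identifies the two covering tori), and observes that $W(GL_n)$ sits inside $W(SL_{n+1})$ as the stabilizer of the last torus coordinate — so invariance under the larger Weyl group automatically gives invariance under the smaller one. The explicit formula then drops out of Proposition~\ref{5.6}. You instead rebuild the character from scratch by running the Section 5.2 machinery again for this case: local Weil-index characters, a local Weyl-invariance computation with the Chevalley cocycle, the restricted tensor product, and Weil's reciprocity $\prod_v \gamma_v(t)=1$ for automorphy. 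Each of these steps is correct and mirrors what was already done for the simply-laced simply-connected case, so the argument goes through — but it repeats work that the pullback approach recycles for free. What your approach buys is self-containment and transparency about the role of the Weil index; what the paper's approach buys is brevity and a clean exposure of the structural relationship between $\widetilde{GL}_n$ and $\widetilde{SL}_{n+1}$ that motivates the whole subsection. If you keep your version, you should at least state the Weyl-group inclusion $W(GL_n)\hookrightarrow W(SL_{n+1})$ explicitly, since your local invariance computation is really a check against the $SL_{n+1}$ simple reflections $w_{\alpha_1},\dots,w_{\alpha_{n-1}}$, which under $i$ are exactly the generators of $W(GL_n)$.
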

\begin{proof}
By \ref{5.10}, there is a Weyl group invariant genuine ACC of the covering torus of $\widetilde{SL}_{n+1}(\mathbb A_k)$ where Weyl group refers to the Weyl group of $SL_{n+1}$. Pulling back through $i$, we get a genuine ACC of the covering torus of $\widetilde{GL}_{n}(\mathbb A_k)$. Because the Weyl group of $GL_n$ may be identified with the subgroup of the Weyl group of $SL_{n+1}$ that leaves the last entry of the torus invariant, the character is invariant under the action of the Weyl group of $GL_n$. The second part follows directly from \ref{5.6}.   
\end{proof}

\section{Appendix}

The results in this appendix are due to Gordan Savin.

\subsection{Introduction}
Many of Harish-Chandra's results on intertwining operators and reducibility 
of induced representations are contained in his work on the Plancherel measure 
for reductive groups. 
However, since Harish-Chandra works with algebraic groups, his results are not 
applicable to non-linear central extensions of reductive groups. A purpose of this note 
is to provide simple proofs of some of those results in a manner applicable to 
non-linear groups. 

\subsection{Notation} 
Let $k$ be a $p$-adic field with residual field of order $q$.  Fix a uniformizing element 
$\varpi$ and an absolute value on $k$ such that $|\varphi|=1/q$. 

Let $G$ be a reductive group defined over $k$. 
Let $A$ be a maximal split torus and $\Delta$ a set of simple roots for 
the corresponding root system. For every $\Theta\subseteq \Delta$ we have a standard 
parabolic subgroup $P_{\Theta}=M_{\Theta}N_{\Theta}$ (or simply $P=MN$ if 
$\Theta$ is fixed). In particular, $P_{\emptyset}$ is the minimal parabolic subgroup, while 
maximal parabolic subgroups correspond to $\Theta$ such that $\Delta\setminus \Theta$ 
has one simple root. 
Let $A_{\Theta}\subseteq A$ be the connected 
component of the intersection of kernels of all $\alpha\in\Theta$. Then $A_{\Theta}$ is a 
split torus contained in the center of $M_{\Theta}$ and 
$A_{\Theta}\cong X_{\Theta}\otimes k^{\times}$ where $X_{\Theta}$ is the co-character lattice. 
Let $W$ be the Weyl group  and $W_{\Theta}\subseteq W$ the subgroup generated by 
simple reflections in $\Theta$. Let $w^{\ell}$ and $w^{\ell}_{\Theta}$ be the longest elements in 
$W$ and $W_{\Theta}$ respectively. Let ${w_{\Theta}}=w^{\ell}w^{\ell}_{\Theta}$. Then 
$\bar{\Theta}={w_{\Theta}}(\Theta)\subseteq \Delta$ and $P_{\bar{\Theta}}$ (or simply $\bar P$) is the
associated parabolic subgroup. 

Now assume that $P_{\Theta}$ is maximal and $P_{\Theta}={P_{\bar\Theta}}$. 
Then $w_{\Theta}$ is of order $2$ and  normalizes $M_{\Theta}$. Let $Y_{\Theta}\cong \bbZ$ be the 
lattice of characters of $M_{\Theta}$ trivial on the center of $G$. Fix a generator 
$\chi$ of this group of characters. Then $\chi^{w_{\Theta}}=\chi^{-1}$.  The image of natural pairing 
between $X_{\Theta}$ and $Y_{\Theta}$ is $m\bbZ$, for some positive integer $m$. 
Let $a_{\Theta}\in A_{\Theta}$ be such that $\chi(a_{\Theta})=\varpi^m$. 

\subsection{Induced representations}

Let $P=MN$ be a maximal parabolic subgroup corresponding to $\Theta\subset\Delta$. 
Let $\delta_N$ be the modular character. 
Let $(\pi,E)$ be a supercuspidal representation of $M$. Let $I_P^G(\pi)$ be the (normalized)
induced representation. It consists of locally constant functions $f: G\rightarrow E$ 
such that $f(nmg)=\delta_N^{1/2}(m)\pi(m)f(g)$ for all $n\in N$, $m\in M$ and $g\in G$.   
   
If $\bar{P}\neq P$ then $I_P^G(\pi)_N\cong \delta_N^{1/2}\pi$ (the Jacquet functor) and 
$I_P^G(\pi)$ is always irreducible. Henceforth we assume that $\bar{P}=P$. For example, 
this is true for every maximal parabolic in $Sp_{2n}(k)$. Using the Bruhat decomposition 
$G=\cup_{w\in W_{\Theta}\backslash W/W_{\Theta}} PwP$ define a filtration of 
$I_P^G(\pi)$ by $P$-invariant subspaces  
\[ 
J(\pi) \subseteq J'(\pi) \subseteq I_P^G(\pi)
\] 
 where $J(\pi)$ is subspace consisting of functions in $I_P^G(\pi)$  
 supported on $Pw_{\Theta}P=Pw_{\Theta}N$, and $J'(\pi)$ is the subspace of functions 
supported on $G\setminus P$. Then $J(\pi)_N\cong \delta_N^{1/2}\pi^w$ by integrating over $w_{\Theta}N$,  
$(J'(\pi)/J(\pi))_N=0$, and $(I_P^G(\pi)/J'(\pi))_N\cong \delta_N^{1/2}\pi$ by evaluating functions at the identity of $G$.  
 
 The representation $I_P^G(\pi)$ sits in a family of induced representations. More 
 precisely, for every  $s\in\mathbb C$ let $(\pi_{s},E)$ be the twist of $\pi$ by $|\chi|^s$.
   This representation depends only on $q^{s}\in \bbC^{\times}$. In this way we have 
 an algebraic family $\mathcal F$ of cuspidal representations
 of $M$. From now on, rather than fixing one particular member of $\mathcal F$,
 $\pi$ will usually denote any member of $\mathcal F$. However, if $\pi$ is fixed, 
 then we can identify $\mathcal F$ with $\bbC^{\times}$ or with $\bbC$, if it is more convenient. 

 Let $K$ be a special maximal compact subgroup of $G$. 
Since $G=PK$, by restricting functions in $I_P^G(\pi)$ to $K$, we can identify
all induced representations in $\mathcal F$ with $X=\Ind^{K}_{K\cap P}(E)$. 
In particular, an element $f\in X$  defines a map $\pi\mapsto
f(\pi)\in I_P^G(\pi)$ for all $\pi\in \mathcal F$. We shall call this map a 
constant section. A regular section is a linear combination $\sum a_{i}f_{i}$ where 
$a_{i}$ are regular functions on $\mathcal F$, and $f_{i}$ are constant sections. 
If $f$ is a constant section such that 
 $f(\pi)$ is in $J(\pi)$ (or in $J'(\pi)$) for 
one $\pi\in \mathcal F$, then $f(\pi)$ is in $J(\pi)$ (or in $J'(\pi)$, respectively) for
all $\pi$. Thus we have a corresponding filtration of constant sections
\[ 
Y\subseteq Y'\subseteq X.
\]

 We have a rational family of intertwining operators 
 $A(\pi):I_P^G(\pi) \rightarrow I_P^G(\pi^{w_{\Theta}})$ 
 traditionally defined (on an open set in $\mathcal F$) by the integral
 \[ 
 A(\pi)f(g)=\int_N f(w_{\Theta}ng) ~dn. 
 \] 
 More precisely, there exists a rational function $a$ on $\mathcal F$ such that for every 
 constant section $f\in X$, $\pi\mapsto a(\pi)\cdot A(\pi)(f)$ is a regular section. 
Via the Frobenius reciprocity, the intertwining operator $A(\pi)$ corresponds to
a $P$-intertwining map 
\[ 
\ell(\pi): I_P^G(\pi) \rightarrow \delta_N^{1/2}\pi^{w_{\Theta}}
\] 
 which, when restricted to $J(\pi)$, is given by the integral over ${w_{\Theta}}N$ and 
 is therefore non-zero. We view $\ell$ as a rational map $\pi\mapsto \ell(\pi)\in \Hom_{\bbC}(X,E)$. 
  The analytic behaviour of $A$ and $\ell$ is the same, in particular: 
\begin{enumerate} 
\item For every 
constant section $f\in X$ the map $\pi\mapsto a(\pi)\cdot \ell(\pi)(f)\in E$ 
is regular. 
\item  For every constant section $f\in Y$
the map $\pi\mapsto  \ell(\pi)(f)\in E$ is regular. 
\item  For every $\pi$ 
there is $f\in Y$ such that $\ell(\pi)(f)\neq 0$. 
\end{enumerate}  

\begin{lemma}\label{L1} We have: 
\begin{enumerate} 
\item For every constant section $f\in Y'$ the map 
$\pi\mapsto \ell(\pi)(f)\in E$ is regular. 
\item  
If $\pi$ is not isomorphic to $\pi^{w_{\Theta}}$ (which is always true if $\pi$ is not
 unitary) then  for every constant section $f\in X$, the map 
$\pi\mapsto  \ell(\pi)(f)\in E$ is regular at $\pi$. 
\end{enumerate} 
\end{lemma}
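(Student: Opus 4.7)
The plan is to prove both parts by the same device: if $\ell(\pi)(f)$ has a pole of order $m \geq 1$ at some $\pi_0$, then its leading Laurent coefficient is a nonzero $P$-intertwining map from a specific subquotient of $I_P^G(\pi_0)$ into $\delta_N^{1/2}\pi_0^{w_{\Theta}}$, which I will rule out via the Bruhat filtration of the Jacquet module together with Frobenius reciprocity and Schur's lemma.

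As setup, I would first observe that the filtration $Y \subseteq Y' \subseteq X$ is cut out by support conditions on $K$ that are independent of $\pi$, so evaluation identifies it fibrewise with the filtration $J(\pi_0) \subseteq J'(\pi_0) \subseteq I_P^G(\pi_0)$ and respects the $P$-module structures. Property (i) bounds the pole orders of $\ell(\pi)(f)$ at $\pi_0$ uniformly in $f \in X$ by the order of vanishing of $a$ at $\pi_0$, so on any $P$-stable family of constant sections the leading Laurent coefficient at $\pi_0$ exists and, being a limit of $P$-intertwiners, is itself a $P$-intertwiner into $\delta_N^{1/2}\pi_0^{w_{\Theta}}$.

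For (1), I would assume for contradiction that some $f_0 \in Y'$ has $\ell(\pi)(f_0)$ singular at $\pi_0$, let $m \geq 1$ be the maximum pole order of $\ell(\pi)$ over $Y'$ at $\pi_0$, and form the leading coefficient $\ell_0(f) := \lim_{\pi \to \pi_0}(\pi - \pi_0)^m \ell(\pi)(f)$ for $f \in Y'$. Since $m \geq 1$, property (ii) forces $\ell_0|_{Y} = 0$, so $\ell_0$ descends to a $P$-intertwiner $J'(\pi_0)/J(\pi_0) \to \delta_N^{1/2}\pi_0^{w_{\Theta}}$. The hypothesis $(J'(\pi_0)/J(\pi_0))_N = 0$ then yields, via Frobenius reciprocity, $\Hom_P(J'(\pi_0)/J(\pi_0), \delta_N^{1/2}\pi_0^{w_{\Theta}}) = 0$, forcing $\ell_0 = 0$ and contradicting $\ell_0(f_0) \neq 0$. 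For (2), I would run exactly the same argument on all of $X$: the leading coefficient descends to $I_P^G(\pi_0)/J(\pi_0)$, whose Jacquet module is $\delta_N^{1/2}\pi_0$ (combining $(J'(\pi_0)/J(\pi_0))_N = 0$ with $(I_P^G(\pi_0)/J'(\pi_0))_N = \delta_N^{1/2}\pi_0$), so Frobenius and Schur give $\Hom_P(I_P^G(\pi_0)/J(\pi_0), \delta_N^{1/2}\pi_0^{w_{\Theta}}) \cong \Hom_M(\pi_0, \pi_0^{w_{\Theta}}) = 0$ under the assumption $\pi_0 \not\cong \pi_0^{w_{\Theta}}$.

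The main technical point will be verifying that leading Laurent coefficients of the rational family $\pi \mapsto \ell(\pi)$ of $P$-intertwiners are themselves $P$-intertwiners at the limit point. This comes down to the fact that both the $P$-action on $I_P^G(\pi) \cong X$ and the operator $\ell(\pi)$ depend rationally on $\pi$, so the intertwining identity passes to leading coefficients; importantly, this uses no algebraic-group structure on the parameter space $\mathcal F$, which is precisely what makes the proof apply uniformly in the non-linear covering setting of the paper.
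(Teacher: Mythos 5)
Your proof is correct and follows essentially the same route as the paper's: assume a maximal-order pole, extract the leading Laurent coefficient (a nonzero $P$-intertwiner since the intertwining identity depends rationally on $\pi$), show it descends to a quotient in the Bruhat filtration, and rule it out via the Jacquet module and the adjunction $\Hom_P(V,\delta_N^{1/2}\pi_0^{w_\Theta})\cong\Hom_M(V_N,\delta_N^{1/2}\pi_0^{w_\Theta})$ together with Schur's lemma. The only cosmetic difference is that in part (2) you descend to $I_P^G(\pi_0)/J(\pi_0)$ and compute its Jacquet module from the exactness of the Jacquet functor, whereas the paper (implicitly invoking part (1)) descends directly to $I_P^G(\pi_0)/J'(\pi_0)$; both roads end at $\Hom_M(\pi_0,\pi_0^{w_\Theta})=0$.
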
 
\begin{proof} 
1)  Assume that $\ell$, when applied to all sections in $Y'$, has a pole of 
(maximal) order $n$ at $\pi$. Using $\pi$ as a basepoint, 
we identify the family $\mathcal F$ with $\bbC$. Then $s^n \ell$ defines
 a non-zero $P$-intertwining map from $J'(\pi)/J(\pi)$ to $\delta_N^{1/2}\pi^{w_{\Theta}}$. But there is no
 such map since $(J'(\pi)/J(\pi))_N=0$. This is a contradiction. 2) Similarly
if $\ell$, when applied to all sections in $X$, has a pole of order $n$ at $\pi$, then
$s^n \ell$ defines a non-zero $P$-intertwining map from $I_P^G(\pi)/J'(\pi)$ to
$\delta_N^{1/2}\pi^{w_{\Theta}}$. Since $(I_P^G(\pi)/J'(\pi))_N\cong \delta_N^{1/2}\pi$, we obtain an
isomorphism of $\pi$ and $\pi^{w_{\Theta}}$. This is a contradiction. The lemma is proved.
\end{proof} 

 Since $I_P^G(\pi)$ is irreducible for a generic $\pi\in \mathcal F$, 
 the composition $A(\pi^{w_{\Theta}})\circ A(\pi)$ is a necessarily equal to 
 $\mu^{-1}(\pi) \cdot 1_{X}$ for a 
 rational function $\mu(\pi)$, called Plancherel measure. 
The following is contained in Casselman's notes: 
\begin{prop} We have: 
\begin{enumerate}
 \item If $\pi$ is not unitary, then $I_P^G(\pi)$ is reducible iff $\mu^{-1}(\pi)=0$.
 \item If $\pi$ is unitary and $\pi$ is not isomorphic to $\pi^{w_{\Theta}}$ then
 $I_P^G(\pi)$ is irreducible.
\end{enumerate}
\end{prop}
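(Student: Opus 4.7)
The plan is to combine three ingredients: the Plancherel identity $A(\pi^{w_{\Theta}})\circ A(\pi)=\mu^{-1}(\pi)\cdot \mathrm{id}$, the $P$-stable filtration $J(\pi)\subseteq J'(\pi)\subseteq I_P^G(\pi)$ with its computed Jacquet subquotients, and second adjointness, which, since $\bar P=P$, identifies $\Hom_G(I_P^G(\pi),\tau)$ with $\Hom_M(\pi,r_P(\tau))$. The opening move, common to both parts, is to show $\pi\not\cong\pi^{w_{\Theta}}$: this is hypothesized in (2), while for (1) writing $\pi=\pi_0\otimes|\chi|^s$ with $\pi_0$ unitary gives $\pi^{w_{\Theta}}=\pi_0^{w_{\Theta}}\otimes|\chi|^{-s}$, so $\pi\cong\pi^{w_{\Theta}}$ would force $\mathrm{Re}(s)=0$, i.e.\ $\pi$ unitary. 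Under $\pi\not\cong\pi^{w_{\Theta}}$ the two Jacquet subquotients $\delta_N^{1/2}\pi^{w_{\Theta}}$ and $\delta_N^{1/2}\pi$ are non-isomorphic, the associated sequence splits canonically, $r_P(I_P^G(\pi))\cong\pi\oplus\pi^{w_{\Theta}}$, and second adjointness gives $\mathrm{End}_G(I_P^G(\pi))=\Hom_M(\pi,\pi\oplus\pi^{w_{\Theta}})=\mathbb{C}$.

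Part (2) follows immediately: $\pi$ unitary makes $V:=I_P^G(\pi)$ admissible and unitarizable, hence semisimple, and any decomposition $V=V_1\oplus V_2$ with both summands nonzero would yield a non-scalar projection endomorphism, contradicting $\mathrm{End}_G(V)=\mathbb{C}$. For the forward direction of (1), if $\mu^{-1}(\pi)=0$ then $A(\pi^{w_{\Theta}})\circ A(\pi)=0$ while Lemma~\ref{L1} together with the non-vanishing of $\ell(\pi)$ on $J(\pi)$ recorded earlier imply that neither factor is zero, so the image of $A(\pi^{w_{\Theta}}):I_P^G(\pi^{w_{\Theta}})\to V$ is a nonzero proper $G$-submodule of $V$, witnessing reducibility.

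The hard part is the converse in (1): given $\mu^{-1}(\pi)\neq 0$, show $V$ is irreducible. Here $A(\pi):V\to V':=I_P^G(\pi^{w_{\Theta}})$ is a $G$-isomorphism. The main obstacle is that $\mathrm{End}_G(V)=\mathbb{C}$ alone is not sufficient, since it is compatible with a non-split extension of two non-isomorphic irreducibles. To rule this out I would argue as follows. Since $r_P(V)$ has length two and every constituent of $V$ has nonzero Jacquet module, $V$ has length at most two, and $\mathrm{End}_G(V)=\mathbb{C}$ forces $V$ to be indecomposable with non-isomorphic constituents. Assuming $V$ reducible, let $\tau_1$ be its unique irreducible submodule and $\tau_2=V/\tau_1\not\cong \tau_1$. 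Any nonzero $V\to\tau_1$ would be surjective, with nonzero kernel (else $V\cong\tau_1$ would contradict reducibility), hence containing the socle $\tau_1$, forcing a nonzero factorization $\tau_2\to\tau_1$ which cannot exist. Therefore $\Hom_G(V,\tau_1)=0$, so $\Hom_M(\pi,r_P(\tau_1))=0$ by second adjointness, and combined with the exact sequence $0\to r_P(\tau_1)\to \pi\oplus \pi^{w_{\Theta}}\to r_P(\tau_2)\to 0$ this pins down $r_P(\tau_1)=\pi^{w_{\Theta}}$. The identical reasoning applied to $V'$ yields $r_P(\tau_1')=\pi$ for its socle $\tau_1'$, so $\tau_1\not\cong\tau_1'$; but the $G$-isomorphism $A(\pi)$ must carry socle to socle, forcing $\tau_1\cong\tau_1'$, the sought contradiction.
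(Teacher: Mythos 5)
Your argument is correct, and it rests on the same two pillars as the paper's proof (the multiplicity-one structure of the Jacquet module together with $\pi\not\cong\pi^{w_{\Theta}}$), but the mechanics are appreciably different. The paper uses Frobenius reciprocity (first adjointness) to directly identify the Jacquet modules of the unique irreducible submodule $V'$ and the quotient $V''=I_P^G(\pi)/V'$ as $\delta_N^{1/2}\pi$ and $\delta_N^{1/2}\pi^{w_{\Theta}}$ respectively, then shows $V''$ embeds in and $V'$ quotients off $I_P^G(\pi^{w_{\Theta}})$, and reads off $A(\pi^{w_{\Theta}})\circ A(\pi)=0$ directly; part (2) is dispatched in one line by observing that a unitary representation with a unique submodule and a unique, non-isomorphic quotient cannot be semisimple. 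You instead compute $\mathrm{End}_G(I_P^G(\pi))=\mathbb{C}$ via second adjointness (a fact the paper never invokes, and which for covering groups requires its own justification along Bernstein's lines) and deduce part (2) from semisimplicity plus Schur; and for the converse in (1) you run a contradiction argument, using that $\mu^{-1}(\pi)\neq 0$ would make $A(\pi)$ a $G$-isomorphism carrying socle to socle, whereas the two socles have non-isomorphic Jacquet modules $\pi^{w_{\Theta}}$ and $\pi$. This socle-matching trick is a legitimate and slightly slicker alternative to the paper's direct computation that $A(\pi^{w_{\Theta}})\circ A(\pi)=0$.

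Two small points. First, your claim that $r_P(I_P^G(\pi))\cong\pi\oplus\pi^{w_{\Theta}}$ "splits canonically" because the two pieces are non-isomorphic is not automatic: non-isomorphic supercuspidals of $M$ can in principle have nontrivial $\mathrm{Ext}^1$; here the extension does split (they sit at distinct points of the relevant Bernstein component), but you should say why, or better, note that the splitting is actually irrelevant to your argument, since $\Hom_M(\pi,r_P(\tau_1))=0$ already pins $r_P(\tau_1)=\pi^{w_{\Theta}}$ from the exact sequence without assuming it splits. Second, in the forward direction of (1), concluding that the image of $A(\pi^{w_{\Theta}})$ is a \emph{proper} submodule of $I_P^G(\pi)$ uses that $A(\pi)\circ A(\pi^{w_{\Theta}})$ is also zero, i.e.\ the symmetry $\mu^{-1}(\pi)=\mu^{-1}(\pi^{w_{\Theta}})$ or an equivalent length-comparison; worth a sentence, though the paper is equally terse at this step.
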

\begin{proof}
1) Assume that  $I_P^G(\pi)$ reduces. We claim that $I_P^G(\pi)$ has a non-split
composition series of length 2.
 Let $V'$ be an irreducible submodule. Then,
by the Frobenius reciprocity, $\delta_N^{1/2}\pi$ must be a quotient of $V'$. Since
$\pi$ is not isomorphic to $\pi^w$, $\delta_N^{1/2}\pi$  appears with
multiplicity one in $I_P^G(\pi)_N$.  This shows that $V'$ is unique submodule,
$V'_N\cong \delta_N^{1/2}\pi$ and  $V''_N\cong \delta_N^{1/2}\pi^w$ where
 $V''=I_P^G(\pi)/V'$.
It follows, from the Frobenius reciprocity, that $V''$ is a submodule of $I_P^G(\pi^w)$ and $V'$ is a
quotient. Thus $A(\pi^w)\circ A(\pi)=0$. Conversely,
since $A(\pi)$ and $A(\pi^w)$ are defined and non-zero, $\mu^{-1}(\pi)=0$ must imply that
$I_P^G(\pi)$ and $I_P^G(\pi^w)$ both reduce.
2) If $I_P^G(\pi)$ is reducible then, arguing as in 1), it has a unique submodule and a
unique quotient. But $I_P^G(\pi)$ is unitary and thus semisimple. This is a contradiction.
\end{proof} 

Assume that $\pi$ is unitary and that $\pi$ is isomorphic to $\pi^{w_{\Theta}}$.  Let $t_w : \pi^{w_{\Theta}}\rightarrow \pi$ be a map intertwining the two actions of $M$. 
Let $\pi_s=\pi\otimes |\chi |^s$, where $s\in \bbC$. Let $I(s)$ be the induced representation of $G$.  We have an intertwining map 
 $A(s):I_P^G(s) \rightarrow I_P^G(-s)$ defined by the integral
 \[ 
 A(s)f(g)=t_w \int_N f(w_{\Theta}ng) ~dn. 
 \] 
Let $\mu^{-1}(s)$ be the function obtained by composing $A(s)$ and $A(-s)$. The following is not contained in Casselman's notes: 
 
\begin{prop}\label{6.3} Assume that $\pi$ is unitary and that $\pi$ is isomorphic to $\pi^{w_{\Theta}}$. Then 
the following three are equivalent: 
\begin{enumerate}
 \item $I_P^G(\pi)$ is irreducible.
 \item The exact sequence 
\[
0\rightarrow \delta_N^{1/2}\pi \rightarrow I_P^G(\pi)_N \rightarrow \delta_N^{1/2}\pi \rightarrow 0
\] 
 does not split.  
  \item The function $\mu^{-1}(s)$ has a pole,  
 necessarily of order 2, at $s=0$. 
\end{enumerate}
\end{prop}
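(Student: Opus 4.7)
The plan is to establish $(1) \Leftrightarrow (2)$ using Frobenius reciprocity and semisimplicity, and then $(2) \Leftrightarrow (3)$ via a careful analysis of the residue of the intertwining operator $A(s)$ at $s = 0$.

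First, for $(1) \Leftrightarrow (2)$, I would apply Frobenius reciprocity to get $\mathrm{End}_G(I_P^G(\pi)) \cong \Hom_M(I_P^G(\pi)_N, \delta_N^{1/2}\pi)$. Using the given exact sequence and $\pi \cong \pi^{w_\Theta}$, this $\Hom$ space has dimension $1$ when the sequence does not split (only the projection onto the quotient yields a nonzero $M$-intertwiner to $\delta_N^{1/2}\pi$) and dimension $2$ when it splits. Since $\pi$ is unitary, $I_P^G(\pi)$ is unitary and hence semisimple, so irreducibility is equivalent to its commutant being one-dimensional, giving the equivalence.

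Next, for $(2) \Leftrightarrow (3)$, expand $A(s) = s^{-1} R + A_0 + O(s)$ at $s = 0$; differentiating the identity $A(s)\pi_s(g) = \pi_{-s}(g)A(s)$ and reading off the $s^{-1}$ coefficient yields $R \in \mathrm{End}_G(I_P^G(\pi))$. By the argument of Lemma~\ref{L1}(1), applied now to the graded piece $X/Y' \cong \delta_N^{1/2}\pi$ instead of $Y'/Y$, the pole of $A$ at $s = 0$ has order at most $\dim\Hom_M(\delta_N^{1/2}\pi, \delta_N^{1/2}\pi^{w_\Theta}) = 1$. In the non-split case, $I_P^G(\pi)$ is irreducible, so $R$ is a scalar by Schur; if $R = 0$ then $A$ is regular at $s=0$ with $A(0) = \lambda \cdot \mathrm{id}$, giving $\ell(0)(f) = \lambda f(e) = 0$ for $f \in J(\pi)$, which contradicts the fact that $\ell(0)$ restricted to $J(\pi)$ equals the nontrivial integration over $w_\Theta N$. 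Hence $R = c'\cdot\mathrm{id}$ with $c' \neq 0$, and then
\[ \mu^{-1}(s)\cdot\mathrm{id} = A(-s)\circ A(s) = -s^{-2}(c')^2\cdot\mathrm{id} + O(s^{-1}), \]
giving a pole of order exactly $2$ at $s=0$. Conversely, in the split case, $\Hom_M(I_P^G(\pi)_N, \delta_N^{1/2}\pi^{w_\Theta})$ is $2$-dimensional, which accommodates a regular $\ell(0)$ whose restriction to $J(\pi)_N$ agrees with the integration map; hence $A(s)$ is regular at $s=0$, $R = 0$, and $\mu^{-1}$ is regular at $s=0$.

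The main obstacle is the dual bookkeeping in the $(2)\Leftrightarrow(3)$ step: the "at most simple" upper bound on the pole of $A$ uses the one-dimensionality of $\Hom_M(\delta_N^{1/2}\pi, \delta_N^{1/2}\pi^{w_\Theta})$ coming from irreducibility of $\pi$ and $\pi \cong \pi^{w_\Theta}$, while showing the residue is actually nonzero in the non-split case requires the incompatibility between the only available intertwiner (the quotient map, which kills $J(\pi)_N$) and the non-vanishing integration $J(\pi)_N \cong \delta_N^{1/2}\pi^{w_\Theta}$. Obtaining "exactly of order $2$" rather than just "at most $2$" then relies on $R$ being a nonzero scalar, so that $R^2 \neq 0$ and the leading $s^{-2}$ coefficient in the product $A(-s)\circ A(s)$ survives.
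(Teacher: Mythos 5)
Your proof of $(1)\Leftrightarrow(2)$ matches the paper: Frobenius reciprocity gives $\mathrm{End}_G(I_P^G(\pi))\cong\Hom_M(I_P^G(\pi)_N,\delta_N^{1/2}\pi)$, and unitarity reduces irreducibility to one-dimensionality of this commutant. The computation showing that a simple pole of $A(s)$ with nonzero scalar residue $R$ forces a double pole of $\mu^{-1}(s)$ is also fine. The problem is the step you identify yourself as the ``main obstacle'': the claim that the pole of $A$ at $s=0$ has order at most one.

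The argument of Lemma~\ref{L1} does \emph{not} give this bound when applied to the graded piece $X/Y'$. In the two cases treated in Lemma~\ref{L1}, the relevant $\Hom$-space vanishes --- $(J'(\pi)/J(\pi))_N=0$ for part (1), and $\Hom_M(\delta_N^{1/2}\pi,\delta_N^{1/2}\pi^{w_\Theta})=0$ when $\pi\not\cong\pi^{w_\Theta}$ for part (2) --- so a pole of order $n\geq 1$ produces a nonzero map into a zero space, a contradiction. In your situation $\Hom_M(\delta_N^{1/2}\pi,\delta_N^{1/2}\pi^{w_\Theta})$ is one-dimensional, so the argument only shows that the leading Laurent coefficient $\ell_{-n}$ (the value of $s^n\ell$ at $s=0$) is a nonzero $P$-map factoring through $I/J'$; it places no bound on $n$. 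The lower Laurent coefficients $\ell_{-(n-1)},\dots,\ell_{-1}$ are not $P$-maps for the fixed $\pi$, so the dimension of that $\Hom$-space does not control them. A toy linear-algebra example: $\ell(s)(x,y)=x+s^{-2}y$ on $\bbC^2$ is regular on $\bbC\times 0$ and maps to a one-dimensional target, but has a pole of order $2$. Similarly, the converse direction (split $\Rightarrow$ $A$ regular $\Rightarrow$ $\mu^{-1}$ regular) is asserted rather than proved: that the $2$-dimensional $\Hom$-space ``accommodates'' a regular $\ell(0)$ does not show that $\ell(s)$ actually converges to it.

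The paper closes both gaps with a translation trick using the central element $a_\Theta$. Given a constant section $f_s$, the modified section
\[
f_s'=f_s-\epsilon^{-1}(a_\Theta)\delta_N^{-1/2}(a_\Theta)|\chi(a_\Theta)|^{-s}\,\pi_s(a_\Theta)(f_s)
\]
vanishes at the identity, hence lies in $J'(\pi_s)$, so $\ell(f_s')$ is regular by Lemma~\ref{L1}. For small $s\neq 0$ one computes $\ell(f_s')=(1-q^{2ms})\ell(f_s)$, whence $\ell(f_s)=\ell(f_s')/(1-q^{2ms})$ has at most a simple pole, with the pole occurring iff $\ell(\pi)(f_0')\neq 0$ for some $f$. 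In the split case one extends $\ell(\pi)|_{J'(\pi)}$ to $\tilde\ell(\pi)$ on all of $I_P^G(\pi)$ and finds $\ell(\pi)(f_0')=(1-q^0)\tilde\ell(\pi)(f_0)=0$, so $\ell$ is regular. Without this (or an equivalent) device, your bound on the pole order, and therefore the claim ``$R=0\Rightarrow A$ regular,'' is unsupported, and the split direction is also unproved.
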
 
\begin{proof}
 Since $I_P^G(\pi)$ is unitary, it is irreducible if and only if
  $\Hom_G(I_P^G(\pi), I_P^G(\pi))\cong \bbC$. 
By the Frobenius reciprocity, $\Hom_G(I_P^G(\pi), I_P^G(\pi) )\cong 
\Hom_M(I_P^G(\pi),  \delta_N^{1/2}\pi)$. Thus $I_P^G(\pi)$ is irreducible if and only
 if $I_P^G(\pi)_N$ does not split. 
This proves the equivalence of 1) and 2). 
 If $\ell$ is regular at $\pi$ then it defines a splitting of the exact sequence, 
 and $I_P^G(\pi)$ reduces. In order to finish, we need to show that if $\mu^{-1}$ has a pole at 
$\pi$, then it is of order $2$ and $I_P^G(\pi)_N$ does not split.

 Let $a_{\Theta}$ be the element in the center of $M$, defined in the previous section. Let $\epsilon$ be the central character of $\pi$.  
 Let $s\mapsto f_{s}\in I_P^G(\pi_s)$ be a constant section. 
 Then $$f_{s}'=f_{s}-\epsilon^{-1}(a_{\Theta})
 \delta_{N}^{-1/2}(a_{\Theta})|\chi(a_{\Theta})|^{-s}\cdot \pi_{s}(a_{\Theta})(f_{s})$$ 
is a regular section such that $f'_{s}\in J'(\pi_s)$ for all $s$. In particular, by Lemma \ref{L1},  
  $s\mapsto\ell(f_{s}')$ is regular. If $s\neq 0$ but $|s|$ is sufficiently small, then $\ell$ is 
 defined on the whole $I_P^G(\pi_s)$, hence 
 \[ 
 \ell(f_{s}')=\ell(f_{s})-q^{2ms}\ell(f_{s})=(1-q^{2ms})\ell(f_{s}). 
 \] 
 Solving for $\ell(f_{s})$ we see that $\ell$ can have the pole of order at most one at $s=0$, 
 and that happens if and only if $\ell(\pi)(f_{0}')\neq 0$ for some constant  
 section $f_{s}$. Now we can finish 
 easily. Assume that $\ell$ has a pole at $s=0$. We want to show that $I_P^G(\pi)_N$ does not split. 
If it splits, then $\ell(\pi)$ extends to    
 $I_P^G(\pi)$. Let $\tilde \ell(\pi)$ be an extension. Then,  
 for every constant section $f_s$, the above computation shows $\ell(\pi)(f_{0}')= (1-q^0)\tilde\ell(\pi)(f_{0})=0$, 
a contradiction. 
Thus we have seen that the intertwining operator $A$ can have a pole of order at most one at 
$I_P^G(\pi)$, and if it does, then $I_P^G(\pi)$ is irreducible. It follows that the residue of $A$ 
acts on $I_P^G(\pi)$ as multiplication by a non-zero scalar, so $\mu^{-1}$ has a double pole.
\end{proof}

\begin{cor} Assume that $\pi$ is unitary, $\pi$ is isomorphic to $\pi^{w_{\Theta}}$, 
 and $I_{P}^{G}(\pi)$ reduces 
(necessarily into two non-isomorphic summands). Then 
$\mu(0)^{-1}\neq 0$ and $A(0)$ acts on the two summands by different scalars 
$\pm \mu(0)^{-1/2}$.
\end{cor}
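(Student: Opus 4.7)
The plan is to combine the functional equation $A(s) A(-s) = \mu^{-1}(s) \cdot \mathrm{id}$ with the non-vanishing of $A$ on the big-cell piece $J(\pi)$, finished by a Jacquet module observation that rules out $A(0)$ being scalar.

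Since $I_P^G(\pi)$ reduces, the last paragraph of the proof of Proposition \ref{6.3} shows $A(s)$ is regular at $s=0$, so $A(0)$ is a well-defined self-intertwiner of $I_P^G(\pi)$ (identifying $I_P^G(\pi^{w_\Theta})$ with $I_P^G(\pi)$ via $t_w$). Specializing the functional equation gives $A(0)^2 = \mu^{-1}(0) \cdot \mathrm{id}$. Because $V^+ \not\cong V^-$, Schur's lemma forces $A(0)$ to act as scalars $\lambda_\pm$ on $V^\pm$, and the identity above gives $\lambda_\pm^2 = \mu^{-1}(0)$.

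For $\mu(0)^{-1} \neq 0$: property (3) preceding Lemma \ref{L1} provides a constant section $f \in Y$ with $\ell(0)(f) \neq 0$; equivalently, the integral formula yields $(A(0)f)(1) \neq 0$, so $A(0) \neq 0$. If $\mu^{-1}(0)$ vanished then both $\lambda_\pm$ would be zero and $A(0)$ would act as zero on $V^+ \oplus V^-$, contradicting $A(0) \neq 0$. Thus $\mu^{-1}(0) \neq 0$ and $\lambda_\pm = \pm \mu(0)^{-1/2}$ for some choice of square root and signs.

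To see the signs on $V^+$ and $V^-$ are opposite, it suffices to show $A(0)$ is not a scalar operator. The key observation is that $A(0)$ does not preserve the subspace $J'(\pi) \subset I_P^G(\pi)$ of functions supported on $G \setminus P$. Indeed, any function in $J'(\pi)$ vanishes at $g=1$ (since $1 \in P$), whereas the $f \in J(\pi) \subset J'(\pi)$ from the previous step satisfies $(A(0)f)(1) \neq 0$, so $A(0)f \notin J'(\pi)$. Passing to Jacquet modules and using $(J'/J)_N = 0$, which gives $J(\pi)_N = J'(\pi)_N$, the induced map $A(0)_N$ does not preserve the line $J(\pi)_N \subset I_P^G(\pi)_N$. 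A scalar operator preserves every subspace, so $A(0)$ cannot be scalar; hence $\lambda_+ \neq \lambda_-$ and consequently $\lambda_\pm = \pm \mu(0)^{-1/2}$ with opposite signs, as claimed. The main obstacle is the last step: converting the pointwise non-vanishing $(A(0)f)(1)\neq 0$ into non-preservation of $J(\pi)_N$ on the Jacquet module, which requires the Bruhat decomposition ($1 \notin P w_\Theta N$) together with the vanishing of the middle graded piece $(J'/J)_N$.
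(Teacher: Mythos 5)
Your argument is essentially the paper's argument, and it is correct modulo one detour that you should simply delete. The paper's own proof says $A(0)$ is well defined and non-zero, is not a scalar multiple of the identity because $\ell(\pi)$ does not factor through evaluation at $1$, and concludes from $A(0)^2=\mu^{-1}(0)$ that the two eigenvalues are non-zero and opposite. Your core chain of reasoning is the same: property (3) gives a section $f\in Y$ (i.e.\ $f(\pi)\in J(\pi)$) with $\ell(\pi)(f)=(A(0)f)(1)\neq 0$, while any $f\in J(\pi)\subset J'(\pi)$ satisfies $f(1)=0$; so $A(0)$ is not a scalar $\lambda$ (else $(A(0)f)(1)=\lambda f(1)=0$), and with $V^{+}\not\cong V^{-}$ and $A(0)^{2}=\mu^{-1}(0)\cdot\mathrm{id}$ you get $\lambda_{\pm}=\pm\mu(0)^{-1/2}$ non-zero.

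The Jacquet-module paragraph, however, is both unnecessary and not quite justified. Unnecessary: the statement ``$A(0)$ does not preserve $J'(\pi)$'' already implies $A(0)$ is not scalar, full stop --- a scalar preserves every subspace. Not justified: from $A(0)f\notin J'(\pi)$ you cannot conclude that the induced map on Jacquet modules fails to preserve $J(\pi)_N=J'(\pi)_N$; it could a priori happen that $A(0)f$ differs from an element of $J'(\pi)$ by something in the kernel of the Jacquet functor, so $\overline{A(0)f}\in J'(\pi)_N$ even though $A(0)f\notin J'(\pi)$. You flag this inference as ``the main obstacle,'' but in fact it is a step your argument never needs. Remove that paragraph and simply conclude ``$A(0)$ does not preserve $J'(\pi)$, hence is not scalar,'' and the proof is clean and matches the paper's.
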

\begin{proof} We know that $A(0)$ is well defined and non-zero since 
$\ell(\pi)$ is non-zero on $J(\pi)$. Let $\lambda_{1}$ and $\lambda_{2}$
be the eigenvalues of $A(0)$ on the two summands. Since  $\ell(\pi)$
is not given by evaluation of functions at identity, the operator $A(0)$
 is not a multiple of the identity operator. Thus $\lambda_{1}\neq \lambda_{2}$. Since 
 $A(0)^{2}=\mu(\pi)^{-1}$, the eigenvalues must be non-zero and of opposite signs. 
 \end{proof}

\noindent
{\bf Remark:} If $G'$ is a central extension of $G$ by a group of $n$-th roots of 1, then 
all of the above arguments go through provided that we replace $a_{\Theta}$ by an 
inverse image in $G'$ of  $a^{n}_{\Theta}$. (That element is clearly contained in the 
 in the center of $M'$.)


\begin{thebibliography}{20}
\bibitem{AB07}J. Adams, D. Barbasch, A. Paul, P. Trapa, and D. A. Vogan: \emph{Unitary Shimura correspondences for split real groups}, J.Amer.Math.Soc.\textbf{20}:3 (2007), 701-751.
\bibitem{GG}Wee Teck Gan and Fan Gao, \emph{The Langlands-Weissman Program for Brylinski-Deligne extensions}, arXiv:1409.4039 [math.NT]
\bibitem{Gao}Fan Gao: \emph{The Gindikin-Karpelevich Formula and Constant Terms of Eisenstein Series for Brylinski-Deligne Extensions}, arXiv:1410.7624 [math.NT]
\bibitem{GS16} David Goldberg, Dani Szpruch: \emph{Plancherel measures for coverings of p-adic SL(2,F)}, Int. J. Number Theory 12, 1907 (2016). 
\bibitem{KP84}D. A. Kazhdan and S. J. Patterson: \emph{Metaplectic forms}, Inst. Hautes Etudes Sci. Publ. Math., (59):35–142, 1984.
\bibitem{LS10}H. Y. Loke and G. Savin: \emph{Modular forms on nonlinear double covers of algebraic groups}, Transactions of the American Mathematical Society, Vol 362, Number 9, 2010, page 4901-4920.
\bibitem{Mc12}P. McNamara: \emph{Principal Series Representations of Metaplectic Groups Over Local Fields}, (pdf), arXiv. Multiple Dirichlet Series, L-functions and Automorphic Forms, Birkhauser Progress in Math. 300 (2012) 299--327. 
\bibitem{MW95}C. Moeglin and J.-L. Waldspurger: \emph{Spectral decomposition and Eisenstein series}, Cambridge University Press, 1995.
\bibitem{St67}R. Steinberg: \emph{Lectures on Chevalley groups}, Yale University, 1967.
\bibitem{Sav}G. Savin: \emph{Lectures on representations of $p$-adic groups}.
\bibitem{Sav04}G. Savin: \emph{On unramified representations of covering groups}, J. Reine Angew. Math.,566:111–134, 2004.
\bibitem{Tang17}S. Tang: \emph{Action of intertwining operators on pseudospherical $K$-types}, Pacific Journal of Mathematics, Vol.286, No.1, 2017.
\bibitem{Wei64}A. Weil: \emph{Sur certains groupes d'operateurs unitaires}, Acta mathematica, 1964.
\bibitem{Weis16}M. Weissman: \emph{Covers of tori over local and global fields}, preprint, 2016.


\end{thebibliography}
\end{document}